\documentclass[reqno,10pt]{amsart}

\usepackage[margin=0.9in]{geometry}

%\usepackage{lineno}
%\linenumbers

%\documentclass{article}
%%%%%%%%%%%%%%%%%%%%%%%%%%%%%%%%%%%%%%%%%%%%%%%%%%%%%%%%%%%%%%%%%%%%%%%%%%%%%%%%%%%%%%%%%%%%%%%%%%%%%%%%%%%%%%%%%%%%%%%%%%%%%%%%%%%%%%%%%%%%%%%%%%%%%%%%%%%%%%%%%%%%%%%%%%%%%%%%%%%%%%%%%%%%%%%%%%%%%%%%%%%%%%%%%%%%%%%%%%%%%%%%%%%%%%%%%%%%%%%%%%%%%%%%%%%%
\usepackage{amssymb,amsmath,amsthm}
\usepackage{amscd}
\usepackage{amsfonts}
\usepackage{amssymb}
\usepackage{latexsym}
\usepackage{xcolor}
\usepackage{esint}

\usepackage{hyperref}

\usepackage{graphicx}
\graphicspath{ {images/} }

\usepackage[makeroom]{cancel}

%\setlength{\textwidth}{6.00in}
%\setlength{\textheight}{9.00in}
%\setlength{\oddsidemargin}{0.3in}
%\setlength{\evensidemargin}{0in}
%\setlength{\topmargin}{-0.2in}
%\voffset-10mm

%\setcounter{MaxMatrixCols}{10}
%
%   \oddsidemargin   -0.34  in
%  \evensidemargin  -0.34  in
%  \textwidth      7.25 in
%  \headheight     -.20in
%   \topmargin      -0.2in
% \textheight = 9.49 in

%TCIDATA{OutputFilter=LATEX.DLL}
%TCIDATA{Version=5.00.0.2552}
%TCIDATA{<META NAME="SaveForMode" CONTENT="1">}
%TCIDATA{LastRevised=Tuesday, February 03, 2015 20:01:52}
%TCIDATA{<META NAME="GraphicsSave" CONTENT="32">}

\newtheorem{theorem}{Theorem}

\newtheorem{definition}{Definition}
\newtheorem{lemma}{Lemma}
\newtheorem{proposition}[theorem]{Proposition}
\newtheorem{remark}{Remark}
 \newtheorem*{theorem*}{Rough version of the Main theorem}

\let\e=\varepsilon

\let\p=\partial

\let\O=\Omega

\let\pll = \parallel

\numberwithin{equation}{section}

\let\hide\iffalse

\DeclareMathAlphabet{\mathpzc}{OT1}{pzc}{m}{it}

\newcommand{\R}{\mathbb{R}}

\newcommand{\be}{\begin{equation}}
\newcommand{\bm}{\begin{multline}}
\newcommand{\ee}{\end{equation}}
\newcommand{\dd}{\mathrm{d}}

\newcommand{\xb}{x_{\mathbf{b}}}
\newcommand{\tb}{t_{\mathbf{b}}}
\newcommand{\vb}{v_{\mathbf{b}}}

\newcommand{\xf}{x_{\mathbf{f}}}
\newcommand{\tf}{t_{\mathbf{f}}}
\newcommand{\vf}{v_{\mathbf{f}}}

\newcommand{\vbn}{v_{\mathbf{b}, 3}}
\newcommand{\vfn}{v_{\mathbf{f}, 3}}

\newcommand{\Bes}{\begin{eqnarray*}}
\newcommand{\Ees}{\end{eqnarray*}}
\newcommand{\Be}{\begin{equation} }
\newcommand{\Ee}{\end{equation}}
\newcommand{\Bs}{\begin{split}}

\newcommand{\A}{\mathcal{A}}

\newcommand{\vertiii}[1]{{\left\vert\kern-0.25ex\left\vert\kern-0.25ex\left\vert #1 
    \right\vert\kern-0.25ex\right\vert\kern-0.25ex\right\vert}}

\pagestyle{plain}  
 
% \numberwithin{equation}{subsection}
%\numberwithin{theorem}{section}

 \makeatletter
\def\munderbar#1{\underline{\sbox\tw@{$#1$}\dp\tw@\z@\box\tw@}}
\makeatother

\def\p{\partial}

\def\O{\Omega}
\def\R{\mathbb{R}}

\def\B{\begin{equation}}
\def\E{\end{equation}}
\def\BN{\begin{eqnarray*}}
\def\EN{\end{eqnarray*}}

\def\bcb{\begin{color}{blue}}
\def\ec{\end{color}}

\def\bcr{\begin{color}{red}}
\def\ec{\end{color}}

\begin{document}

\title{Boundary effect under 2D Newtonian gravity potential in the phase space}

\author{Jiaxin Jin}
\address{Department of Mathematics, The Ohio State University, Columbus, OH, 43210, USA, email: jin.1307@osu.edu}
\author{Chanwoo Kim}
\address{Department of Mathematics, University of Wisconsin-Madison, Madison, WI, 53706, USA, email: chanwoo.kim@wisc.edu, chanwookim.math@gmail.com}

\maketitle

 \begin{abstract}
 	We study linear two-half dimensional  Vlasov equations under the logarithmic gravity potential in the half space of diffuse reflection boundary. We prove decay-in-time of the exponential moments with a polynomial rate, which depends on the base logarithm.

 \end{abstract}

%%%%%%%%%%%%%%%%%%%%%%%%%%

%\tableofcontents

%\section*{Conflict of interest statement}On behalf of all authors, the corresponding author states that there is no conflict of interest. 

%\section*{Acknowledgment}This project is partly supported by NSF-CAREER 2047681, Brain Pool fellowship, and Simons fellowship.

%\newpage

\section{Introduction}

In this paper, we consider a free molecules without 
{\color{black} intermolecular}
interaction which are contained in 
%half spaces $\O$
%a horizontally-periodic three dimensional half-space $\O = \mathbb{T}^2 \times \R_{+}$ 
{\color{black}
a horizontally-periodic three dimensional half-space $\O = \mathbb{T}^2 \times \R_{+}$, }
and subjected to the gravity field. A governing kinetic model of the system is the Vlasov equations:
\Be \label{equation for F} 
\partial_t F + v \cdot \nabla_{x} F - \nabla \Phi (x) \cdot \nabla_{v} F = 0, \ \text{for} \   (t,x,v) \in \R_{+} \times \Omega \times \R^3.
\Ee
Here, $\Phi(x)$ is a given external field (gravity), which will be specified later in \eqref{field property}.
%Such equations are used in the kinetic model of solar wind.

At the bottom of domain, 
{\color{black}
the phase boundary $\gamma:=  \{ (x, v) \in \partial \Omega \times \mathbb{R}^3\}$ is decomposed into the outgoing boundary and incoming boundary $ \gamma_{\pm} := \{ (x, v) \in \partial \Omega \times \mathbb{R}^3, n(x) \cdot v \gtrless 0 \}$ with the outward normal $n(x)$ at $x \in \p\O$. It is clear that $|\p\O| = 1$.
Further, we consider
}
the molecules interact with the boundary thermodynamically via a \textit{diffusive reflection} boundary condition
\Be \label{diff_F}
F(\cdot, x, v)  
= \mu (x, v) \int_{n(x) \cdot v^1 >0} F(\cdot, x, v^1) \{ n(x) \cdot v^1 \} \dd v^1 \ \ \text{for} \ \  (x,v) \in \gamma_- : = \{ x \in \p\O \ \text{and} \    v_3 > 0\},
\Ee   
such that an outgoing distribution is proportional to the thermal equilibrium of the unit boundary temperature:
\Be \label{wall_M}
\mu (x, v) = \frac{1}{2 \pi}e^{-  \frac{|v|^2}{2}}
\ \ \ \ \ \ \ \ \ \ \ \   \text{(wall Maxwellian)}.
\Ee  
where
$\int_{n(x) \cdot v^1 >0} \mu (x, v^1) 
\{n(x) \cdot v^1\} \dd v^1 = 1$, and we have a null flux at the boundary and enjoy the conservation of total mass:
\Be
\iint_{\O \times \R^3} F(t,x,v) \dd x \dd v  = \iint_{\O \times \R^3} F(0,x,v) \dd x \dd v  = \mathfrak{m}>0.
\Ee 
Throughout this paper, we always assume that the total mass equals $\mathfrak{m}$.

If the boundary temperature varies with the position on the boundary, %though the system enjoys the conservation of total mass, 
then stationary solutions to \eqref{diff_F} are neither given by explicit formulas nor are equilibria (local Maxwellian) in general, if they exist (see \cite{JK2} for the construction of steady solutions). 
{\color{black}
This is because any explicit solution can be obtained by backtracking along the characteristics until the boundary.
Under the non-isothermal case when $\mu_{\theta} (x, v) = \frac{1}{2 \pi}e^{-  \frac{|v|^2}{2 \theta (x)}}$ and $\theta (x)$ varies with $x$, local Maxwellian doesn't satisfy the diffusive boundary condition in general.
}

In this paper, we only focus on the asymptotic stability of simpler isothermal boundary for the sake of simplicity. In this case of the isothermal boundary \eqref{wall_M}, a stationary solution has an explicit form: for some $c_\mathfrak{m}>0$
\Be
\tilde{\mu} (x, v) :=  \frac{c_{\mathfrak{m}}}{2 \pi}e^{- ( \frac{|v|^2}{2} + \Phi (x) ) }.
\Ee
The uniqueness of stationary problem can be easily proved as the problem is linear (see \cite{JK2} for the details). 

%This is left for the future work.

The main interest in this paper is to study stabilizing effect of the diffusive reflection boundary to the Vlasov equations under the logarithmic potential
\Be \label{field property_1}
\Phi (x) = \log_a (1 + x_3).
\Ee
This potential is physically relevant in the 2D universe. Indeed the logarithmic potential \eqref{field property_1} corresponds to the Newtonian potential in the 2-dimensional universe. A relevant model is the two-half dimensional Vlasov equation: 
%\Be \label{2.5}
%\p_t F + \sum_{i=1,3} v_i \p_{x_i} F - \sum_{j=1,2,3} \p_{x_j} \Phi (x) \p_{v_j} F = 0, 
%\Ee
%where the spatial domain is $\O = \mathbb{T} \times \R_{+}:=\{ (x_1, x_3) \in \mathbb T \times \mathbb R: x_3 >0 \}$. 
{\color{black}
\Be \label{2.5}
\p_t F + \sum_{i=1,3} v_i \p_{x_i} F - \p_{x_3} \Phi (x) \p_{v_3} F = 0, 
\Ee
where the spatial domain is $\mathbb{T} \times \R_{+}:=\{ (x_1, x_3) \in \mathbb T \times \mathbb R: x_3 >0 \}$. 
}

Our full 3 dimensional problem \eqref{equation for F} can directly apply to this two-half dimensional model \eqref{2.5} by setting data homogeneous in $x_2$-direction, that is, $F = F(t,x_1, x_3, v)$ and $F_0 = F_0 (x_1, x_3, v)$ in the spatial domain $\O = \mathbb{T} \times \R_{+}$ and the domain of the velocities is still $\R^3$. %Then  the second term in \eqref{equation for F} is degenerated into $(v_1, v_3) \cdot (\nabla_{x_1} F, \nabla_{x_3} F)$.  

%In that case, we will assume the $x_2$-axis still exists in the periodic form as before. After that, we can recover the characteristic line and stochastic cycles, and all main results in this paper. 
 
 %Such equations are used in the kinetic model of solar wind. 

%We remark that only isothermal boundary condition is considered in this paper. 

%\begin{remark}The logarithmic potential \eqref{field property} is physical in spatially 2 dimensional setting such as the two-half dimensional Vlasov equation: 
%	\Be
%	\p_t F + \sum_{i=1,3} v_i \p_{x_i} F - \sum_{j=1,2,3} \p_{x_j} \Phi (x) \p_{v_j} F = 0  , \label{2.5}
%	\Ee
%	where $\O = \mathbb{T} \times \R_{+}:=\{ (x_1, x_3) \in \R^2: x_3 >0 \}$. Our 3 dimensional result can directly apply to the two-half dimensional by setting data homogeneous in $x_2$-direction, that is, $F = F(t,x_1, x_3, v)$ and $F_0 = F_0 (x_1, x_3, v)$ in the spatial domain $\O = \mathbb{T} \times \R_{+}$ and the domain of the velocities is still $\R^3$. Then  the second term in \eqref{equation for F} is degenerated into $(v_1, v_3) \cdot (\nabla_{x_1} F, \nabla_{x_3} F)$.  
%	
%	In that case, we will assume the $x_2$-axis still exists in the periodic form as before. After that, we can recover the characteristic line and stochastic cycles, and all main results in this paper. 
%	
%\end{remark}

\bigskip

{\color{black}
\noindent \textbf{Notations.} Here we clarify some notations: 
$A \lesssim B$ if $A\leq C B$ for a constant $C >0$ which is independent on $A,B$;
$A \lesssim_\theta B$ if $A\leq C B$ for a constant $C=C(\theta)>0$ which depends on $\theta$ but is independent on $A,B$;
$\| \cdot \|_{L^1_{x,v}}$  for the norm of $L^1(\O \times \R^3)$; $\| \cdot \|_{L^\infty_{x,v}}$ or $\| \cdot \|_\infty$  for the norm of $L^\infty(\bar{\O} \times \R^3)$; $|g|_{L^1_{\gamma_\pm}}=\int_{\gamma_\pm} |g(x,v) | |n(x) \cdot v| \dd S_x \dd v$ where $\dd S_x = \dd x_1 \dd x_2$ represents the measure on the boundary $\p\O$ and $n(x)$ is the outward normal at $x \in \p\O$; an integration $\int_Y f(y)\dd y$ is often abbreviated to $\int_Y f$, if it is not ambiguous. 
Finally, we remark that $n$ represents an integer without $x \in \p\O$ (e.g. Proposition \ref{prop:mapV}).
}

\bigskip

\noindent \textbf{Main Theorems. } 
The main interest in this work is to study a long-time behavior of solutions to the Vlasov equations for the field as follows:
\Be \label{field property}
\Phi (x) = \log_a (1 + x_3),
\ \text{ and } \ 
\A = \left[\frac{1}{\ln (a)}\right] \geq 8,
\ee
where $[ m ]$ represents the biggest integer less than or equal to $m$. 
{\color{black}
Here we set $\A$ as the integer part of $1 / \ln (a)$ for the convenience of decay rates in main results (see Theorem \ref{theorem_1} and Theorem \ref{theorem}).
}

The gravitational potential in the logarithm form plays an important role to the convergence speed which turns out a polynomial rate depends on the base of the logarithm. 

We express the perturbation form as 
\Be \label{pert}
F(t,x,v)= \tilde{\mu} (x, v) + f (t,x,v),
\Ee
and the initial data $F_0(x,v)= \tilde{\mu} (x, v) + f_0 (x,v)$.

\medskip

Theorem \ref{theorem_1} shows $L^1$-estimates on every fluctuation which is of zero initial mass.

%Moreover, we find that the decay rate depends on the base number of logarithm function.
%Here we state two main theorems of this paper. 

\begin{theorem} \label{theorem_1} 
Consider the initial data $F_0 (x, v) = \tilde \mu(x,v) + f_0 (x,v) \geq 0$, such that 
\be	\label{f=0_mass}
\iint_{\O \times \R^3} f_0 (x,v) \dd x \dd v = 0, \ \
\| e^{ \frac{1}{2} |v|^2+ \Phi (x) } f_0\|_{L^\infty_{x,v}} < \infty.
\ee	
%	$\iint_{\O \times \R^3} f_0 (x,v) \dd x \dd v = 0$,
%	$\| e^{ \frac{1}{2} |v|^2+ \Phi (x) } f_0\|_{L^\infty_{x,v}} < \infty $. 
There exists a unique global-in-time solution 
	\Be	\label{def:f}
	F(t,x,v) =\tilde \mu(x,v) + f(t,x,v) \geq 0
	\Ee
	to \eqref{equation for F} and the boundary condition \eqref{diff_F} with the initial condition $F(t,x,v)|_{t=0} = F_0(x,v)$ in $\O \times \R^3$, such that
	\Be \label{cons_mass_f} 
	\iint_{\Omega \times \R^3} f (t, x, v) \dd x \dd v = 0, \ \ \text{for all } t\geq 0.
	\Ee
%Consider \eqref{field property} and the initial data $f_0 (x,v)$, such that 
%\be \label{f=0_mass}
%\iint_{\O \times \R^3} f_0 (x,v) \dd x \dd v = 0, \ \
%\| e^{ \frac{1}{2} |v|^2+ \Phi (x) } f_0\|_{L^\infty_{x,v}} < \infty.
%\ee 
%Then there exists a unique global-in-time solution $f(t,x,v)$ to \eqref{equation for F} and \eqref{diff_F}, such that
Moreover, we have 
%\Be \label{est:theorem_1}   
%\| f(t) \|_{L^1_{x,v}} 
%\lesssim 
%(\ln\langle t\rangle )^{\A - 6 - \frac{\delta}{2}} \langle t\rangle^{-( \A-6)} \times \| e^{ \frac{1}{2} |v|^2+ \Phi (x) } f_0\|_{L^\infty_{x,v}},
%\Ee
%where $0 < \delta < 1$ and $\A$ is given as in \eqref{field property}.
{\color{black}
\Be \label{est:theorem_1} 
\| f(t) \|_{L^1_{x,v}} 
\leq C (\ln\langle t\rangle )^{\A - 6 - \frac{\delta}{2}} \langle t\rangle^{-( \A-6)} \times \| e^{ \frac{1}{2} |v|^2+ \Phi (x) } f_0\|_{L^\infty_{x,v}},
\Ee
where $C = C (\O)$ only depends on the domain $\O$, $0 < \delta < 1$ and $\A$ is given as in \eqref{field property}.
}
\end{theorem}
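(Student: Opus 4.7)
The plan is to combine a weighted $L^\infty$ propagation (controlling the pointwise size of $f$) with an iterated Duhamel expansion along backward diffuse-reflection cycles, quantified by the heavy-tail distribution of return times to the boundary. The zero-mass condition $\iint f(t)=0$ is what produces genuine time decay rather than mere boundedness. As a preliminary step I would establish global existence, uniqueness, mass conservation, and the pointwise bound
\[
|f(t,x,v)| \;\leq\; C_0 \, e^{-\frac12|v|^2 - \Phi(x)}, \qquad C_0 := \| e^{\frac12|v|^2 + \Phi(x)} f_0 \|_{L^\infty_{x,v}},
\]
by tracing along backward Hamiltonian characteristics. The quantity $\tfrac12|v|^2 + \Phi(x)$ is a constant of motion, $\Phi(\xb)=0$ at the bottom boundary, and the emission factor $\mu(\xb,\vb)$ exactly matches the conjugated exponential weight when combined with the normalisation $\int_{n\cdot v>0}\mu(x,v)(n\cdot v)\,dv = 1$; this yields the bound recursively. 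In particular one gets the \emph{spatial} decay $(1+x_3)^{-1/\ln a}$ pointwise, but no time decay on its own.

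The core of the argument is the backward iteration of the diffuse-reflection formula. For $(t,x,v)$ with a first backward boundary hit at time $t-\tb^1$, the boundary condition gives $f(t,x,v) = \mu(\xb^1,\vb^1)\int_{n\cdot u>0} f(t-\tb^1,\xb^1,u)(n\cdot u)\,du$; iterating this $k$ times produces a nested integral against the wall Maxwellians $\mu(\xb^i,\vb^i)$, with the innermost term being either $f_0$ (if the trajectory reaches $s=0$ before the $k$-th hit) or a $k$-th cycle remainder. Using the Hamiltonian phase-volume identity $dx\,dv = |n\cdot \vb|\,d\xb\,d\vb\,d\tb$ on each backward leg, the $L^1_{x,v}$ norm of each cycle term reduces to an outflux integral at the boundary weighted against $\mu$. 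The mean-zero condition is crucial here: it forces the leading ``constant'' mode of the iterated boundary Markov kernel to cancel, leaving only the decaying modes.

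The quantitative input is the heavy tail of the first return time. Using the vertical energy identity $\tfrac12 v_{b,3}^2 = \log_a(1+x_3^{\max})$ and a direct trajectory computation (of the type carried out in Proposition~\ref{prop:mapV}), one finds $\tb^1(\xb,\vb) \asymp a^{v_{b,3}^2/2}\sqrt{\ln a}$ for large $v_{b,3}$, so that under the boundary Maxwellian
\[
\int_{\{\tb^1 > s\}} \mu(\xb,\vb)\, v_{b,3}\, d\vb \;\lesssim\; s^{-\A}(\ln s)^{O(1)}.
\]
Combining this with the pointwise bound (which disposes of the region $x_3 \gtrsim (\ln\langle t\rangle)^{c}$ via $\int_{x_3>R}(1+x_3)^{-1/\ln a}\,dx_3 \lesssim R^{\,1-1/\ln a}$) and optimising the number of cycles and the truncation radius, one arrives at the claimed estimate \eqref{est:theorem_1}.

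The main obstacle is precisely this heavy tail of $\tb^1$: moments of $\tb^1$ under the wall Maxwellian exist only up to order strictly less than $\A$, so only a bounded number of cycles can be afforded before the moment budget is exhausted, and this is the reason for the hypothesis $\A\geq 8$. The shift $-6$ in the decay exponent accounts for the velocity/time moments expended both in iterating the Duhamel expansion and in invoking the mean-zero cancellation (a handful of moments are consumed at each step: tracking $\vb^i$, the normal-component factor, the change of variables, and the remainder bound). The logarithmic correction $(\ln\langle t\rangle)^{\A-6-\delta/2}$ reflects the borderline integrability at the critical moment together with the interpolation loss when optimising the spatial truncation radius, with $\delta\in(0,1)$ parametrising that interpolation.
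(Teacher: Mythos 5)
Your outline takes a genuinely different route from the paper, and the central mechanism you invoke for exploiting the zero-mass condition is not made concrete enough to close. You assert that in the iterated Duhamel/stochastic-cycle expansion ``the mean-zero condition forces the leading constant mode of the iterated boundary Markov kernel to cancel, leaving only the decaying modes.'' This is a spectral heuristic, and in the present setting it faces a real obstruction: the boundary Markov kernel has a heavy-tailed return-time distribution (moments under the wall Maxwellian exist only up to order roughly $\A$), so there is no spectral gap in any natural $L^1$ or $L^\infty$ norm. Extracting the subgeometric rate requires either (i) a weighted Lyapunov functional in the spirit of Harris' theorem, or (ii) an explicit quantitative coupling. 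The paper implements (ii): it proves a Doeblin-type lower bound $f(NT_0,x,v)\geq \mathfrak{m}(x,v)\{\iint f((N-1)T_0)-\iint\mathbf{1}_{\tf\geq T_0/4}f((N-1)T_0)\}$ (Proposition \ref{prop:Doeblin 1}), splits $f=f_+-f_-$, applies this lower bound to each part separately, and exploits the zero mass to get $\iint f_{\pm}=\tfrac12\|f\|_{L^1}$, yielding $|f(NT_0)|\leq f_+ + f_- - 2\mathfrak{l}$ and hence the $L^1$ contraction \eqref{L1_coerc}. The defect term $\|\mathbf{1}_{\tf\geq T_0/4}f\|_{L^1}$ is then absorbed by the weighted energies $\|\varphi_i(\tf)f\|_{L^1}$ via the transport identity $(\p_t+v\cdot\nabla_x-\nabla\Phi\cdot\nabla_v)\varphi(\tf)=-\varphi'(\tf)$ and the integrability threshold $\int_1^\infty \tau^{3-\A}\varphi(\tau)\,\dd\tau<\infty$ (Lemma \ref{lemma:energy}). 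Nothing analogous to this Lyapunov/coupling step appears in your sketch; without it the zero-mass hypothesis only enters as a qualitative remark.

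A second concern is the spatial-truncation step. Truncating at $x_3\gtrsim(\ln\langle t\rangle)^c$ and invoking $\int_{x_3>R}(1+x_3)^{-1/\ln a}\dd x_3 \lesssim R^{1-1/\ln a}$ yields a tail of size $(\ln\langle t\rangle)^{c(1-1/\ln a)}$, which decays only logarithmically — far short of the claimed $\langle t\rangle^{-(\A-6)}(\ln\langle t\rangle)^{\A-6-\delta/2}$. To get a polynomial rate this way you would need $R$ to be a power of $t$, and then the interplay with the cycle count (bounded by roughly $\A$ before the moment budget runs out, as you correctly note) must be quantified; you do not carry out this optimisation. Relatedly, the ``$-6$'' shift in the exponent is attributed to ``a handful of moments consumed at each step,'' but in the paper it arises from specific arithmetic: the critical weight exponent $3-\A$ in \eqref{cond:varphi}, the choices $\varphi_3\sim(\tau+e)^{\A-5}$, $\varphi_4\sim(\tau+e)^{\A-4}$ in \eqref{varphis}, and the optimisation \eqref{def:M}--\eqref{decay rate:M}. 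As written, the proposal identifies the right raw ingredients (energy conservation along characteristics, the $\tb\asymp a^{v_3^2/2}$ estimate, the $s^{-\A}$ tail, the moment budget), but the core contraction argument and the quantitative bookkeeping that produce \eqref{est:theorem_1} are missing.
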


{\color{black}
%\textcolor{red}{Remark 0.}
\begin{remark}
To prove Theorem \ref{theorem_1}, we introduce and compute the norms of $f (t,x,v)$ at time $t = k T_0$ with $k \in \mathbb{N}$ (see \eqref{|||i}). Further, the time interval $T_0$ depends only on the domain $\O$ (see Propositions \ref{prop:Doeblin} and \ref{prop:energy}). Therefore, 
the constant $C$ only depends on the domain $\O$.
\end{remark}
}

\smallskip

Theorem \ref{theorem} proves the the decay of the exponential moment on the fluctuation. 
%of any solution to \eqref{equation for F}.

\begin{theorem} \label{theorem}
Assume all conditions in Theorem \ref{theorem_1}.
%Moreover, for 
For all $t \geq 0$ and $0 \leq 2 \theta < \theta^\prime = \frac{1}{2}$,
\Be \label{theorem_infty_1}
 	\sup_{t\geq0} \| e^{\theta^\prime (|v|^2+ 2\Phi (x))} f (t)\|_{L^\infty_{x,v}} \lesssim \| e^{\theta^\prime (|v|^2+ 2\Phi (x))} f_0\|_{L^\infty_{x,v}}.
\Ee
\Be \label{theorem_infty}
\sup_{x \in \bar{\O}}\int_{\R^3} e^{\theta  (|v|^2+ 2\Phi (x))} |f(t,x,v) |\dd v  
\lesssim_{\theta} \langle t\rangle^{7 - \A}.
\Ee
\end{theorem}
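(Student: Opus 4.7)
The plan is to establish the two estimates by the method of characteristics, exploiting the energy conservation $|V(s)|^2+2\Phi(X(s))=\mathrm{const}$ along trajectories of \eqref{equation for F} together with $\Phi(x_b)=0$ on $\p\Omega$. Tracing backward from an interior point $(t,x,v)$ to the most recent bounce at $(t-t_b,x_b)$ with outgoing velocity $v^{\mathrm{out}}$ (so $v^{\mathrm{out}}_3>0$ and $|v^{\mathrm{out}}|^2=|v|^2+2\Phi(x)$), the diffuse reflection condition \eqref{diff_F} gives
\begin{equation*}
f(t,x,v)=\mu(v^{\mathrm{out}})\int_{v^1_3<0} f(t-t_b, x_b, v^1)\,|v^1_3|\,\dd v^1
\end{equation*}
whenever $t_b\leq t$; otherwise the characteristic reaches $s=0$ without a bounce and $f(t,x,v)=f_0(X(0),V(0))$ directly.

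For \eqref{theorem_infty_1}, multiply this identity by $w_{\theta'}(x,v):=e^{\theta'(|v|^2+2\Phi(x))}$ with $\theta'=1/2$. Using $|v^{\mathrm{out}}|^2=|v|^2+2\Phi(x)$, the prefactor $w_{\theta'}(x,v)\mu(v^{\mathrm{out}})$ collapses to the constant $1/(2\pi)$, and the inner integral is bounded by $\|w_{\theta'}f(t-t_b)\|_\infty\int_{v^1_3<0}|v^1_3|e^{-\theta'|v^1|^2}\,\dd v^1=2\pi\|w_{\theta'}f(t-t_b)\|_\infty$. Hence $w_{\theta'}(x,v)|f(t,x,v)|\leq \|w_{\theta'}f(t-t_b)\|_\infty$; iterating bounce-by-bounce back to $t=0$ (and using that $w_{\theta'}$ is conserved along the characteristic on the no-bounce set) yields $\|w_{\theta'}f(t)\|_\infty\leq \|w_{\theta'}f_0\|_\infty$ for every $t\geq 0$.

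For \eqref{theorem_infty}, the same representation gives
\begin{equation*}
\int_{\R^3} e^{\theta(|v|^2+2\Phi(x))}|f(t,x,v)|\,\dd v \leq \frac{1}{2\pi}\int_{\R^3} e^{-(\tfrac12-\theta)|v^{\mathrm{out}}|^2}\,\Phi_b(t-t_b, x_b)\,\dd v,
\end{equation*}
where $\Phi_b(s,y):=\int_{v_3<0}|f(s,y,v)||v_3|\,\dd v$ denotes the incoming boundary flux (the no-bounce contribution is handled directly via Part 1 through $|f|\leq e^{-(|v|^2+2\Phi)/2}\|w_{1/2}f_0\|_\infty$). Split the $v$-integration at $\{t_b(x,v)\lessgtr t\}$. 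On the no-bounce region, the logarithmic potential is decisive: the half-period of a trajectory with outgoing speed $v^{\mathrm{out}}_3$ grows like $a^{(v^{\mathrm{out}}_3)^2/2}$, so $t_b>t$ forces $(v^{\mathrm{out}}_3)^2\gtrsim 2\A\ln\langle t\rangle$, and the Gaussian factor $e^{-(\tfrac12-\theta)|v^{\mathrm{out}}|^2}$ produces decay of order $\langle t\rangle^{-(1-2\theta)\A}$, much faster than $\langle t\rangle^{7-\A}$ since $\A\geq 8$ and $2\theta<\tfrac12$. On the bounce region, change variables $v\mapsto v^{\mathrm{out}}$ (a $2$-to-$1$ map with Jacobian $v^{\mathrm{out}}_3/|v_3|$) and then swap $(v^{\mathrm{out}}_1,v^{\mathrm{out}}_2)$ for the hitting point via $x_b^{i}=x_i-v^{\mathrm{out}}_i t_b$ (Jacobian $t_b^{-2}$); the integral becomes one of the schematic form $\int_0^t K(t_b,v^{\mathrm{out}}_3)\int_{\p\Omega}\Phi_b(t-t_b,x_b)\,\dd S_{x_b}\,\dd t_b$. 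The inner boundary integral is $|f(t-t_b)|_{L^1_{\gamma_+}}$, which Green's identity for the Vlasov operator controls by the bulk $L^1$ norms at earlier times, and Theorem~\ref{theorem_1} supplies the decay $\langle t-t_b\rangle^{6-\A}$ (up to logs). The remaining $t_b$-integration contributes precisely the extra factor of $\langle t\rangle$ that upgrades the rate from $\langle t\rangle^{6-\A}$ to $\langle t\rangle^{7-\A}$ and absorbs the logarithmic loss.

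The principal technical obstacle is carrying out the change of variables rigorously: the Jacobian $v^{\mathrm{out}}_3/|v_3|$ has an integrable singularity at the turning points $v_3=0$; the two branches $v_3>0$ (pre-apex) and $v_3<0$ (post-apex) have distinct time-of-flight laws and must be treated separately; and the backward trace times $t-t_b$ must be matched with the discrete instants $kT_0$ at which Theorem~\ref{theorem_1} furnishes its sharpest $L^1$ estimate. A secondary difficulty is that the kernel $K$ inherits the Gaussian weight in a way that couples nontrivially with the time-of-flight $t_b\sim a^{(v^{\mathrm{out}}_3)^2/2}$, requiring careful asymptotics to confirm the advertised exponent $7-\A$.
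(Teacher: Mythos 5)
Your argument for \eqref{theorem_infty_1} has a real gap: the step ``iterating bounce-by-bounce back to $t=0$'' is not justified. Each bounce only yields $w_{\theta'}(x,v)|f(t,x,v)|\le \|w_{\theta'}f(t-\tb(x,v))\|_\infty$, and since $\tb(x,v)$ can be arbitrarily small (for $x$ near $\partial\Omega$ and $|v_3|$ near $0$), the descent need not reach $t=0$ in finitely many steps uniformly in $(x,v)$; what you actually obtain is the circular inequality $\|w_{\theta'}f(t)\|_\infty\le\sup_{0\le s\le t}\|w_{\theta'}f(s)\|_\infty$. The paper breaks this circularity by expanding to $k$ cycles via Lemma \ref{sto_cycle_2} and controlling the not-yet-absorbed remainder \eqref{expand_k4} with the small-probability estimate of Lemma \ref{lem:small_largek}, which produces the absorbable factor $(1-e^{-t})$ in \eqref{k estimate}; some mechanism of that type (an explicit $k$-cycle remainder with quantitative smallness, rather than an informal iteration) is indispensable here.

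For \eqref{theorem_infty} your decomposition (split on $\tb\gtrless t$, then change variables $v\mapsto(\xb,\tb)$ and invoke the trace) is genuinely different from the paper's (which splits on $|v|\gtrless t/2$, $t^1\gtrless 3t/4$, and then runs a second-layer $\varrho$-weighted cycle expansion \eqref{expand_g1}--\eqref{expand_g5} using Lemmas \ref{sto_cycle_2}, \ref{lem:bound1}, \ref{lem:small_largek}). However, several steps are not closed. Your assertion that the no-bounce tail $\langle t\rangle^{-(1-2\theta)\A}$ is ``much faster than $\langle t\rangle^{7-\A}$'' is false in general: as $2\theta\to\frac12$ the exponent tends to $\A/2$, which is \emph{less} than $\A-7$ once $\A>14$, so this piece alone need not achieve the claimed rate. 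Proposition \ref{prop:mapV} furnishes the change of variables $v\mapsto(\xb,\tb)$ only for $x\in\partial\Omega$; for interior $x$ the $2$-to-$1$ branching at $v_3=0$ and the Jacobian $|\det(\partial(\xb,\tb)/\partial v)|^{-1}\sim\tb^{-2}|\partial\tb/\partial v_3|^{-1}$ are different objects which you acknowledge but do not supply, and unlike \eqref{int:rho_2} your integrand carries no $|n\cdot v|$ factor to tame the $\tb^{-2}$. Finally, your kernel reduction lands on $\int_0^t K(\tb,\cdot)\,|f(t-\tb)|_{L^1_{\gamma_+}}\,\dd\tb$, which requires pointwise-in-time control of the boundary trace; the paper only provides the time-averaged bound $\int_s^{s+T_0}|f|_{L^1_{\gamma_+}}\lesssim\|f(s)\|_{L^1_{x,v}}$ from \eqref{trace}, and the resulting discrete convolution with a $\tb$-decaying kernel $K$ does not obviously produce the claimed factor of $\langle t\rangle$ you ascribe to the $\tb$-integration; this heuristic would hold if $K$ were constant, but $K$ decays with $\tb$ through the weight on $v^{\mathrm{out}}_3(\tb)$, so the actual rate has to be computed, not asserted.
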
 

\begin{remark}
The decay rate and the potential have a close relation. When the gravity is constant then the system has an exponential decay \cite{JK2, KIM_VP}. 
%	In the presence of geometric potential (e.g. the domain is convex) then the decay rate is polynomial depends on the spatial dimension \cite{B, JK1, Lods}.
{\color{black}
On the other hand, when the domain is bounded and the potential is zero, the decay rate is polynomial depending on the spatial dimension. This is due to the fact that low velocities stay in the system for a long time. About this direction we refer to \cite{AG, B, JK1, Lods} and the references therein. 
}

\end{remark}

\bigskip

\noindent \textbf{Difficulties and Ideas. }
%Throughout this paper, we use the fundamental idea which is to consider the velocity from the diffusive reflection \eqref{diff_F} to a spatial mixing via the 
%kinetic operator. This idea is realized by \textit{the stochastic cycles}:
{\color{black}
Throughout this paper, we use the fundamental idea where for each velocity obtained from the diffusive reflection boundary condition, we compute how the velocity transfers through space under the kinetic operator. This idea is realized by \textit{the stochastic cycles}.
}

The characteristics of \eqref{equation for F} are determined by the Hamilton ODEs
\Be \label{characteristics}
\begin{cases}
	\frac{\dd}{\dd s} X(s; t, x, v) = V(s; t, x, v),
	\\
	\frac{\dd}{\dd s} V(s; t, x, v) = - \nabla \Phi (X(s; t, x, v)),
\end{cases}  
\Ee
for $- \infty < s, t < \infty$ with $(X(t; t, x, v), V(t; t, x, v)) = (x, v)$.

\begin{definition}[Stochastic Cycles] \label{def_cycles}

{\color{black}
Consider $(X,V)$ solving \eqref{characteristics}, which is the characteristics of the Vlasov equations \eqref{equation for F}.
}
Define the backward exit time $t_{\mathbf{b}}$ and the forward exit time $\tf$, 
	\Be \label{def_tb}
	\begin{split} 
		& t_{\mathbf{b}}(x, v) := \sup \{s \geq 0: X(t - \tau; t, x, v) \in \Omega, \  \forall\tau \in [0, s) \}, \
		x_{\mathbf{b}}(x, v) := X(t - t_{\mathbf{b}}(x, v); t, x, v), 
		\\& t_{\mathbf{f}}(x, v) := \sup \{s \geq 0: X(t + \tau; t, x, v) \in \Omega, \  \forall\tau \in [0, s) \}, \
		x_{\mathbf{f}}(x, v) := X(t + t_{\mathbf{f}}(x, v); t, x, v).
	\end{split} 
	\Ee
	We define the stochastic cycles: 
	\be \notag
	t^1 (t, x, v) = t - t_{\mathbf{b}}(x, v),
	\ x^1 (x, v) = x_{\mathbf{b}}(x, v) = X(t^1, t, x, v), \ \vb(x, v) =  V(t^1, t, x, v),
	\ee
	\Be \label{def:t_k}
	\begin{split}
		& t^k (t, x, v, v^1,..., v^{k-1}) = t^{k-1}
		- t_{\mathbf{b}}(x^{k-1}, v^{k-1}), \ 
		\tb^k = \tf^{k+1} = t^k - t^{k+1},
		\\&  x^k (t, x, v, v^1,..., v^{k-1}) = X(t^{k}; t^{k-1}, x^{k-1}, v^{k-1}), \ 
		\vb^k= V(t^{k+1}; t^k, x^k, v^k),
	\end{split}
	\Ee
	where we define $v^j \in \mathcal{V}_j := \{v^j \in \R^3: n(x^j) \cdot v^j > 0 \}$ with the measure 
	$\dd \sigma_j = \dd \sigma_j (x^j)$ on $\mathcal{V}_j$ which is given by
	\Be \label{def:sigma measure}
	\dd \sigma_j := \mu (x^{j+1}, \vb^{j}) \{ n(x^j) \cdot v^j \} \dd v^j.
	\Ee
Here, $n(x)$ is the outward normal at $x \in \p\O$.
\end{definition}

{\color{black}
Given $(t, x, v) \in \R_+ 
\times \O \times \R^3$, suppose that $(X(s; t, x, v), V(s; t, x, v))$ solves \eqref{characteristics}, the backward exit time $\tb$ stands for the longest backward time, for which the characteristic $X(s; t, x, v)$ stays in the domain $\O$. And $\xb = X(t - \tb; t, x, v)$ is the boundary position when $s = t -\tb$.
Similarly, the forward exit time $\tf$ is the longest forward time, for which the characteristic $X(s; t, x, v)$ stays in the domain $\O$, and $\xf = X(t + \tf; t, x, v)$ is the boundary position when $s = t + \tf$.
Moreover, since the field $\Phi(x)$ is timely independent, this leads that both $\tb$ and $\tf$ are also timely independent.
}

Now we explain a major difficulty in the presence of logarithmic potential. Compared to the constant potential (for example: $gx_3$) considered in \cite{JK2}, the backward exit time $\tb$ and the forward exit time $\tf$ have much weaker control. Indeed we can derive that, for any $(x,v) \in \gamma_-$,  
\Be \notag
a^{\frac{1}{2} |v_3|^2} \sqrt{1 - a^{- \frac{1}{2} |v_3|^2}} 
\lesssim \tb (x, v)
\lesssim a^{\frac{1}{2} |v_3|^2},
\Ee
using the conservation of mass on the characteristic line crucially. This control shows that the backward exit time $\tb$ is comparable to $a^{\frac{1}{2} |v_3|^2}$ when $n(x) \cdot v \gg 1$. The crucial observation is that the Maxwellian $\mu (x, v) = \frac{1}{2 \pi}e^{-  \frac{|v|^2}{2}}$ has a polynomial control on $\tb$ (or $\tf$ for $(x,v) \in \gamma_+$) depending on $\A = [\frac{1}{\ln (a)}]$.
Therefore we are able to control the sum of infinite Maxwellian terms produced by the periodic domain (see Lemma \ref{lem: sum of mu}).

The proof of dynamical stability on the fluctuations $f (t, x, v)$, which solves \eqref{equation for F}, \eqref{diff_F}, and \eqref{f=0_mass}, is based 
%on the Doeblin-type lower bound with a unreachable defect (see Proposition \ref{prop:Doeblin 1}) as follows:
{\color{black}
on a lower bound with the unreachable defect (see Proposition \ref{prop:Doeblin 1}) as follows:
}
\Be \notag
f(NT_0,x,v) \geq  
\mathfrak{m}(x,v) \Big\{
\iint_{\O \times \R^3}f((N-1)T_0,x,v) \dd v \dd x 
-  \iint_{\O \times \R^3} \mathbf{1}_{t_\mathbf{f}(x,v)\geq \frac{T_0}{4}} f((N-1)T_0,x,v)  \dd v \dd x 
\Big\},
\Ee
{\color{black}
where $\mathfrak{m}(x,v)$ is defined in \eqref{def:m}.
This is also considered as the Doeblin condition where $f(t, x, v)$ is bounded below by the part of the mass of molecules in previous stochastic cycles.
We refer \cite{CANIZO2023109830}, which includes a systematic exposition of Doeblin-type arguments.
}

Next we control the unreachable defect
{\color{black}
(see Lemma \ref{lemma:energy}).
}
Since the forward exit time under Vlasov operator can be controlled as follows:
\Be \notag
\frac{\partial}{\partial t} \tf (t, x, v) + v \cdot \frac{\partial}{\partial x} \tf (t, x, v) - \nabla \Phi (x) \cdot \frac{\partial}{\partial v} \tf (t, x, v) = -1,
\Ee
any weight function $\varphi(\tf)$ satisfies
$( v \cdot \nabla_x
% \varphi 
- \nabla \Phi (x) \cdot \nabla_v) \varphi (\tf) = -\varphi^\prime(\tf)$.
Hence, we derive that 
{\color{black}
given a real function $\varphi (\tau )$, it satisfies that for any $\tau \geq 0$, $\varphi (\tau ) \geq0$, $\varphi^\prime \geq0$, and
}
%for $\varphi (\tau ) \geq0$, $\varphi^\prime \geq0$, and
\Be \label{cond:varphi_intro} 
\int_1^\infty \tau^{3 - \A} \varphi(\tau) \dd \tau < \infty. 
\Ee
{\color{black}
It is worth to compare to the constant gravity case \cite{JK2} when we allow $\int_1^\infty e^{- \frac{1}{2} \tau^2} \varphi(\tau) \dd \tau < \infty$ and then the system has an exponential decay. This weaker weight in $\tau$ restricts the range of $\varphi$ and consequently deduces a polynomial decay.
}
%{\color{red} [[Could you add a comment like: It is worth to compare to the constant gravity case: $\int_1^\infty ..... \dd \tau < \infty$. This weaker weight in $\tau$ restricts the range of $\varphi$ and consequently deduces a polynomial decay. ]]}

Suppose $f$ solves \eqref{equation for F} and \eqref{diff_F}, there exists $C>0$ independent of $t_*$, $t$, such that for all $0 \leq t_* \leq t$, 
\Be \notag
\begin{split}
& \ \ \ \ \| \varphi(\tf) f(t) \|_{L^1_{x,v}}
  + \int^{t}_{t_*}
  \| \varphi^\prime(\tf) f  \|_{L^1_{x,v}} \dd s
  +  \int_{t_*}^{t} | \varphi(\tf) f|_{L^1_{\gamma_+}} \dd s
\\& \leq \| \varphi(\tf) f(t_*) \|_{L^1_{x,v}} +
 C (t - t_* + 1) \| f(t_*) \|_{L^1_{x,v}} + \frac{1}{4}  \int^{t}_{t_*} |f |_{L^1_{\gamma_+}} \dd s.
\end{split}
\Ee 
We remark that the exponent $3-\A$ in \eqref{cond:varphi_intro} is determined from the initial condition $\| e^{ \frac{1}{2} |v|^2+ \Phi (x) } f_0\|_{L^\infty_{x,v}} < \infty$ and polynomial control between $\mu (x, v)$ and $\tf$ for $(x,v) \in \gamma_+$. Furthermore, this exponent will restrict the decay rate of Theorem \ref{theorem}.
Then we introduce two norms $\vertiii{\cdot}_2$ and $\vertiii{\cdot}_4$ as
\Be \notag
\vertiii{f}_i :=
\|f \|_{L^1_{x,v}}
+ \frac{ 4 \mathfrak{m}_{T_0} }{ \varphi_{i-1} (\frac{3T_0}{4})} \| \varphi_{i-1}(\tf) f\|_{L^1_{x,v}}  
+ \frac{ 4e \mathfrak{m}_{T_0} }{ T_0 \varphi_{i-1} (\frac{3T_0}{4})} \| \varphi_{i}(\tf) f\|_{L^1_{x,v}},
\Ee
where four polynomial weights $\varphi_1, \varphi_2, \varphi_3, \varphi_4$ are defined in \eqref{varphis}.
We derive the polynomial decay in $L^1$ after using an energy estimate on these norms. 

%After that, we construct a local Maxwellian function as follows:
%\[
%\tilde{\mu} (x, v) := \frac{1}{2 \pi}e^{- ( \frac{|v|^2}{2} + \Phi (x) ) }
%\]
%and show it solves \eqref{equation for F_infty}, and thus it is a stationary solution to \eqref{equation for F}. Then we use the $L^1$-decay on fluctuations to prove $\tilde{\mu} (x, v)$ is the unique stationary solution up to a scale.

At last, to conclude a pointwise bound on the exponential moment, we introduce several weight functions $\varrho (t)$ and $w^\prime (x, v)$. 
Then we control the bound on $\varrho (t) w^\prime (x, v) f(t, x, v)$ via stochastic cycles expansions and polynomial decay on the fluctuations proved before.
This allows us to conclude the decay of the exponential moment.

\bigskip

\noindent \textbf{Structural of the paper. }
For the rest of paper, we collect some basic preliminaries in Section \ref{sec:background}.
Then in Section \ref{sec: L1 estimate}, we study the weighted $L^1$-estimates and prove Theorem \ref{theorem_1}. Finally in Section \ref{sec: exponential moments}, we show an $L^\infty$-estimate of moments in Theorem \ref{theorem}.

%%%%%%%%%%%%%%%%%%%%%%%%%%%%%%

\section{Background}
\label{sec:background}

%\subsection{Characteristics and basic properties} 

%The characteristics of \eqref{equation for F} are determined by the Hamilton ODEs
%\Be \label{characteristics}
%\begin{cases}
%	\frac{\dd}{\dd s} X(s; t, x, v) = V(s; t, x, v),
%	\\
%	\frac{\dd}{\dd s} V(s; t, x, v) = - \nabla \Phi (X(s; t, x, v)),
%\end{cases}  
%\Ee
%for $- \infty < s, t < \infty$ with $(X(t; t, x, v), V(t; t, x, v)) = (x, v)$.

We first list some properties for \eqref{characteristics}, the characteristics of \eqref{equation for F}. 

\begin{lemma}[\cite{JK2}] \label{lem:COV}
For any $g (t, x, v)$ and $(X,V)$ solving \eqref{characteristics}, we have   
	\begin{align}
		\int_{\gamma_{+}} \int_0^{t_{-}} g(t,X(t,t+s,x,v),V(t,t+s,x,v)) |n(x) \cdot v|\dd s \dd v \dd S_x 
		= \iint_{\O \times \R^3} g(t,y,v) \dd y \dd v, 
		\label{COV} \\
		\int_{\gamma_{-}} \int_0^{t_{+}} g(t,X(t,t-s,x,v),V(t,t-s,x,v)) |n(x) \cdot v|\dd s \dd v \dd S_x 
		= \iint_{\O \times \R^3} g(t,y,v) \dd y \dd v, 
		\label{COV+} \\
		\int_{\gamma_{\pm}}
		g(t,x_{\mp}(x, v),v_{\mp}(x, v))
		|n(x) \cdot v| \dd v \dd S_x
		= \int_{\gamma_{\mp}}
		g(t,y,v)
		|n(y) \cdot v|\dd v \dd S_y. \label{COV_bdry}
	\end{align}
	Here, for the sake of simplicity, we have abused the notations temporarily: $t_-=\tb, x_-= \xb$ and $t_+= \tf, x_+= \xf$.  
\end{lemma}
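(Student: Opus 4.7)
The plan is to prove all three identities by a change of variables along the Hamiltonian flow \eqref{characteristics}, using that the flow is time-translation invariant (the field $\Phi$ is autonomous) and preserves the Liouville measure $\dd y\,\dd w$. For \eqref{COV} I would introduce the map
\[
\Psi:\{(s,x,v):(x,v)\in\gamma_+,\ 0<s<\tb(x,v)\}\ \longrightarrow\ \O\times\R^3,\qquad \Psi(s,x,v):=(X(t;t+s,x,v),\,V(t;t+s,x,v)),
\]
whose inverse is $(y,w)\mapsto(\tf(y,w),\xf(y,w),\vf(y,w))$: every interior point, flowed forward, exits through $\gamma_+$ at a unique boundary datum. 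Almost-everywhere bijectivity reduces to checking that the grazing set is null and that $\tf<\infty$ a.e.; the latter follows from the energy conservation $\frac12|V|^2+\Phi(X)=\mathrm{const}$ along characteristics together with the coercivity $\Phi(x)=\log_a(1+x_3)\to\infty$ as $x_3\to\infty$, which forces every finite-energy trajectory to re-hit $\{x_3=0\}$. The Jacobian is then $|\det D\Psi|=|n(x)\cdot v|$: by Liouville invariance the time-$s$ Hamiltonian flow in $(x,v)$ is volume-preserving, so only the variation transverse to $\partial\O$ survives at $s=0$, producing the factor via the kinematic flux identity $\dd y\big|_{s=0}=|n(x)\cdot v|\,\dd s\,\dd S_x$. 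Substituting converts the left-hand side of \eqref{COV} into the full phase-space integral on the right.

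Identity \eqref{COV+} is the mirror image: repeat the construction with the backward map $\Psi'(s,x,v):=(X(t;t-s,x,v),V(t;t-s,x,v))$ for $(x,v)\in\gamma_-$ and $0<s<\tf(x,v)$, whose inverse is $(y,w)\mapsto(\tb(y,w),\xb(y,w),\vb(y,w))$; the Jacobian is again $|n(x)\cdot v|$ because the involution $(X(s),V(s))\mapsto(X(-s),-V(-s))$ is an automorphism of the characteristic system that interchanges the roles of $\gamma_\pm$ and of $\tb,\tf$. For \eqref{COV_bdry} I would deduce the boundary identity from \eqref{COV} and \eqref{COV+} by plugging in characteristic-invariant integrands: given a test function $h$ on $\gamma_-$, the function $g(t,y,w):=h(\xb(y,w),\vb(y,w))$ is constant along trajectories, so the $\dd s$-integral in \eqref{COV} produces the weight $\tb(x,v)$ on $\gamma_+$ and in \eqref{COV+} it produces $\tf(x,v)$ on $\gamma_-$; equating the two common interior integrals, and using that the total transit time satisfies $\tb(x,v)=\tf(\xb(x,v),\vb(x,v))$ so that the extra factor cancels, yields the pullback identity \eqref{COV_bdry}.

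The main obstacle is the Jacobian calculation at $s\downarrow 0$, where $\Psi$ degenerates onto the $5$-dimensional boundary set $\partial\O\times\R^3$: one needs a careful local parametrization of $\gamma_+$, using tangential coordinates on $\partial\O$ together with the full $v$-variable, to isolate $|n(x)\cdot v|$ as the only surviving determinant factor. A secondary technicality is that $\tb$ and $\tf$ are unbounded in the mildly confining logarithmic potential (slow vertical trajectories have recurrence time scaling like $a^{\frac12 v_3^2}$, as noted in the introduction), so $s$ ranges over an unbounded interval; integrability is not an issue because the identities are applied to nonnegative or rapidly decaying $g$, in which case Tonelli/Fubini justifies all exchanges.
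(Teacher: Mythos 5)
The paper itself does not prove this lemma---it is cited verbatim from \cite{JK2}---so there is no internal argument to compare against; your strategy is the standard one and is essentially correct. The flow-out parametrization $\Psi(s,x,v)=(X(t;t+s,x,v),V(t;t+s,x,v))$ on $\{(x,v)\in\gamma_+,\,0<s<\tb(x,v)\}$, the reduction of its Jacobian to $s=0$ via Liouville invariance (the time-$s$ flow is volume preserving, so the Jacobian is $s$-independent), the kinematic factor $|n(x)\cdot v|$ coming from $\partial_s X|_{s=0}=\mp v$, and the a.e.\ finiteness of $\tf$ from energy conservation plus coercivity of $\log_a(1+x_3)$ are all correct; \eqref{COV+} is symmetric under time reversal.

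The one step that needs tightening is the derivation of \eqref{COV_bdry}. Substituting $g=h\circ(\xb,\vb)$ into \eqref{COV} and \eqref{COV+} yields $\int_{\gamma_+}\tb(x,v)\,h(\xb,\vb)\,|n\cdot v|\,\dd v\,\dd S_x=\int_{\gamma_-}\tf(y,w)\,h(y,w)\,|n\cdot w|\,\dd w\,\dd S_y$, and the weights $\tb$ and $\tf$ are functions sitting under different integrals, so they do not literally ``cancel.'' The fix uses exactly the fact you quote: apply this identity with $h$ replaced by $h/\tf$; then $\tb(x,v)=\tf(\xb(x,v),\vb(x,v))$ converts the left-hand weight into $1$ while the right-hand weight becomes $1$ trivially, giving \eqref{COV_bdry}. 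Alternatively, and more directly, the boundary Jacobian computation you already performed shows that the backward billiard map $(x,v)\in\gamma_+\mapsto(\xb(x,v),\vb(x,v))\in\gamma_-$ preserves the flux measure $|n(x)\cdot v|\,\dd v\,\dd S_x$, proving \eqref{COV_bdry} with no interior detour.
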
 

The following Lemma will let us derive the stochastic cycles.

\begin{lemma}[\cite{JK2}] \label{sto_cycle_1}
Suppose $F (x, v)$ solves \eqref{equation for F} and \eqref{diff_F} with $0 \leq t_* \leq t$, then for $k \geq 1$,
	\begin{align}
		F (x, v) = 
		& \mathbf{1}_{t^1 < t_*}
		F (X(t_*; t, x, v), V(t_*; t, x, v)) \label{expand_F1} 
		\\&  + \mu (x^1, \vb) \sum\limits^{k-1}_{i=1}  \int_{\prod_{j=1}^{i} \mathcal{V}_j}   
		\Big\{   \mathbf{1}_{t^{i+1} < t_* \leq t^{i }} F (X(t_*; t^i, x^i, v^i), V(t_*; t^i, x^i, v^i))  \Big\}
		\dd  \Sigma_{i}
		\label{expand_F2}
		\\& + \mu (x^1, \vb) \int_{\prod_{j=1}^{k } \mathcal{V}_j}   
		\mathbf{1}_{t^{k } \geq t_* }
		F (x^{k }, v^{k })
		\dd  \Sigma_{k}
		, \label{expand_F3}
	\end{align} 
	where 
	$\dd  {\Sigma}_{i} := \frac{ \dd \sigma_{i}}{ \mu (x^{i+1}, \vb^{i})} \dd \sigma_{i-1} \cdots  \dd \sigma_1$, with $\dd \sigma_j = \mu (x^{j+1}, \vb^{j}) \{ n(x^j) \cdot v^j \} \dd v^j$ in \eqref{def:sigma measure}, and $\vb^{j} = \vb (x^j, v^j)$ defined in \eqref{def:t_k}. Here, $(X,V)$ solves \eqref{characteristics}.
\end{lemma}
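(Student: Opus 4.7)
The plan is to prove the expansion by induction on $k$, using two ingredients: the transport equation \eqref{equation for F} forces $F$ to be constant along the Hamiltonian characteristics \eqref{characteristics} between consecutive boundary hits, while the diffuse reflection condition \eqref{diff_F} rewrites $F$ at any boundary hit (where the velocity is incoming with $n(x) \cdot v < 0$) as a Maxwellian-weighted integral of $F$ over the outgoing velocities at the same boundary point.

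First I would settle the case $k = 1$. Along the backward characteristic from $(t, x, v)$ the function $F$ is constant until time $t^1 = t - \tb(x, v)$, when the characteristic reaches $(x^1, \vb) \in \p\O \times \R^3$. If $t^1 < t_*$, the characteristic never exits $\O$ on $[t_*, t]$ and this produces the contribution \eqref{expand_F1}. Otherwise $F(t, x, v) = F(t^1, x^1, \vb)$ with $n(x^1) \cdot \vb < 0$, so \eqref{diff_F} gives
\[
F(t^1, x^1, \vb) = \mu(x^1, \vb) \int_{\mathcal{V}_1} F(t^1, x^1, v^1) \{n(x^1) \cdot v^1\} \dd v^1,
\]
which, since $\{n(x^1) \cdot v^1\} \dd v^1 = \dd \Sigma_1$, is the $k = 1$ instance of \eqref{expand_F3}.

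For the inductive step I would assume the identity with index $k$, whose remainder reads $\mu(x^1, \vb) \int \mathbf{1}_{t^k \geq t_*} F(t^k, x^k, v^k) \dd \Sigma_k$. For each outgoing $v^k \in \mathcal{V}_k$ in the integrand, trace the backward characteristic from $(t^k, x^k, v^k)$ (which, since $-v^k$ points into $\O$, re-enters the domain) until it hits $\p\O$ at time $t^{k+1}$ and point $x^{k+1}$ with velocity $\vb^k$. On the event $\{t^{k+1} < t_* \leq t^k\}$ the characteristic carries $F(t^k, x^k, v^k)$ to $F(t_*, X(t_*; t^k, x^k, v^k), V(t_*; t^k, x^k, v^k))$, producing the $i = k$ summand in \eqref{expand_F2}. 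On the complementary event $\{t^{k+1} \geq t_*\}$ the value equals $F(t^{k+1}, x^{k+1}, \vb^k)$, and a second application of \eqref{diff_F} yields
\[
F(t^{k+1}, x^{k+1}, \vb^k) = \mu(x^{k+1}, \vb^k) \int_{\mathcal{V}_{k+1}} F(t^{k+1}, x^{k+1}, v^{k+1}) \{n(x^{k+1}) \cdot v^{k+1}\} \dd v^{k+1}.
\]
The normalization in $\dd \Sigma_{k+1} = \frac{\dd \sigma_{k+1}}{\mu(x^{k+1}, \vb^k)} \dd \sigma_k \cdots \dd \sigma_1$ is engineered precisely to absorb the newly produced Maxwellian, so that only the front factor $\mu(x^1, \vb)$ survives and the updated remainder takes the form \eqref{expand_F3} at level $k + 1$, closing the induction.

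The main obstacle I expect is the measure bookkeeping: one must check that iterating \eqref{diff_F} strips off exactly one Maxwellian at each boundary visit and that this Maxwellian is then cancelled by the normalization factor $1/\mu(x^{j+1}, \vb^j)$ built into $\dd \Sigma_{j+1}$, so that a single front factor $\mu(x^1, \vb)$ persists independently of $k$. A subsidiary technical point is ensuring that for almost every sequence $(v^1, \dots, v^k)$ the iterated quantities $t^i, x^i, \vb^i$ are well-defined and the stopping indicators $\mathbf{1}_{t^{i+1} < t_* \leq t^i}$ are measurable; these are immediate from smoothness of the Hamiltonian flow in $\O$ together with the strict positivity $n(x^i) \cdot v^i > 0$ on $\mathcal{V}_i$, which guarantees $\tb(x^i, v^i) > 0$ almost surely so that the recursion advances at every step.
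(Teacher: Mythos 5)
Your proof is correct, and it follows the standard inductive approach (iterated backward transport plus the diffuse-reflection identity at each boundary hit, with the normalization $1/\mu(x^{j+1},\vb^j)$ in $\dd\Sigma_{j}$ designed precisely to cancel the new Maxwellian factor). The paper itself does not give an argument here but simply cites Lemma~2 of \cite{JK2}, which relies on the same mechanism, so your proof is the one being deferred to.
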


{\color{black}
\begin{proof}
The proof follows from a similar argument, Lemma 2 in \cite{JK2}. 
\end{proof}
}

\begin{remark} \label{probability}
From the Lemma \ref{conservative field} and $\mu (x, v)= \mu (x, |v|)$, we obtain 
 	$\mu (x^{j+1}, \vb^{j}) = \mu (x^{j+1}, v^{j})$ with $\vb^{j} = \vb (x^j, v^j)$. 
 	Therefore, in the rest of paper, we write $\dd \sigma_j$ as
 	\Be \notag
 	\dd \sigma_j = \mu (x^{j+1}, v^{j}) \{ n(x^j) \cdot v^j \} \dd v^j,
 	\Ee
where $v^j \in  \mathcal{V}_j  := \{v^j \in \R^3: n(x^j) \cdot v^j > 0 \}$ and $(X,V)$ solves \eqref{characteristics}.
\end{remark}

\begin{lemma} \label{conservative field}
Consider $(X,V)$ solving \eqref{characteristics}, then for $v \in  \mathcal{V}  := \{v \in \R^3: n(x) \cdot v > 0 \}$, $x \in \p\O$ and $\vb = \vb (x, v)$,
\Be 
|\vb| = |v|.
\Ee
\end{lemma}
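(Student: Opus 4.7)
The plan is to prove this by invoking conservation of the Hamiltonian energy along the characteristic flow \eqref{characteristics}, together with the key observation that the logarithmic potential $\Phi(x) = \log_a(1 + x_3)$ vanishes identically on the boundary $\partial\Omega = \mathbb{T}^2 \times \{0\}$ since $\log_a(1) = 0$.

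First, I would verify conservation of the total energy along \eqref{characteristics}. For the Hamiltonian $H(x,v) := \tfrac{1}{2}|v|^2 + \Phi(x)$, one computes along $(X(s),V(s))$:
\begin{equation*}
\frac{\mathrm{d}}{\mathrm{d} s} H(X(s;t,x,v), V(s;t,x,v))
= V \cdot \dot V + \nabla \Phi(X) \cdot \dot X
= -V \cdot \nabla \Phi(X) + \nabla \Phi(X) \cdot V = 0,
\end{equation*}
so that $H$ is constant along every characteristic curve.

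Next, since $x \in \partial \Omega$ we have $x_3 = 0$ and hence $\Phi(x) = \log_a(1) = 0$. By the very definition of the backward exit point, $x^1 = x_{\mathbf{b}}(x,v) = X(t - t_{\mathbf{b}}; t, x, v) \in \partial \Omega$ as well, so $\Phi(x_{\mathbf{b}}) = 0$ also. Applying the conservation of $H$ between the times $t$ and $t - t_{\mathbf{b}}$ then gives
\begin{equation*}
\tfrac{1}{2}|v|^2 = \tfrac{1}{2}|v|^2 + \Phi(x)
= H(x,v) = H(x_{\mathbf{b}}, v_{\mathbf{b}})
= \tfrac{1}{2}|v_{\mathbf{b}}|^2 + \Phi(x_{\mathbf{b}}) = \tfrac{1}{2}|v_{\mathbf{b}}|^2,
\end{equation*}
and taking square roots yields $|v_{\mathbf{b}}| = |v|$, as claimed.

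There is no real obstacle in this proof; the only subtlety is recognizing that the specific form \eqref{field property_1} was chosen precisely so that $\Phi$ vanishes on $\partial\Omega$, which is what makes the statement $|v_{\mathbf{b}}| = |v|$ hold at all (in contrast, a constant gravitational potential $g x_3$ also vanishes on $\partial\Omega$, giving the same conservation at boundary points, as used in \cite{JK2}). This speed-preservation at boundary collisions is what then legitimizes the simplification $\mu(x^{j+1}, \bar v^j) = \mu(x^{j+1}, v^j)$ used in Remark \ref{probability} for the stochastic cycle measures $\mathrm{d}\sigma_j$.
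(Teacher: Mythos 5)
Your proof is correct and follows essentially the same route as the paper: conservation of the Hamiltonian $H(x,v)=\tfrac12|v|^2+\Phi(x)$ along \eqref{characteristics}, combined with the observation that $\Phi$ vanishes on $\partial\Omega$ because $\log_a(1)=0$, so the potential contributes nothing at either endpoint and $|\vb|=|v|$ follows.
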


\begin{proof}

{\color{black}
The proof follows from a similar argument, Lemma 3 in \cite{JK2}.
}
Since $(X,V)$ solves \eqref{characteristics} with $v \in  \mathcal{V}$, we compute the following derivative:
\Be \label{eq:energy conservation}
\begin{split} 
& \ \ \ \ \frac{\dd}{\dd s}
\big( 
\frac{|V(s;t,x,v)|^2}{2} + \Phi(X(s;t,x,v)) 
\big)
\\& = V(s;t,x,v) \cdot \frac{\dd V}{\dd s} + \nabla \Phi \cdot \frac{\dd X}{\dd s} 
= - V(s;t,x,v) \cdot \nabla \Phi (X(s; t, x, v)) + \nabla \Phi \cdot V(s; t, x, v) = 0.
\end{split}
\Ee
Recall that
$(X(t; t, x, v), V(t; t, x, v)) = (x, v)$, $(X(t - t_\mathbf{b}; t, x, v), V(t - t_\mathbf{b}; t, x, v)) = (x_\mathbf{b}, \vb)$. By taking $s = t - \tb$ and $s = t$, we obtain
\Be \notag
|v|^2/2 + \Phi(x) = |\vb|^2/2 + \Phi(x_\mathbf{b}).
\Ee  
Since $\Phi (x)|_{x_3 = 0} = \log_a (1 + x_3)|_{x_3 = 0} \equiv 0$ and $x, \xb \in \p\O$, we have
\be \notag
\Phi(x) = \Phi(x_\mathbf{b}) = 0,
\ee
which implies $|\vb| = |v|$.
\end{proof}

{\color{black}
%\textcolor{red}{Remark 2.}
\begin{remark}
Since $\mu (x, v) = \frac{1}{2 \pi}e^{-  \frac{|v|^2}{2}}$ is radial, that is $\mu (x, v_1) = \mu (x, v_2)$ if $|v_1| = |v_2|$.
From the Lemma \ref{conservative field}, we obtain 
$\mu (x^{j+1}, \vb^{j}) = \mu (x^{j+1}, v^{j})$
where $\vb^{j} = \vb (x^j, v^j)$. 
Therefore, in the rest of paper, we write $\dd \sigma_j$ as
 	\Be \notag
 	\dd \sigma_j = \mu (x^{j+1}, v^{j}) \{ n(x^j) \cdot v^j \} \dd v^j,
 	\Ee
where $v^j \in  \mathcal{V}_j  := \{v^j \in \R^3: n(x^j) \cdot v^j > 0 \}$ and $(X,V)$ solves \eqref{characteristics}.\end{remark}
}

Now we consider the change of variables 
$v \in \{v \in \R^3
: n(x) \cdot v  >  0\}
\mapsto
(\xb(x,v), \tb(x,v)) \in \partial \Omega \times \R_{+}.$
Since the domain is periodic, this is a local bijective mapping. For fixed $x, \tb$ and $\xb$, we introduce the set of velocities $\{v^{m, n}\}$ with $m, n \in \mathbb{Z}$ such that
%\Be \label{def:vmn}
%v^{m, n}
%\in \{v^{m, n} \in \R^3
%: n(x) \cdot v^{m, n} > 0\}
% \mapsto
%(\xb + (m, n, 0), \tb)
%= (\xb, \tb)
%\in \partial \Omega \times \R_{+}.
%\ee
{\color{black}
\Be \label{def:vmn}
v^{m, n}
\in \{v^{m, n} \in \R^3
: n(x) \cdot v^{m, n} > 0\}
 \mapsto
(\xb, \tb):=
(\xb + (m, n, 0), \tb)
= (\xb, \tb)
\in \partial \Omega \times \R_{+}.
\ee
}

\begin{proposition} \label{prop:mapV}
Consider $(X,V)$ solving \eqref{characteristics}, 
\begin{itemize}
\item For fixed $x \in \p\O$, and $m, n \in \mathbb{Z}$, we introduce the following map:
\Be \label{mapV}
v \in \{v \in \R^3
: n(x) \cdot v  >  0\}
\mapsto (\xb, \tb):=
(\xb(x,v)  + (m, n, 0), \tb(x,v)) \in \partial \Omega \times \R_{+}.
\Ee 
Then the map \eqref{mapV} is locally bijective and has the change of variable formula as
\Be \label{jacob:mapV}
\tb^{-2} (1 + |v_3| \tb)^{-1} \dd \tb \dd S_{\xb }
\lesssim \dd v \lesssim \tb^{-2} \dd \tb \dd S_{\xb }.
\Ee

%Therefore, for any non-negative function $\phi(v)$, there exists a constant $C$ such that
%\be \notag
%\int_{v \in v^{m, n}} \phi (v) \dd v \leq C \iint_{\p\O \times \R_{+}} \phi (\xb, \tb) |\tb|^{-3}|n (\xb ) \cdot \vb | \dd \tb \dd S_{\xb }.
%\ee

\item Similarly we have a locally bijective map: 
\Be \notag
v \in \{v \in \R^3
: n(x) \cdot v<0\}
\mapsto (\xf , \tf ):=
(\xf(x,v) + (m, n, 0), \tf(x,v)) \in \partial \Omega \times \R_{+},
\Ee 
with 
\Be \label{mapV_f}
\tf^{-2} (1 + |v_3| \tf)^{-1} \dd \tf \dd S_{\xf}
\lesssim \dd v \lesssim \tf^{-2} \dd \tf \dd S_{\xf}.
\Ee 
\end{itemize}
\end{proposition}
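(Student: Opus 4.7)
The plan is to exploit the fact that $\Phi(x)=\log_a(1+x_3)$ depends only on $x_3$, so $V_1\equiv v_1$ and $V_2\equiv v_2$ along \eqref{characteristics}, and the backward exit time $\tb$ depends solely on the initial vertical speed $v_3$ (for $x\in\p\O$ with $x_3=0$). This reduces the Jacobian of the map in \eqref{mapV} to a triangular matrix and reduces the problem to estimating the single scalar derivative $\p\tb/\p v_3$.

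Concretely, integrating $\dot X_i=V_i$, $i=1,2$, from $t-\tb$ to $t$ gives
\[
\xb^{(m,n)}:=\xb(x,v)+(m,n,0)=\bigl(x_1-v_1\,\tb(v_3)+m,\ x_2-v_2\,\tb(v_3)+n,\ 0\bigr),
\]
so the Jacobian of $(v_1,v_2,v_3)\mapsto(\xb^{(m,n)}_1,\xb^{(m,n)}_2,\tb)$ is block triangular with determinant $-\tb^2\,\p\tb/\p v_3$. By the inverse function theorem, non-vanishing of this determinant gives local bijectivity and the change of variables
\[
\dd v=\frac{\dd\tb\,\dd S_{\xb}}{\tb^2\,|\p\tb/\p v_3|}.
\]

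The heart of the argument is a two-sided bound on $|\p\tb/\p v_3|$. Writing $u=|v_3|$ and using Lemma \ref{conservative field} together with the symmetry of the bounce, the apex height is $h_{\max}(u)=a^{u^2/2}-1$ (from $\Phi(h_{\max})=u^2/2$). The substitution $p=2\log_a(1+x_3)/u^2$ in the phase integral $\tb(u)=2\int_0^{h_{\max}}(u^2-2\Phi)^{-1/2}\dd x_3$ yields
\[
\tb(u)=u\,\ln a\int_0^1\frac{a^{u^2 p/2}}{\sqrt{1-p}}\,\dd p,\qquad \frac{\p\tb}{\p u}=\ln a\int_0^1\frac{(1+u^2 p\,\ln a)\,a^{u^2 p/2}}{\sqrt{1-p}}\,\dd p.
\]
Applying the pointwise bounds $1\leq 1+u^2 p\ln a\leq 1+u^2\ln a$ inside the integrand, and combining with $\tb/u\geq 2\ln a>0$ (from $a^{u^2p/2}\geq 1$) and $u^2\ln a\leq \tfrac12 u\,\tb$, gives
\[
1\ \lesssim\ \Bigl|\tfrac{\p\tb}{\p v_3}\Bigr|\ \lesssim\ 1+|v_3|\,\tb,
\]
which inserted into the determinant proves \eqref{jacob:mapV}.

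The forward statement \eqref{mapV_f} follows from the same computation because \eqref{characteristics} is time-reversal symmetric (the potential is stationary), so the roles of $(\tb,\xb)$ and $(\tf,\xf)$ are interchangeable. The main obstacle is controlling the degenerate regime $v_3\to 0$, where $\tb\to 0$ and one could fear a breakdown of the estimates; this is resolved precisely by the identity $\tb/u\geq 2\ln a>0$, which provides a positive lower bound on $|\p\tb/\p v_3|$ uniformly in $v_3$ and keeps both ends of the Jacobian estimate under control.
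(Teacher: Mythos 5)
Your overall approach is correct and, structurally, the same as the paper's: you both reduce the Jacobian of $v\mapsto(\xb,\tb)$ to the block-triangular matrix with determinant $\tb^2\,\p\tb/\p v_3$ (since $V_1,V_2$ are conserved and $\tb$ depends only on $v_3$), and then establish $1\lesssim|\p\tb/\p v_3|\lesssim 1+|v_3|\tb$. Where you differ is in the parametrization of $\tb$: the paper integrates $\dd s = -\ln(a)\,a^{\frac12 v_3^2}a^{-\frac12 V_3^2}\dd V_3$ to get $\tb = 2\ln(a)\,a^{v_3^2/2}\int_0^{|v_3|}a^{-y^2/2}\dd y$ and then obtains a closed-form $\p\tb/\p v_3$; you substitute the normalized potential height $p=2\log_a(1+x_3)/u^2$ into the phase integral and work with $\tb=u\ln a\int_0^1 a^{u^2p/2}(1-p)^{-1/2}\dd p$. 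Both representations are valid and lead to the same two-sided estimates.

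There is, however, a gap in how you close the upper bound. From your integrand bound you get
\begin{equation*}
\frac{\p\tb}{\p u}\leq(1+u^2\ln a)\,\frac{\tb}{u}.
\end{equation*}
You then invoke $u^2\ln a\leq\tfrac12 u\tb$. Substituting that into the parenthesis gives $(1+\tfrac12 u\tb)\tfrac{\tb}{u}=\tfrac{\tb}{u}+\tfrac{\tb^2}{2}$, and the second term is \emph{not} $\lesssim 1+u\tb$ in the regime $u\gg1$: there $\tb\asymp a^{u^2/2}$, so $\tb^2\asymp a^{u^2}$ while $u\tb\asymp u\,a^{u^2/2}$, and $\tb^2/(u\tb)=\tb/u\to\infty$. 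So as literally written the argument overshoots. The correct route is to keep the two terms separate,
\begin{equation*}
(1+u^2\ln a)\,\frac{\tb}{u}=\frac{\tb}{u}+u\ln(a)\,\tb,
\end{equation*}
bound the second term directly by $u\tb$ (up to the constant $\ln a$), and handle $\tb/u$ by a case split: for $u\leq1$, $\tb/u=\ln a\int_0^1 a^{u^2p/2}(1-p)^{-1/2}\dd p\leq 2\ln(a)\,a^{1/2}\lesssim 1$; for $u>1$, $\tb/u\leq\tb\leq u\tb$. That gives $\p\tb/\p u\lesssim 1+u\tb$, and together with your correct lower bound $\p\tb/\p u\geq\tb/u\geq2\ln a>0$ the Jacobian estimate \eqref{jacob:mapV} follows. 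This is a small fix, but without it the upper-bound step does not go through.
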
 

\begin{proof}
We just need to show \eqref{jacob:mapV}, since \eqref{mapV_f} can be deduced after changing the backward variables into forward variables. For the sake of simplicity, we have abused the notations temporarily: 
\be \notag
\begin{split}
& \xb := x^1 = (x^1_1, x^1_2, x^1_3)= (x^1_\parallel, x^1_3), \ \
v = (v_1, v_2, v_3) = (v_\parallel, v_3), 
\\& \vb = (v_{\mathbf{b}, 1}, v_{\mathbf{b}, 2}, v_{\mathbf{b}, 3}), \ \
t_{\mathbf{b}} = \tb(x,v).
\end{split}
\ee
Recall $\Phi (x) = \log_a (1 + x_3)$, then we get $\nabla \Phi = (0, 0, \frac{1}{(1 + x_3) \ln (a)})$ with $\frac{1}{\ln (a)} > 1$.

Now we compute the determinant of the Jacobian matrix. 
Fixing $x, t$ and following the characteristics trajectory, we deduce
\Be \label{first cov first part}
 x^1 + \int^{t}_{t - \tb}  V(s;t,x, v) \dd s + (m, n, 0) = x,
\Ee
\Be \label{first cov second part}
\vb +  \int^{t}_{t - \tb}  - \nabla \Phi ( X(s;t,x, v)) \dd s = v.
\Ee
Inputting \eqref{first cov second part} into \eqref{first cov first part}, we have
\Be \label{x12 expression}
\begin{split}
& x^1_1 + \tb v_1 + m = x_1,
\\& x^1_2 + \tb v_2 + n = x_2.
\end{split}
\Ee
From \eqref{first cov second part}, we obtain $\tb = \tb (v_3)$ and
\Be 
\begin{split}
& \frac{\partial \tb}{\partial v} 
= (0, 0, \frac{\partial \tb}{\partial v_3}), 
\ \text{and} \
\frac{\partial x^1_\pll}{\partial v} 
= 
\begin{pmatrix}
    - \tb & 0 & - v_1 \frac{\partial \tb}{\partial v_3}\\
    0 & - \tb & - v_2 \frac{\partial \tb}{\partial v_3}
\end{pmatrix}.
\end{split}
\Ee
Therefore, we get
\Be \label{v xbtb cov first expression}
    \det 
    \Big(
    \frac{\partial x^1_{\pll}}{\partial v}, \frac{\partial \tb}{\partial v}
    \Big)
    = (\tb)^2 \times \frac{\partial \tb}{\partial v}.
\Ee
Now recall \eqref{characteristics}, 
\Be \label{V3 characteristics}
\frac{\dd}{\dd s} V_3 (s; t, x, v) 
= - \frac{1}{(1 + X_3 (s; t, x, v)) \ln (a)}.
\Ee
From Lemma \ref{conservative field},
\Be 
\frac{|V_3 (s;t,x,v)|^2}{2} + \Phi(X(s;t,x,v)) = \frac{v^2_3}{2}.
\Ee
Thus, we obtain
\Be \label{1+x form}
1 + X_3 (s;t,x,v) 
= a^{\frac{1}{2} (v^2_3 - V^2_3 (s;t,x,v))},
\Ee
and
\Be \label{v3 conservative}
|v_{\mathbf{b}, 3}| = |v_3|.
\Ee
Inputting \eqref{1+x form} into \eqref{V3 characteristics}, we derive
\Be \label{tb v_3 diff expression}
a^{- \frac{1}{2} V^2_3 (s;t,x,v)} \dd V_3 (s; t, x, v) 
= - \frac{a^{- \frac{1}{2} v^2_3}}{\ln (a)} \dd s.
\Ee
Taking the integration towards time $s \in [t - \tb, t]$ on \eqref{tb v_3 diff expression}, we get
\Be \label{tb v_3 first expression}
2 \int^{v_3}_{0} a^{- \frac{1}{2} V^2_3 (s;t,x,v)} \dd V_3 (s; t, x, v) 
= - \frac{a^{- \frac{1}{2} v^2_3}}{\ln (a)} \tb.
\Ee

{\color{black}
Note that $v_3 = V_3 (t;t,x,v)$ and $\vbn = V_3 (t - \tb;t,x,v)$.
Taking the integration towards time $s \in [t - \tb, t]$ on \eqref{tb v_3 diff expression}, we get
\Be \notag
\int^{v_3}_{\vbn} a^{- \frac{1}{2} V^2_3 (s;t,x,v)} \dd V_3 (s; t, x, v) 
= \int^{t}_{t - \tb} - \frac{a^{- \frac{1}{2} v^2_3}}{\ln (a)} \dd s
= - \frac{a^{- \frac{1}{2} v^2_3}}{\ln (a)} \tb.
\Ee
From Lemma \ref{conservative field}, $v_{\mathbf{b}, 3} = - v_3 > 0$. Further, since $a^{- \frac{1}{2} V^2}$ is an even function, we have
\Be \notag
\int^{v_3}_{\vbn} a^{- \frac{1}{2} V^2_3 (s;t,x,v)} \dd V_3 (s; t, x, v) 
= 2 \int^{v_3}_{0} a^{- \frac{1}{2} V^2_3 (s;t,x,v)} \dd V_3 (s; t, x, v) 
= - \frac{a^{- \frac{1}{2} v^2_3}}{\ln (a)} \tb.
\Ee
}

We estimate the following integration: 
\Be
\begin{split} 
\int^{|v_3|}_{0} a^{- \frac{1}{2} y^2} \dd y
\leq \int^{\infty}_{0} a^{- \frac{1}{2} y^2} \dd y
\lesssim \sqrt{\frac{1}{\ln (a)}}.
\end{split}
\Ee
On the other hand, 
\Be
\begin{split} 
\int^{|v_3|}_{0} a^{- \frac{1}{2} y^2} \dd y
\gtrsim \sqrt{\int^{\frac{\pi}{2}}_{0} \int^{v_3}_{0} a^{- \frac{1}{2} r^2} r \dd r \dd \theta}
\gtrsim \sqrt{\frac{1 - a^{- \frac{1}{2} v^2_3}}{\ln (a)}}.
\end{split}
\Ee
From \eqref{tb v_3 first expression}, we get
\Be \label{tb first expression}
\tb = 2 \ln (a) a^{\frac{1}{2} v^2_3}
\int^{|v_3|}_{0} a^{- \frac{1}{2} V^2_3 (s;t,x,v)} \dd V_3 (s; t, x, v).
\Ee
{\color{black}
Then
\Be \notag
\frac{2 \ln (a)}{\sqrt{\ln (a)}}
a^{\frac{1}{2} v^2_3} \sqrt{1 - a^{- \frac{1}{2} v^2_3}}
\lesssim \tb
\lesssim \frac{2 \ln (a)}{\sqrt{\ln (a)}} a^{\frac{1}{2} v^2_3}.
\Ee
Since $a$ is fixed, for simplicity we rewrite the above as
}
%Then
\Be \label{tb estimate}
a^{\frac{1}{2} v^2_3} \sqrt{1 - a^{- \frac{1}{2} v^2_3}} 
\lesssim \tb
\lesssim a^{\frac{1}{2} v^2_3}.
\Ee
Note that for $0 \leq |v_3| \ll 1$, we use the Taylor expansion on $a^{- \frac{1}{2} v^2_3}$, and obtain
\Be \label{small tb estimate}
\tb \gtrsim a^{\frac{1}{2} v^2_3} \sqrt{1 - a^{- \frac{1}{2} v^2_3}} 
\gtrsim \sqrt{1 - a^{- \frac{1}{2} v^2_3}} 
\gtrsim |v_3|. 
\Ee

Next, we take the derivative $\frac{\dd}{\dd v_3}$ on \eqref{tb first expression}, and write $\frac{\dd \tb}{\dd v_3}$ as
\Be 
\begin{split}
\frac{\dd \tb}{\dd v_3} 
= - 2 \ln (a) + v_3 \tb < 0.
\end{split}
\Ee
Thus, we derive that
\Be \label{tb v_3 diff second expression}
\begin{split}
1 + |v_3| a^{\frac{1}{2} v^2_3} \sqrt{1 - a^{- \frac{1}{2} v^2_3}}
\lesssim \Big| \frac{\dd \tb}{\dd v_3} \Big|
\lesssim 1 + |v_3| a^{\frac{1}{2} v^2_3}.
\end{split}
\Ee
{\color{black}
Since $a$ is a fixed constant, we can write
\Be \notag
\begin{split}
1 + |v_3| a^{\frac{1}{2} v^2_3} \sqrt{1 - a^{- \frac{1}{2} v^2_3}}
\lesssim \Big| \frac{\dd \tb}{\dd v_3} \Big|
\lesssim 1 + |v_3| \tb.
\end{split}
\Ee
}
Inputting \eqref{tb estimate}, \eqref{tb v_3 diff second expression} into \eqref{v xbtb cov first expression}, we get the following:
\Be 
 \Big|  \det \Big(
    \frac{\partial x^1_{\pll}}{\partial v}, \frac{\partial \tb}{\partial v}
    \Big) \Big|
= (\tb)^2 \times \Big| \frac{\partial \tb}{\partial v} \Big|
\gtrsim (\tb)^2 \times \Big( 1 + |v_3| a^{\frac{1}{2} v^2_3} \sqrt{1 - a^{- \frac{1}{2} v^2_3}} \Big) 
\gtrsim (\tb)^2,
%\\& \gtrsim \Big( a^{\frac{1}{2} v^2_3} \sqrt{1 - a^{- \frac{1}{2} v^2_3}} \Big)^2 \times 
%\Big( 1 + |v_3| a^{\frac{1}{2} v^2_3} \sqrt{1 - a^{- \frac{1}{2} v^2_3}} \Big)
%\\& = a^{v^2_3} (1 - a^{- \frac{1}{2} v^2_3}) \times 
%\Big( 1 + |v_3| a^{\frac{1}{2} v^2_3} \sqrt{1 - a^{- \frac{1}{2} v^2_3}} \Big)
\Ee
and
\Be 
\begin{split}
 \Big|  \det \Big(
    \frac{\partial x^1_{\pll}}{\partial v}, \frac{\partial \tb}{\partial v_3}
    \Big) \Big|
= (\tb)^2 \times \Big| \frac{\partial \tb}{\partial v} \Big|
\lesssim (\tb)^2 \times \big( 1 + |v_3| \tb \big) 
\end{split}
\Ee
Therefore, we conclude 
\Be
\frac{1}{\tb^2 (1 + |v_3| \tb) }
\lesssim \Big|  \det \Big(
    \frac{\partial x^1_{\pll}}{\partial v}, \frac{\partial \tb}{\partial v}
    \Big) \Big|^{-1}
\lesssim \frac{1}{\tb^2},
\Ee
and we conclude \eqref{jacob:mapV}.
\end{proof}

\begin{remark}
\label{rmk:vjmn}
We can apply Proposition \ref{prop:mapV} on 
$v^{j} \in \mathcal{V}_{j} 
\mapsto
(x^{j+1}, \tb^{j})
:=
(x_{\mathbf{b}} (x^{j}, v^{j}), \tb (x^{j}, v^{j}))$, and this is also a local bijective mapping. For fixed $\tb^{j}$ and $x^{j+1}$, we introduce the set of velocities $\{v^{m, n}_{j}\}$ with $m, n \in \mathbb{Z}$ such that
\Be \label{def:vjmn}
v^{m, n}_{j} 
\in \mathcal{V}_{j} 
 \mapsto
(x^{j+1} + (m, n, 0), \tb^{j})
= (x^{j+1}, \tb^{j})
\in \partial \Omega \times 
[0, t_{j}],
\ee
with the change of variable formula as
\be \label{vjmn}
\dd v^{m, n}_{j} \lesssim |\tb^{j}| ^{-2} \dd \tb^{j} \dd S_{x^{j+1}}.
\Ee 
\end{remark}

%{\color{black}
%\textcolor{red}{Remark 3.}
%We can apply Proposition \ref{prop:mapV} on 
%$v^{j} \in \mathcal{V}_{j} 
%\mapsto
%(x^{j+1}, \tb^{j})
%:=
%(x_{\mathbf{b}} (x^{j}, v^{j}), \tb (x^{j}, v^{j}))$, and this is also a local bijective mapping. For fixed $\tb^{j}$ and $x^{j+1}$, we introduce the set of velocities $\{v^{m, n}_{j}\}$ with $m, n \in \mathbb{Z}$ such that
%\Be \notag
%v^{m, n}_{j} 
%\in \mathcal{V}_{j} 
% \mapsto
%(x^{j+1} + (m, n, 0), \tb^{j})
%\in \partial \Omega \times 
%[0, t_{j}],
%\ee
%with the change of variable formula as
%\be \notag
%\dd v^{m, n}_{j} \lesssim |\tb^{j}| ^{-2} \dd \tb^{j} \dd S_{x^{j+1}}.
%\Ee 
%}

{\color{black}
Because of the periodic domain, we will gain an infinite sum of maxwellian terms as the integrand after the change of variable in Remark \ref{rmk:vjmn}. In the following lemma, we do an estimate on this infinite sum.
}

\begin{lemma} \label{lem: sum of mu}
Consider $(X,V)$ solving \eqref{characteristics} with $x^{i-1} \in \p\O$, for $\tb^{i-2} \geq 1$,
\Be \label{t>1,i-2}
\sum\limits_{m, n \in \mathbb{Z}} \mu (x^{i-1}, v^{m, n}_{i-2, \mathbf{b}}) 
\lesssim |\tb^{i-2}|^{4 - \A}.
\Ee
For $0 \leq \tb^{i-2} < 1$,
\Be \label{t<1,i-2}
\sum\limits_{m, n \in \mathbb{Z}} \mu (x^{i-1}, v^{m, n}_{i-2, \mathbf{b}})
\lesssim \sum\limits_{|m| < 2, |n| < 2} \mu (x^{i-1}, v^{m, n}_{i-2, \mathbf{b}}) +
e^{-\frac{1}{2 (\tb^{i-2})^2}},
\Ee
where $v^{m, n}_{i-2, \mathbf{b}} = \vb (x^{i-1}, v^{m, n}_{i-2})$, which was defined in \eqref{def:t_k} and \eqref{def:vmn}.
\end{lemma}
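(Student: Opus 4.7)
The plan is to combine the explicit formulas for $v^{m,n}_{i-2}$ derived in the proof of Proposition \ref{prop:mapV} with the conservation of energy from Lemma \ref{conservative field}. Since $\mu(x,v)$ is radial in $v$ and $|\vb^k|=|v^k|$, I will first rewrite $\mu(x^{i-1}, v^{m,n}_{i-2,\mathbf{b}}) = \mu(x^{i-1}, v^{m,n}_{i-2})$. Applying \eqref{x12 expression} to the $(i-2)$-th cycle, the horizontal component satisfies $v^{m,n}_{i-2,\parallel} = (x^{i-2}_\parallel - x^{i-1}_\parallel - (m,n))/\tb^{i-2}$, while the vertical component $v^{m,n}_{i-2,3}$ is a function of $\tb^{i-2}$ alone via \eqref{tb first expression}. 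Since $x^{i-2}_\parallel, x^{i-1}_\parallel \in \mathbb{T}\cong [0,1)$, this produces the pointwise lower bound
\begin{equation*}
|v^{m,n}_{i-2}|^2 \;\geq\; (v^{m,n}_{i-2,3})^2 \;+\; \frac{(|m|-1)_+^2 + (|n|-1)_+^2}{(\tb^{i-2})^2}.
\end{equation*}

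For $\tb^{i-2} \geq 1$ I will invoke \eqref{tb estimate} together with $\A=[1/\ln a]$ to obtain the vertical decay $e^{-(v^{m,n}_{i-2,3})^2/2} \lesssim (\tb^{i-2})^{-1/\ln a} \leq (\tb^{i-2})^{-\A}$, and then compare the horizontal lattice sum to a Gaussian integral of variance $O((\tb^{i-2})^2)$ to get
\begin{equation*}
\sum_{m,n\in\Z} e^{-\big((|m|-1)_+^2 + (|n|-1)_+^2\big)/(2(\tb^{i-2})^2)} \;\lesssim\; (\tb^{i-2})^2.
\end{equation*}
Multiplying the two factors yields $\sum_{m,n}\mu \lesssim (\tb^{i-2})^{2-\A} \leq (\tb^{i-2})^{4-\A}$, where the last inequality uses $\tb^{i-2}\geq 1$, proving \eqref{t>1,i-2}.

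For $0 \leq \tb^{i-2} < 1$ I will split the sum at $\max(|m|,|n|)<2$, retaining this finite block as in \eqref{t<1,i-2}. On the tail $\max(|m|,|n|)\geq 2$ one has $(|m|-1)_+^2 + (|n|-1)_+^2 \geq 1$, so I can factor $e^{-1/(2(\tb^{i-2})^2)}$ out of every term and bound the residual by passing to $1/(\tb^{i-2})^2 \geq 1$ in the remaining exponent, which leaves a uniformly (in $\tb^{i-2}$) summable Gaussian series. The main obstacle is matching the polynomial exponent $\A$ rather than extracting a stronger exponential: it is the quantitative identification $v_3^2 \approx 2\log_a \tb^{i-2}$ coming from \eqref{tb estimate} that converts the Gaussian smallness of $\mu$ in the vertical direction into a polynomial rate whose exponent is governed by $\ln a$, and this is precisely the mechanism responsible for the logarithm-base-dependent decay propagated to Theorems \ref{theorem_1} and \ref{theorem}.
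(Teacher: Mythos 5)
Your proposal is correct and follows the same basic strategy as the paper: identify the horizontal components $v^{m,n}_{i-2,\parallel}$ via \eqref{x12 expression}, use the radial invariance $\mu(x,\vb)=\mu(x,v)$ from Lemma \ref{conservative field}, extract polynomial decay $e^{-v_3^2/2}\lesssim(\tb^{i-2})^{-\A}$ from the upper bound in \eqref{tb estimate} for the vertical part, and control the lattice sum over $(m,n)$. The meaningful difference is structural: the paper partitions $(m,n)$ into four rectangular regions (split at $|m|,|n|\lessgtr(\tb^{i-2})^2$ for $\tb^{i-2}\geq 1$, or at $2$ for $\tb^{i-2}<1$) and estimates each block separately, whereas you fold everything into a single pointwise lower bound $|v^{m,n}_{i-2,\parallel}|^2 \geq \big((|m|-1)_+^2+(|n|-1)_+^2\big)/(\tb^{i-2})^2$ (valid because $|x^{i-1}_j - x^{i-2}_j|<1$ on $\mathbb{T}$), factor the Maxwellian, and bound the full lattice sum by comparison with a Gaussian integral of variance $(\tb^{i-2})^2$ in one step. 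Your route is cleaner and in fact delivers the slightly sharper exponent $(\tb^{i-2})^{2-\A}$ in Case 1 before relaxing to $(\tb^{i-2})^{4-\A}$ via $\tb^{i-2}\geq 1$, while the paper's coarser block count loses the horizontal Gaussian decay in the inner block and lands directly on $(\tb^{i-2})^{4-\A}$. For Case 2 both arguments extract the same factor $e^{-1/(2(\tb^{i-2})^2)}$ from the tail $\max(|m|,|n|)\geq 2$; your observation that $(|m|-1)_+^2+(|n|-1)_+^2\geq 1$ there and $1/(\tb^{i-2})^2\geq 1$ makes the remaining series summable uniformly is exactly the paper's Taylor-expansion step in disguise. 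In short: same mechanism, more economical bookkeeping on your end.
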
 

\begin{proof}

Here, for the sake of simplicity, we have abused the notations temporarily: 
$$
x^i = (x^i_1, x^i_2, x^i_3)= (x^i_\parallel, x^i_3), \ 
v^{m, n}_{i, \mathbf{b}} = (v^{m, n}_{i, \mathbf{b}_1}, v^{m, n}_{i, \mathbf{b}_2}, v^{m, n}_{i, \mathbf{b}_3}) = (v^{m, n}_{i, \mathbf{b}_\parallel}, v^{m, n}_{i, \mathbf{b}_3}).
$$
To estimate $v^{m, n}_{i-2, \mathbf{b}_\parallel}$, we recall \eqref{x12 expression}, and get
\Be \label{vmnb12 first estimate}
\begin{split}
& |v^{m, n}_{i-2, \mathbf{b}_1}| 
= \frac{|x^{i-1}_1 + m - x^{i-2}_1|}{\tb^{i-2}},
\ \ |v^{m, n}_{i-2, \mathbf{b}_2}| = \frac{|x^{i-1}_2 + n - x^{i-2}_2|}{\tb^{i-2}}.
\end{split}
\Ee
Now we split the length of $\tb^{i-2}$ into two cases: 

\medskip

\textbf{Case 1:} $\tb^{i-2} \geq 1$. From \eqref{vmnb12 first estimate}, for $|m| \geq (\tb^{i-2})^2$, we bound 
\Be \notag
\frac{|x^{i-1}_1 + m - x^{i-2}_1|}{\tb^{i-2}}
\gtrsim \frac{|m|}{2 \tb^{i-2}}.
\Ee
Similarly, for $|n| \geq (\tb^{i-2})^2$, we bound
\Be \notag
\frac{|x^{i-1}_2 + n - x^{i-2}_2|}{\tb^{i-2}}
\gtrsim \frac{|n|}{2 \tb^{i-2}}.
\Ee
For $|m| < (\tb^{i-2})^2$, we bound $|v^{m, n}_{i-2, \mathbf{b}_1}| \geq 0$, and for $|n| < (\tb^{i-2})^2$, we bound $|v^{m, n}_{i-2, \mathbf{b}_2}| \geq 0$. In order to derive \eqref{t>1,i-2}, we divide $\{v^{m, n}_{i-2, \mathbf{b}}\}_{m, n \in \mathbb{Z}}$ into four parts.

\begin{enumerate}
\item[(a)] For $|m| < (\tb^{i-2})^2$ and $|n| < (\tb^{i-2})^2$, we bound
$a^{\frac{1}{2} |v^{m, n}_{i-2, \mathbf{b}_3}|^2} 
\gtrsim \tb^{i-2}$ in \eqref{tb estimate}. Therefore, we have
\Be \label{t>1m<n<}
\sum\limits_{|m| < (\tb^{i-2})^2, |n| < (\tb^{i-2})^2} \mu (x^{i-1}, v^{m, n}_{i-2, \mathbf{b}}) 
\lesssim (\tb^{i-2})^4 e^{-\frac{1}{2} |v^{m, n}_{i-2, \mathbf{b}}|^2}
\lesssim (\tb^{i-2})^4 (\tb^{i-2})^{- \A} 
= |\tb^{i-2}|^{4 - \A}.
\Ee

\item[(b)] For $|m| < (\tb^{i-2})^2$ and $|n| \geq (\tb^{i-2})^2$, we bound
$|v^{m, n}_{i-2, \mathbf{b}_2}| \gtrsim \frac{|n|}{2 \tb^{i-2}}$. Thus, we have 
\Be \label{t>1m<n>}
\begin{split}
 \sum\limits_{|m| < (\tb^{i-2})^2, |n| \geq (\tb^{i-2})^2} \mu (x^{i-1}, v^{m, n}_{i-2, \mathbf{b}})
& \lesssim \sum\limits_{|m| < (\tb^{i-2})^2} \mu (x^{i-1}, (0, \frac{|n|}{2 \tb^{i-2}}, v^{m, n}_{i-2, \mathbf{b}_3}))
\\& \lesssim (\tb^{i-2})^2 (\tb^{i-2})^{- \A}  \sum\limits^{\infty}_{n=0} \mu (x^{i-1}, \frac{|n|}{2 \tb^{i-2}})
\\& \leq (\tb^{i-2})^2 (\tb^{i-2})^{- \A}  
(1 - e^{-\frac{1}{8 (\tb^{i-2})^2}})^{-1} 
\\& \lesssim (\tb^{i-2})^4 (\tb^{i-2})^{- \A} 
= |\tb^{i-2}|^{4 - \A},
\end{split}
\Ee
where the last inequality holds from the Taylor expansion. 

\item[(c)] For $|m| \geq (\tb^{i-2})^2$ and $|n| < (\tb^{i-2})^2$ case, we bound
$|v^{m, n}_{i-2, \mathbf{b}_1}| \gtrsim \frac{|m|}{2 \tb^{i-2}}$. Similar as in \eqref{t>1m<n>}, we get
\Be \label{t>1m>n<}
\sum\limits_{|m| \geq (\tb^{i-2})^2, |n| < (\tb^{i-2})^2} \mu (x^{i-1}, v^{m, n}_{i-2, \mathbf{b}}) \lesssim |\tb^{i-2}|^{4 - \A}.
\Ee

\item[(d)] For $|m| \geq (\tb^{i-2})^2$ and $|n| \geq (\tb^{i-2})^2$, we use two lower bounds
$|v^{m, n}_{i-2, \mathbf{b}_2}| \gtrsim \frac{|n|}{2 \tb^{i-2}}$, $|v^{m, n}_{i-2, \mathbf{b}_1}| \gtrsim \frac{|m|}{2 \tb^{i-2}}$. Then, we derive that
\Be \label{t>1m>n>}
\begin{split}
\sum\limits_{|m| \geq (\tb^{i-2})^2, |n| \geq (\tb^{i-2})^2} \mu (x^{i-1}, v^{m, n}_{i-2, \mathbf{b}}) 
& \lesssim \sum\limits^{\infty}_{m, n = 0} \mu (x^{i-1}, ( \frac{|m|}{2 \tb^{i-2}}, \frac{|n|}{2 \tb^{i-2}}, v^{m, n}_{i-2, \mathbf{b}_3}))
\\& \lesssim (\tb^{i-2})^{- \A}  \sum\limits^{\infty}_{n=0} \mu (x^{i-1}, \frac{|n|}{2 \tb^{i-2}})
(1 - e^{-\frac{1}{8 (\tb^{i-2})^2}})^{-1} 
\\& \lesssim (\tb^{i-2})^{- \A} 
(1 - e^{-\frac{1}{8 (\tb^{i-2})^2}})^{-2} 
\lesssim (\tb^{i-2})^4 (\tb^{i-2})^{- \A} 
= |\tb^{i-2}|^{4 - \A}.
\end{split}
\Ee
\end{enumerate}
From \eqref{t>1m<n<}, \eqref{t>1m<n>}, \eqref{t>1m>n<} and \eqref{t>1m>n>}, we conclude that for $\tb^{i-2} \geq 1$,
\Be \notag
\sum\limits_{m, n \in \mathbb{Z}} \mu (x^{i-1}, v^{m, n}_{i-2, \mathbf{b}}) 
\lesssim |\tb^{i-2}|^{4 - \A}.
\Ee

\medskip

\textbf{Case 2:} $0 \leq \tb^{i-2} < 1$.
In this case $\tb^{i-2}$ is small, for $|m| \geq 2$ and $|n| \geq 2$, we bound \eqref{vmnb12 first estimate} as 
\Be \notag
\begin{split}
& \frac{|x^{i-1}_1 + m - x^{i-2}_1|}{\tb^{i-2}} \gtrsim \frac{|m|}{2 \tb^{i-2}}, \ \ 
\frac{|x^{i-1}_2 + n - x^{i-2}_2|}{\tb^{i-2}}
\gtrsim \frac{|n|}{2 \tb^{i-2}}.
\end{split}
\Ee
For $|m| < 2$, we bound $|v^{m, n}_{i-2, \mathbf{b}_1}| \geq 0$, and for $|n| < 2$, we bound $|v^{m, n}_{i-2, \mathbf{b}_2}| \geq 0$.
To obtain \eqref{t<1,i-2}, we again divide $\{v^{m, n}_{i-2, \mathbf{b}}\}_{m, n \in \mathbb{Z}}$ into four parts.

\begin{enumerate}
\item[(a)] For $|m| < 2$ and $|n| < 2$, we keep the following five terms summation: 
\Be \label{t<1m<n<}
\sum\limits_{|m| < 2, |n| < 2} \mu (x^{i-1}, v^{m, n}_{i-2, \mathbf{b}}).
\Ee

\item[(b)] For $|m| < 2$ and $|n| \geq 2$, we bound
$|v^{m, n}_{i-2, \mathbf{b}_2}| \gtrsim \frac{|n|}{2 \tb^{i-2}}$. Thus, we have 
\Be \label{t<1m<n>}
\begin{split}
\sum\limits_{|m| < 2, |n| \geq 2} \mu (x^{i-1}, v^{m, n}_{i-2, \mathbf{b}})
& \lesssim \sum\limits_{|m| < 2, |n| \geq 2} \mu (x^{i-1}, (0, \frac{|n|}{2 \tb^{i-2}}, v^{m, n}_{i-2, \mathbf{b}_3}))
\\& \lesssim \sum\limits^{\infty}_{n=2} \mu (x^{i-1}, \frac{|n|}{2 \tb^{i-2}})
\lesssim \sum\limits^{\infty}_{n = 2} e^{-\frac{n^2}{8 (\tb^{i-2})^2}}
\\& \leq e^{-\frac{1}{2 (\tb^{i-2})^2}}
(1 - e^{-\frac{1}{8 (\tb^{i-2})^2}})^{-1} 
\lesssim e^{-\frac{1}{2 (\tb^{i-2})^2}},
\end{split}
\Ee
where the last inequality holds from $0 \leq \tb^{i-2} < 1$. 

\item[(c)] For $|m| \geq 2$ and $|n| < 2$ case, we bound
$|v^{m, n}_{i-2, \mathbf{b}_1}| \gtrsim \frac{|m|}{2 \tb^{i-2}}$. Similar as in \eqref{t<1m<n>}, we get
\Be \label{t<1m>n<}
\sum\limits_{|m| \geq 2, |n| < 2} \mu (x^{i-1}, v^{m, n}_{i-2, \mathbf{b}}) 
\lesssim e^{-\frac{1}{2 (\tb^{i-2})^2}}.
\Ee

\item[(d)] For $|m| \geq 2$ and $|n| \geq 2$, we bound
$|v^{m, n}_{i-2, \mathbf{b}_2}| \gtrsim \frac{|n|}{2 \tb^{i-2}}$, $|v^{m, n}_{i-2, \mathbf{b}_1}| \gtrsim \frac{|m|}{2 \tb^{i-2}}$ and we derive
\Be \label{t<1m>n>}
\begin{split}
\sum\limits_{|m| \geq 2, |n| \geq 2} \mu (x^{i-1}, v^{m, n}_{i-2, \mathbf{b}}) 
& \lesssim \sum\limits^{\infty}_{m, n = 2} \mu (x^{i-1}, ( \frac{|m|}{2 \tb^{i-2}}, \frac{|n|}{2 \tb^{i-2}}, v^{m, n}_{i-2, \mathbf{b}_3}))
\\& \lesssim \sum\limits^{\infty}_{n=2} \mu (x^{i-1}, \frac{|n|}{2 \tb^{i-2}})
e^{-\frac{1}{2 (\tb^{i-2})^2}}
\lesssim e^{-\frac{1}{2 (\tb^{i-2})^2}}.
\end{split}
\Ee
\end{enumerate}
From \eqref{t<1m<n<}, \eqref{t<1m<n>}, \eqref{t<1m>n<} and \eqref{t<1m>n>}, we conclude that for $0 \leq \tb^{i-2} < 1$,
\Be \notag
\sum\limits_{m, n \in \mathbb{Z}} \mu (x^{i-1}, v^{m, n}_{i-2, \mathbf{b}})
\lesssim 
\sum\limits_{|m| < 2, |n| < 2} \mu (x^{i-1}, v^{m, n}_{i-2, \mathbf{b}}) +
e^{-\frac{1}{2 (\tb^{i-2})^2}},
\Ee
so we prove \eqref{t>1,i-2} and \eqref{t<1,i-2}.
\end{proof}

%%%%%%%%%%%%%%%%%%%%%%%%%%

\section{Weighted \texorpdfstring{$L^1$}{L1}-Estimates}
\label{sec: L1 estimate}

The main purpose of this section is to prove Theorem \ref{theorem_1}, in which we do $L^1$-estimates on fluctuations. Then we show the existence and uniqueness of the stationary solution.

%\begin{theorem} \label{theorem_1}
%Consider the initial data $f_0 (x,v)$, such that 
%\be \notag
%\iint_{\O \times \R^3} f_0 (x,v) \dd x \dd v = 0, \ \
%\| e^{ \frac{1}{2} |v|^2+ \Phi (x) } f_0\|_{L^\infty_{x,v}} < \infty.
%\ee 
%Then there exists a unique global-in-time solution $f(t,x,v)$ to \eqref{equation for F} and \eqref{diff_F}, such that
%\Be \label{est:theorem_1}   
%\| f(t) \|_{L^1_{x,v}} 
%\lesssim 
%(\ln\langle t\rangle )^{\A - 6 - \frac{\delta}{2}} \langle t\rangle^{6 - \A}
%\times \| e^{ \frac{1}{2} |v|^2+ \Phi (x) } f_0\|_{L^\infty_{x,v}},
%\Ee
%where $0 < \delta < 1$.
%\end{theorem}

\subsection{\texorpdfstring{$f (t, x, v)$}{f(t,x,v)} via Stochastic Cycles}

{\color{black}
The main propose of this section is to show Lemma \ref{lemma:energy}, where we control $\| \varphi(\tf) f(t) \|_{L^1_{x,v}}$ under some weight function $\varphi$.
To prove Lemma \ref{lemma:energy}, we first express $f (t, x, v)$ with the stochastic cycles in Lemma \ref{sto_cycle}, then we do some energy estimates in Lemma \ref{lem:energy estimate on f}.
}

\begin{lemma} \label{sto_cycle}

{\color{black}
For any integer $k \geq 2$, suppose
}
%Suppose 
$f (t, x, v)$ solves \eqref{equation for F} and \eqref{diff_F}, and $t_* \leq t$, then we have
\begin{align}
    f (t, x, v) = & \mathbf{1}_{t^1 < t_*}
    f (t_*, X(t_*; t, x, v), V(t_*; t, x, v)) \label{expand_h1} 
    \\&  + \mu (x^1, \vb) \sum\limits^{k-1}_{i=1}  \int_{\prod_{j=1}^{i} \mathcal{V}_j} \Big\{   \mathbf{1}_{t^{i+1}<t_* \leq t^{i }} f (t_*, X(t_*; t^i, x^i, v^i), V(t_*; t^i, x^i, v^i))  \Big\}
      \dd  \Sigma_{i}
\label{expand_h2}
    \\& + \mu (x^1, \vb) \int_{\prod_{j=1}^{k } \mathcal{V}_j}   
    \mathbf{1}_{t^{k } \geq t_* }
    f (t^{k}, x^{k }, v^{k })
     \dd  \Sigma_{k},
\label{expand_h3}
\end{align} 
where 
$\dd  {\Sigma}_{i} 
:= \frac{ \dd \sigma_{i}}{ \mu (x^{i+1}, v^{i})} \dd \sigma_{i-1} \cdots  \dd \sigma_1$, with $\dd \sigma_j = \mu (x^{j+1}, v^{j}) \{ n(x^j) \cdot v^j \} \dd v^j$ in \eqref{def:sigma measure}. Here, $(X,V)$ solves \eqref{characteristics}.

\end{lemma}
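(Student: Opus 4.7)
The plan is to reduce to Lemma \ref{sto_cycle_1} by linearity. The key observation is that $\tilde\mu(x,v) = \frac{c_\mathfrak{m}}{2\pi} e^{-(|v|^2/2 + \Phi(x))}$ is itself a solution of \eqref{equation for F} satisfying the boundary condition \eqref{diff_F}: the free-transport equation is satisfied because $v\cdot\nabla_x \tilde\mu = -\nabla\Phi\cdot v\,\tilde\mu = \nabla\Phi\cdot\nabla_v\tilde\mu$, and the boundary condition holds because $\Phi(x)=\log_a(1+x_3)$ vanishes on $\partial\Omega=\{x_3=0\}$, so $\tilde\mu|_{\partial\Omega}(x,v)=c_\mathfrak{m}\,\mu(x,v)$ and the outgoing-flux normalization of $\mu$ returns exactly $c_\mathfrak{m}$. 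Since $F$ also solves \eqref{equation for F} and \eqref{diff_F}, the fluctuation $f=F-\tilde\mu$ inherits both by linearity. Applying Lemma \ref{sto_cycle_1} to $f$ yields \eqref{expand_h1}--\eqref{expand_h3} directly.

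If instead one prefers a direct derivation, the argument proceeds by induction on $k$ following the pattern of Lemma~2 in \cite{JK2}. The starting point is that $f$ is constant along the characteristics \eqref{characteristics}, so between two consecutive boundary hits one may write $f(t,x,v)=f(t^1,x^1,\vb)$ whenever $t^1\ge t_*$, and $f(t,x,v)=f(t_*,X(t_*;t,x,v),V(t_*;t,x,v))$ otherwise (the latter gives the term \eqref{expand_h1}). At a boundary hit, the diffuse reflection condition \eqref{diff_F} expresses $f(t^1,x^1,\vb)$ as
\[
f(t^1,x^1,\vb)=\mu(x^1,\vb)\int_{\mathcal{V}_1} f(t^1,x^1,v^1)\{n(x^1)\cdot v^1\}\dd v^1,
\]
and Lemma \ref{conservative field} together with the radial symmetry of $\mu$ allows us to replace $\mu(x^1,\vb)$ by $\mu(x^1,v^1)$ and assemble the measure $\dd\sigma_1$. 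Applying the same dichotomy to each factor $f(t^i,x^i,v^i)$ inside the iterated integral produces \eqref{expand_h2} for $i=1,\dots,k-1$ and leaves the remainder \eqref{expand_h3} carrying $\mathbf{1}_{t^k\ge t_*}$.

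The main bookkeeping subtlety is to verify that the nested Maxwellian factors $\mu(x^{j+1},\vb^j)$ telescope correctly into $\dd\Sigma_k=\frac{\dd\sigma_k}{\mu(x^{k+1},v^k)}\dd\sigma_{k-1}\cdots\dd\sigma_1$. This follows from Lemma \ref{conservative field}: $|\vb^j|=|v^j|$ gives $\mu(x^{j+1},\vb^j)=\mu(x^{j+1},v^j)$, so at every iteration the outgoing Maxwellian introduced by \eqref{diff_F} cancels against the denominator in $\dd\Sigma_j$ except at the outermost one. No further obstacle is anticipated; the proof is essentially a transcription of Lemma \ref{sto_cycle_1}, and the one-line reference to \cite{JK2} given by the authors is thereby justified.
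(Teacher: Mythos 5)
Your proposal is correct and matches the paper's approach, which is simply ``Following the similar steps in Lemma \ref{sto_cycle_1} and Remark \ref{probability}.'' Your first route — checking that $\tilde\mu$ itself satisfies \eqref{equation for F} and \eqref{diff_F} (using $\Phi|_{\p\O}=0$ so $\tilde\mu|_{\p\O}=c_{\mathfrak m}\mu$), so that $f=F-\tilde\mu$ does too by linearity, and then applying Lemma \ref{sto_cycle_1} to $f$ together with the replacement $\mu(x^{j+1},\vb^j)=\mu(x^{j+1},v^j)$ from Lemma \ref{conservative field} — is exactly the clean way to make the paper's one-line justification precise, and your direct-derivation route reproduces the underlying iteration from \cite{JK2} that the one-liner alludes to.
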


\begin{proof}
Following the similar steps in Lemma \ref{sto_cycle_1} and Remark \ref{probability}, we can obtain this Lemma.
\end{proof}

\begin{lemma} \label{lemma:energy}

%Suppose $\varphi (\tau ) \geq0$, $\varphi^\prime \geq0$, and 

{\color{black}
Given a function $\varphi: [0, \infty) \to \R$, suppose $\varphi$ satisfies that for any $\tau \geq 0$, $\varphi (\tau ) \geq0$, $\varphi^\prime \geq0$, and
}
\Be \label{cond:varphi} 
\int_1^\infty \tau^{3 - \A} \varphi(\tau) \dd \tau < \infty. 
\Ee
Suppose $f$ solves \eqref{equation for F} and \eqref{diff_F}, there exists $C>0$ independent of $t_*$, $t$, such that for all $0 \leq t_* \leq t$, 
\Be \label{energy_varphi}
\begin{split}
& \ \ \ \ \| \varphi(\tf) f(t) \|_{L^1_{x,v}}
  + \int^{t}_{t_*}
  \| \varphi^\prime(\tf) f  \|_{L^1_{x,v}} \dd s
  +  \int_{t_*}^{t} | \varphi(\tf) f|_{L^1_{\gamma_+}} \dd s
\\& \leq \| \varphi(\tf) f(t_*) \|_{L^1_{x,v}} +
 C (t - t_* + 1) \| f(t_*) \|_{L^1_{x,v}} + \frac{1}{4}  \int^{t}_{t_*} |f |_{L^1_{\gamma_+}} \dd s.
\end{split}
\Ee 
\end{lemma}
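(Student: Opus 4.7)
The strategy is to derive an energy identity for $\varphi(\tf)|f|$ from the Vlasov equation and then dominate the incoming-boundary flux via the diffuse-reflection BC combined with the stochastic cycle machinery. Since $\tf$ decreases at unit rate along forward characteristics, $(\partial_t + v\cdot\nabla_x - \nabla\Phi\cdot\nabla_v)\varphi(\tf) = -\varphi'(\tf)$. Coupled with the fact that $|f|$ is transported by the same operator in the distributional sense, this produces
\[
(\partial_t + v\cdot\nabla_x - \nabla\Phi\cdot\nabla_v)(\varphi(\tf)|f|) = -\varphi'(\tf)|f|.
\]
Integrating over $(s,x,v) \in (t_*,t)\times\O\times\R^3$ using the divergence theorem in $x$ (the $v$-integration has no boundary term since $\nabla_v\cdot\nabla\Phi = 0$), and splitting the spatial boundary into $\gamma_\pm$ pieces, yields the exact identity
\begin{multline*}
\|\varphi(\tf)f(t)\|_{L^1_{x,v}} + \int_{t_*}^t\|\varphi'(\tf)f\|_{L^1_{x,v}}\,\d s + \int_{t_*}^t|\varphi(\tf)f|_{L^1_{\gamma_+}}\,\d s \\ = \|\varphi(\tf)f(t_*)\|_{L^1_{x,v}} + \int_{t_*}^t|\varphi(\tf)f|_{L^1_{\gamma_-}}\,\d s.
\end{multline*}
So the problem reduces to dominating the $\gamma_-$ term by $C(t-t_*+1)\|f(t_*)\|_{L^1_{x,v}}+\tfrac14\int_{t_*}^t|f|_{L^1_{\gamma_+}}\,\d s$.

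Next, I apply the diffuse-reflection BC on $\gamma_-$ (inherited by $f=F-\tilde\mu$ because $\tilde\mu|_{\p\O}=c_{\mathfrak m}\mu$ already satisfies the reflection identity): $|f(s,x,v)|\leq\mu(v)\int_{n\cdot v^1>0}|f(s,x,v^1)||n\cdot v^1|\,\d v^1$. Since $\Phi$ depends only on $x_3$ and $\p\O$ is flat and horizontally periodic, the restriction $\tf|_{\gamma_-}$ depends only on $v_3$, so the integration against $\varphi(\tf(v))|n\cdot v|\,\d v$ factorizes out a constant $A := \int_{v_3>0}\varphi(\tf(v_3))\mu(v)v_3\,\d v$, giving $|\varphi(\tf)f|_{L^1_{\gamma_-(x)}}\leq A\cdot|f|_{L^1_{\gamma_+(x)}}$ pointwise on $\p\O$. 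The change of variable $v_3\mapsto\tau=\tf(v_3)$ via the estimates \eqref{tb estimate}--\eqref{tb v_3 diff second expression} converts $e^{-v_3^2/2}v_3\,\d v_3$ into a measure of order $\tau^{-\A-1}\,\d\tau$, so $A\lesssim\int_0^\infty\varphi(\tau)\tau^{-\A-1}\,\d\tau$, which is finite by the hypothesis since $\tau^{-\A-1}\leq\tau^{3-\A}$ for $\tau\geq1$.

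To refine the prefactor from $A$ down to $\tfrac14$ while extracting the $(t-t_*+1)\|f(t_*)\|_{L^1}$ piece, I split the $v_3$-integration at a threshold $R=R(\varphi,\A)$ chosen large enough that $\int_{v_3>R}\varphi(\tf)\mu v_3\,\d v\leq\tfrac14$; the high-velocity tail is absorbed directly into the $\tfrac14$-term on the right-hand side. On the bounded complement $v_3\in(0,R]$, the weight $\varphi(\tf(v_3))$ is uniformly bounded but the outgoing-boundary integral must still be traded for an interior $L^1$ quantity. For this I invoke the stochastic cycle expansion of Lemma \ref{sto_cycle} applied to $|f(s,x,v^1)|$ for $v^1$ with $n\cdot v^1>0$: the initial-data terms (where some backward cycle time $t^i$ drops below $t_*$) convert into $\|f(t_*)\|_{L^1_{x,v}}$ through the change-of-variable formulas \eqref{COV}--\eqref{COV+} of Lemma \ref{lem:COV}, and the $s$-integration over $[t_*,t]$ combined with typical cycle length $O(1)$ produces precisely the factor $(t-t_*+1)$; the residual $k$-bounce tail $\mathbf 1_{t^k\geq t_*}f(t^k,x^k,v^k)$ is absorbed back into the $\tfrac14$-quota for $k$ sufficiently large, using Lemma \ref{lem: sum of mu} to sum over periodic images of the Maxwellians (contributing weights of order $|\tb|^{4-\A}$). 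The main obstacle is this last step: the periodic horizontal summation produces an extra $\tau^4$ factor, which must balance the $\tau^{-\A}$ Maxwellian decay and the Jacobian $\tau^{-1}$ of Proposition \ref{prop:mapV} to match precisely the exponent $\tau^{3-\A}$ in the hypothesis, and the cycle counting over $[t_*,t]$ must yield the sharp $(t-t_*+1)$ rather than any higher power.
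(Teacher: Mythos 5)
Your proposal follows the paper's overall structure for the first part: you derive the same exact energy identity from $[\partial_t+v\cdot\nabla_x-\nabla\Phi\cdot\nabla_v](\varphi(\tf)|f|)=-\varphi'(\tf)|f|$, reduce the $\gamma_-$ flux to the $\gamma_+$ one via the diffuse boundary condition, and check that the weight factor $\int_{n\cdot v<0}\varphi(\tf)\mu\,|n\cdot v|\,\dd v$ is finite. Your way of getting the finiteness is in fact cleaner and sharper than the paper's: by factorizing the Gaussian in $(v_1,v_2)$ you reduce to a genuinely one-dimensional integral in $v_3$, and the change of variable $v_3\mapsto\tf(v_3)$ gives a density of order $\tf^{-\A-1}\,\dd\tf$ for $\tf\gtrsim 1$, whereas the paper pushes $v\mapsto(\xf,\tf)$ through Proposition~\ref{prop:mapV} and then invokes Lemma~\ref{lem: sum of mu} to handle the periodic images, landing on the looser exponent $\tf^{3-\A}$; since $\tau^{-\A-1}\le\tau^{3-\A}$ for $\tau\ge 1$, your hypothesis is of course still satisfied, and the argument is somewhat more elementary.

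The gap is in the final step, where you must trade $\int_{t_*}^t|f|_{L^1_{\gamma_+}}\,\dd s$ for $C(t-t_*+1)\|f(t_*)\|_{L^1_{x,v}}$ plus a small multiple of the same integral. The paper handles this through the already-established trace estimate \eqref{trace} in Lemma~\ref{lem:energy estimate on f}, which is a one-step Duhamel/backtracking argument over $\delta$-length time slices: each slice either backtracks to the start of the slice (giving $\|f\|_{L^1}$ via \eqref{COV}) or grazes the boundary within time $\delta$, which costs only $O(\delta^2)$ of the $\gamma_+$ trace. Summing over $\lceil (t-t_*)/\delta\rceil$ slices gives the $(t-t_*+1)$ prefactor after $\delta$ is fixed. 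Your proposal instead attempts a full $k$-bounce stochastic cycle expansion and justifies the $(t-t_*+1)$ prefactor by ``typical cycle length $O(1)$,'' which does not hold here: cycle lengths $\tb$ range from $O(|v_3|)$ near zero to $a^{|v_3|^2/2}$ for large $v_3$ by \eqref{tb estimate}, so there is no uniform bound from which the linear factor falls out. Moreover, absorbing the residual $k$-bounce tail ``back into the $\tfrac14$-quota'' conflicts with having already spent that entire quota on the $v_3>R$ split, and the natural smallness mechanism for the tail, Lemma~\ref{lem:small_largek}, is a pointwise (sup) bound that does not translate directly into the needed $L^1$ absorption without further work. The intended step is simply to cite \eqref{trace} rather than rebuild a cycle expansion with $L^\infty$-style tail control.
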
 

%For the proof of Lemma \ref{lemma:energy}, We shall start it from Lemma \ref{lem:energy estimate on f}, the energy estimate. 

{\color{black}
For the proof of Lemma \ref{lemma:energy}, we shall start it from the energy estimate, Lemma \ref{lem:energy estimate on f}.
}

\begin{lemma}[\cite{JK2}]
\label{lem:energy estimate on f}
Suppose $f$ solves \eqref{equation for F} and \eqref{diff_F}, then for $0 \leq t_* \leq t$ with $0 < \delta < \min (1, t-t_*)$,
\begin{align}
& \ \ \ \ \| f(t) \|_{L^1_{x,v}}  \leq \| f(t_*) \|_{L^1_{x,v}}, \label{maximum} 
\\& \int^{t}_{t_*}  |f(s )|_{L^1_{\gamma_+}} \dd s 
 \leq \Big \lceil \frac{t-t_*}{\delta}   \Big \rceil \| f(t_*) \|_{L^1_{x,v}}
+ O(\delta^2) \int^{t}_{t_*}  |f(s )|_{L^1_{\gamma_+}} \dd s, \label{trace}
\end{align}
and if $f_0$ is non-negative, so is $f(t, x, v)$ for all $(t, x, v) \in \R_{+} \times \Omega \times \R^3$.
\end{lemma}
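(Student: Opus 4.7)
Since the lemma is attributed to \cite{JK2} and the present setting differs only through the characteristic ODE, the plan is to reproduce the argument there with the specific bounds \eqref{tb estimate}, \eqref{small tb estimate} substituted in. The three claims are handled in order.

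For the $L^1$ contraction \eqref{maximum}, the key observation is that $\tilde\mu$ satisfies both \eqref{equation for F} and \eqref{diff_F}, so $f$ satisfies the linear Vlasov equation and the homogeneous diffuse BC of the same form $f|_{\gamma_-}=\mu(x,v)\int_{n\cdot v'>0} f(\cdot,x,v')\{n\cdot v'\}\dd v'$. A standard renormalization of $|f|$ (through $\sqrt{f^2+\eta^2}-\eta$) followed by Green's identity in phase space and $\eta\to 0$ gives
\[
\|f(t)\|_{L^1_{x,v}}-\|f(t_*)\|_{L^1_{x,v}}=-\int_{t_*}^{t}\!\big(|f|_{L^1_{\gamma_+}}-|f|_{L^1_{\gamma_-}}\big)\dd s.
\]
The triangle inequality applied to the diffuse BC combined with $\int_{n\cdot v<0}\mu|n\cdot v|\dd v=1$ yields $|f|_{L^1_{\gamma_-}}\leq|f|_{L^1_{\gamma_+}}$, hence the claim. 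Non-negativity is then read off from Lemma \ref{sto_cycle} with $t_*=0$: when $f_0\geq 0$, every summand in \eqref{expand_h1}--\eqref{expand_h2} is a non-negative average against the non-negative measure $\mu\{n\cdot v\}\dd v$, and the remainder \eqref{expand_h3} vanishes as $k\to\infty$ since the backward iterated times $t^k$ become negative almost surely.

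For the trace bound \eqref{trace}, I would partition $[t_*,t]$ into $N=\lceil(t-t_*)/\delta\rceil$ intervals $I_k$ of length $\delta$, and on each $I_k$ split $\gamma_+$ according to $\tb(x,v)\gtreqless\delta$. On $\{\tb\geq\delta\}$, conservation of $f$ along characteristics gives $\delta\cdot|f(s,x,v)|\leq\int_0^\delta|f(s-\tau,X(s-\tau),V(s-\tau))|\dd\tau$; integrating in $s\in I_k$, applying the change of variables \eqref{COV}, and invoking \eqref{maximum} yields
\[
\int_{I_k}\!\!\int_{\gamma_+\cap\{\tb\geq\delta\}}\!\!|f(s,x,v)|\,|n\cdot v|\dd v\dd S_x\dd s\leq\|f(t_*)\|_{L^1_{x,v}},
\]
which sums to $N\|f(t_*)\|_{L^1_{x,v}}$. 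On $\{\tb<\delta\}$ I would invoke the diffuse BC to bound $|f(s-\tb,\xb,\vb)|\leq\mu(\xb,\vb)\int_{n\cdot v'>0}|f(s-\tb,\xb,v')|\{n\cdot v'\}\dd v'$, change variables via \eqref{COV_bdry} and \eqref{jacob:mapV}; the estimate \eqref{small tb estimate} forces $|v_3|\lesssim\delta$ on this set, giving a velocity-measure factor of $O(\delta)$ which, combined with the time length $\delta$, produces the absorbed error $O(\delta^2)\int_{t_*}^{t}|f|_{L^1_{\gamma_+}}\dd s$.

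The main obstacle will be quantifying the $O(\delta^2)$ factor in the small-$\tb$ case: one has to match the Jacobian \eqref{jacob:mapV} against both the Maxwellian weight $\mu$ and the $|v_3|\lesssim\delta$ constraint inherited from \eqref{small tb estimate} so that the grazing contribution really is quadratically small and can be absorbed into the left-hand side. The first two claims, by contrast, are structural and independent of the specific form of $\Phi$, so they go through verbatim from \cite{JK2}.
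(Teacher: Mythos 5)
Your treatment of \eqref{maximum} and \eqref{trace} is essentially the paper's argument: the $L^1$-identity from Green's formula, the triangle inequality at the boundary, the time-partition into $\delta$-blocks with the split $\tb \gtrless \delta$, and the quadratic smallness $O(\delta^2)$ from grazing velocities. One small clarification on \eqref{trace}: the $O(\delta^2)$ factor does not come from the Jacobian \eqref{jacob:mapV}. After applying the boundary change of variables \eqref{COV_bdry} and inserting the diffuse reflection, the relevant velocity integral is
\[
\int_{n\cdot v<0}\mathbf{1}_{\delta>\tf}\,\mu(y,v)\,|n\cdot v|\,\dd v
\lesssim \int_{|v_3|\lesssim\delta}e^{-v_3^2/2}\,|v_3|\,\dd v_3\lesssim\delta^2,
\]
where one factor of $\delta$ comes from the length of the integration domain $|v_3|\lesssim\delta$ (via \eqref{small tb estimate}) and the second from the boundary measure weight $|n\cdot v|=|v_3|\lesssim\delta$. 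No volume Jacobian is needed.

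The genuine gap is in your positivity argument. You propose to read non-negativity off the stochastic cycle representation \eqref{expand_h1}--\eqref{expand_h3}, claiming that the remainder \eqref{expand_h3} vanishes as $k\to\infty$. That remainder is
\[
\mu(x^1,\vb)\int_{\prod_{j=1}^{k}\mathcal V_j}\mathbf{1}_{t^k\ge 0}\,f(t^k,x^k,v^k)\,\dd\Sigma_k,
\]
and the innermost measure $\frac{\dd\sigma_k}{\mu(x^{k+1},v^k)}=\{n(x^k)\cdot v^k\}\dd v^k$ is not a probability measure, so the smallness of the event $\{t^k\ge 0\}$ (Lemma \ref{lem:small_largek}) alone does not kill the term. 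One needs a quantitative a priori bound such as $\sup_s\|w'f(s)\|_{L^\infty}<\infty$ to control $f(t^k,x^k,v^k)$ pointwise. But that bound is \eqref{theorem_infty_1}, whose proof in Section \ref{sec: exponential moments} invokes the $L^1$-decay of Theorem \ref{theorem_1}, which in turn rests on the present lemma. So the proposed route is circular at the stage where the lemma is used. The paper avoids this by a self-contained trick: set $f_-=\tfrac{1}{2}(|f|-f)$, observe that $f_-$ again solves \eqref{equation for F}--\eqref{diff_F} with initial datum $\tfrac{1}{2}(|f_0|-f_0)=0$, and apply the already-proved $L^1$-contraction \eqref{maximum} to conclude $\|f_-(t)\|_{L^1_{x,v}}=0$. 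That argument uses nothing beyond the first part of the lemma, which is why the paper presents it in this order.
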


\begin{proof}

{\color{black}
Since $f (t, x, v)$ solves \eqref{equation for F} and \eqref{diff_F} in the $L^1$ sense, according to \cite[Lemma 2]{B}, $|f (t, x, v)|$ is also a solution to \eqref{equation for F} and \eqref{diff_F}.
}

From \eqref{equation for F} and \eqref{diff_F}, taking integration 
{\color{black} 
on $|f(t,x,v)|$
}
over $(t_*, t) \times \Omega \times \R^3$, 
we derive that
\Be \notag
\| f(t) \|_{L^1_{x,v}} + \int^t_{t*} \int_{\gamma_+} |f| \dd s - \int^t_{t*} \int_{\gamma_-}|f| \dd s = \| f(t_*) \|_{L^1_{x,v}}.
\Ee
Due to the choice of $\mu (x, v)$ in \eqref{diff_F}, for $\forall t > 0$,
\Be \notag
\int_{\gamma_-} |f (t, x, v)| |n(x) \cdot v| \dd S_x \dd v 
= \Big|\int_{\gamma_+} f (t, x, v) \{n(x) \cdot v\} \dd S_x \dd v 
\Big|.
\Ee
Therefore, we have
\Be \notag
\begin{split}
& \ \ \ \ \int^t_{t*} \int_{\gamma_+}|f| \dd s - \int^t_{t*} \int_{\gamma_-}|f| \dd s
\\& = \int^t_{t*} \int_{\gamma_+}|f| \dd s -\int^t_{t*} \Big|\int_{\gamma_+}f\Big| \dd s
\geq  \int^t_{t*} \int_{\gamma_+}|f| \dd s -  \int^t_{t*} \int_{\gamma_+}|f| \dd s =0,
\end{split}
\Ee 
therefore we prove \eqref{maximum}.

Next we work on \eqref{trace}. For $\delta \in (0, t-t_*)$ and $(x,v) \in \gamma_+$,  
{\color{black}
we split the time interval $[t^*, t]$ into some subintervals as follows:
\[
[t^*, t^* + \delta], [t^* + \delta, t^* + 2 \delta], \ldots, [t^* + \lceil \frac{t-t_*-\delta}{\delta} \rceil \delta, t].
\]
Since $f$ is invariant along the characteristic, we backward $f (s,x,v)$ into a new time depending on $s$ and $\tb(x,v)$. Then we do estimates on different cases. 
}
\Be \label{|f|1}
\begin{split}
|f(s,x,v)|
& \leq \underbrace{ \sum\limits^{ \lceil \frac{t-t_*-\delta}{\delta} \rceil }_{k=1} \mathbf{1}_{t_* + k \delta \leq s \leq t_* + (k+1) \delta, \ \delta < \tb(x,v)}
|f(t_* + k \delta, X(t_* + k \delta, s, x, v), V(t_* + k \delta, s, x, v))| }_{\eqref{|f|1}_1} 
\\& \ \ \ \ + \underbrace{ \mathbf{1}_{t_* + \delta \leq s, \ \delta \geq \tb(x,v)}
|f(s- \tb (x,v), \xb (x,v), \vb (x,v))|}_{\eqref{|f|1}_2}
\\& \ \ \ \ + \underbrace{ \mathbf{1}_{s - t_* < \delta, \ s - t_* < \tb(x,v)}
|f(t_*, X(t_*, s, x, v), V(t_*, s, x, v))| }_{\eqref{|f|1}_3} 
\\& \ \ \ \ + \underbrace{ \mathbf{1}_{\tb(x,v) \leq s - t_* < \delta}
|f(s- \tb (x,v), \xb (x,v), \vb (x,v))|}_{\eqref{|f|1}_4},
\end{split} 
\Ee
where $s \in [t_*,t]$. 

First we do estimate on $\eqref{|f|1}_1$.
From \eqref{COV}, \eqref{maximum} and $t_* + \delta \leq s \leq t$ with $\delta < \tb (x, v)$,
\begin{align} 
& \ \ \ \ \int^{t}_{t_*} \int_{\gamma_+} \eqref{|f|1}_1 \dd s \notag
\\& \leq \sum\limits^{ \lceil \frac{t-t_*-\delta}{\delta} \rceil }_{k=1} \int_{\gamma_+}
\int^{ t_* + k \delta + \tb (x, v)}_{ t_* + k \delta}
|f(t_* + k \delta, X(t_* + k \delta, s, x, v), V(t_* + k \delta, s, x, v))| \dd s 
\{n(x) \cdot v\} \dd S_x \dd v \notag 
\\& \leq \sum\limits^{ \lceil \frac{t-t_*-\delta}{\delta} \rceil  }_{k=1} \| f(t_* + k \delta) \|_{L^1_{x,v}}  
\leq \Big \lceil \frac{t-t_*-\delta}{\delta}   \Big \rceil \times \| f(t_*) \|_{L^1_{x,v}}.
\end{align}

Now we consider $\eqref{|f|1}_2$. For $y= \xb(x,v)$ and $s \in [t_*,t]$, we have 
\Be \label{delta tf}
\mathbf{1}_{\delta \geq \tb(x,v)}
= \mathbf{1}_{\delta \geq \tf(y, \vb)}.
\Ee
From \eqref{tb estimate} and \eqref{small tb estimate}, for $(x,v) \in \gamma_{+}$ and $v_3 \lesssim \tb (x, v) < \delta < 1$. Then we get 
\be \notag
\mathbf{1}_{ |v_3| \lesssim \delta} \geq \mathbf{1} _{\delta> \tb (x,v)}.
\ee
%Setting $\vartheta$ as the angle between $v $ and $n(x)$ and $r = |v|$, we have 
{\color{black}
Thus, we compute that
}
\Be \label{estimate on delta}
\begin{split} 
& \ \ \ \ \int_{n(x) \cdot v > 0}
\mathbf{1} _{\delta> \tb (x, v)} \mu (\xb, v) |n(x) \cdot v|\dd v
\\& \lesssim \int_{ |v_3| \lesssim \delta} \mu (\xb, v) | n(x) \cdot v | \dd v  
\\& \leq \int_{ |v_3| \lesssim \delta} e^{- \frac{v_3^2}{2}} |v_3| \dd v_3
\lesssim C \delta^2.
\end{split}
\Ee
From \eqref{COV_bdry}, \eqref{delta tf} and using the Fubini's theorem, we derive
\begin{align} 
\int^{t}_{t_*} \int_{\gamma_+} \eqref{|f|1}_2 \dd s
& = \int_{\gamma_+}
\int^t_{ t_* + \delta}
\mathbf{1}_{\delta \geq \tb(x,v)}
|f(s- \tb(x, v), \xb, \vb)| \dd s 
\{n(x) \cdot v\} \dd S_x \dd v \notag \\
& \leq \int_{\p\O} \int_{n(y) \cdot v<0}
\mathbf{1} _{\delta> \tf(y, v)}
\int^t_{t_*}|f(s, y, v)| \dd s 
|n(y) \cdot v| \dd S_{y} \dd v \notag \\
& \leq \int_{\p\O}
\underbrace{\Big( \int_{n(y) \cdot v<0}
\mathbf{1} _{\delta> \tf(y, v)} \mu (y, v) |n(y) \cdot v|\dd v\Big)}_{\eqref{est:|f|2}_*}
\int^t_{t_*}
\int_{n(y) \cdot v^1>0} 
|f(s,y, v^1)| \{n(y) \cdot v^1\} \dd v^1
 \dd s 
 \dd S_{y} .
 \label{est:|f|2}
\end{align}
From \eqref{estimate on delta}, we derive
\be \notag
\eqref{est:|f|2} \leq O(\delta^2) \int^t_{t_*} \int_{\gamma_+} |f| \dd s.
\ee 
From \eqref{COV}
%, \eqref{maximum} 
and $s < t_* + \tb(x,v)$, we have
\Be \notag
\int^{t}_{t_*} \int_{\gamma_+} \eqref{|f|1}_3 \dd s
\leq \int^{t_* + \tb(x,v)}_{t_*} \int_{\gamma_+} \eqref{|f|1}_3 \dd s 
%= \| f(t_*) \|_{L^1_{x,v}}
\leq \| f(t_*) \|_{L^1_{x,v}}.
\Ee
Again setting $y= \xb(x,v)$ and $s \in [t_*,t]$, we have 
\Be \label{delta tf 2}
\mathbf{1}_{\tb(x,v) \leq s - t_* < \delta}
\leq \mathbf{1}_{\delta \geq \tf(y, \vb)}.
\Ee
From \eqref{COV_bdry}, \eqref{delta tf 2} and using the Fubini's theorem, we derive
\begin{align} 
\int^{t}_{t_*} \int_{\gamma_+} \eqref{|f|1}_4 \dd s
& = \int_{\gamma_+}
\int^{t_* + \delta}_{ t_* + \tb(x,v)}
\mathbf{1}_{\delta \geq \tb(x,v)}
|f(s- \tb(x, v), \xb, \vb)| \dd s 
\{n(x) \cdot v\} \dd S_x \dd v \notag \\
& \leq \int_{\p\O} \int_{n(y) \cdot v<0}
\mathbf{1} _{\delta> \tf(y, v)}
\int^{t_* + \delta}_{ t_*} |f(s, y, v)| \dd s |n(y) \cdot v| \dd S_{y} \dd v \notag \\
&\leq \int_{\p\O}
\underbrace{\Big( \int_{n(y) \cdot v<0}
\mathbf{1} _{\delta> \tf(y, v)} \mu (y, v) |n(y) \cdot v|\dd v\Big)}_{\eqref{est:|f|4}_*}
\int^{t_* + \delta}_{ t_*} \int_{\gamma_+} |f| \dd s.
 \label{est:|f|4}
\end{align}
Then we conclude $\eqref{est:|f|4} \leq O(\delta^2) \int^t_{t_*} \int_{\gamma_+} |f| \dd s$, therefore we prove \eqref{trace}.

To prove the positivity property, we write 
\[
f_{-} = \frac{|f| - f}{2}.
\]
{\color{black}
Since both $f (t, x, v)$ and $|f (t, x, v)|$ are solutions to \eqref{equation for F} and \eqref{diff_F}, it is clear that $f_{-}$ also solves \eqref{equation for F} and \eqref{diff_F}.
}
From \eqref{maximum} and the assumption $f_0 \geq 0$, we have
\Be \label{positivity}
\| f_{-} (t) \|_{L^1_{x,v}} 
= \big\| \frac{|f| (t) - f (t)}{2} \big\|_{L^1_{x,v}} 
= \Big\| \frac{ \big(|f| - f \big) (t)}{2} \Big\|_{L^1_{x,v}}
\leq \big\| \frac{|f_0| - f_{0} }{2} \big\|_{L^1_{x,v}} = 0,
\Ee
then we conclude $f_{-} (t, x, v) = 0$ on $\Omega \times \R^3$.
\end{proof}

Now we are ready to prove Lemma \ref{lemma:energy}, which will be used frequently in this paper.

\begin{proof}[\textbf{Proof of Lemma \ref{lemma:energy}}] 

Considering the characteristics trajectory
$\big(s, X(s; t, x, v), V(s; t, x, v) \big)$ that is determined by $(t, x, v)$, then we apply this characteristics 
%on $\tf (x, v)$, 
{\color{black}
on $\tf (x, v)$. Recall that $\tf$ is timely independent because of the timely independent field $\Phi(x)$.
}
\Be \notag
\begin{split}
    -1 & = \frac{d}{d s} \tf ( X(s; t, x, v), V(s; t, x, v))
    \\& = \frac{\partial}{\partial X} \tf (X, V) \cdot \frac{d}{ds} X
    + \frac{\partial}{\partial V} \tf (X, V) \cdot \frac{d}{ds} V,
\end{split}
\Ee
By setting $s = t$, we have
\Be \notag
\begin{split}
 \frac{\partial}{\partial x} \tf (x, v) \cdot v
    + \frac{\partial}{\partial v} \tf (x, v) \cdot - \nabla \Phi (x) = -1.
\end{split}
\Ee
{\color{black}
On the other hand, since $f (t, x, v)$ solves \eqref{equation for F} and \eqref{diff_F}, then $|f (t, x, v)|$ also solves \eqref{equation for F} and \eqref{diff_F}, that is, $[\p_t + v\cdot \nabla_x - \nabla \Phi \cdot \nabla_{v}] |f| = 0$.
}
Then, in the sense of distribution 
\Be \label{function for varphi f}
[\p_t + v\cdot \nabla_x - \nabla \Phi \cdot \nabla_{v}] \big(\varphi(\tf) |f| \big) 
= \varphi^\prime(\tf) [\p_t + v\cdot \nabla_x - \nabla \Phi \cdot \nabla_{v}](\tf) |f|
= -\varphi^\prime(\tf)| f|. 
\Ee
From \eqref{equation for F}, \eqref{function for varphi f}, $\varphi (\tau ) \geq0$, $\varphi^\prime \geq0$ and taking integration over $(t_*, t) \times \Omega \times \R^3$, 
we derive
\begin{align}
& \ \ \ \ \| \varphi(\tf) f (t) \|_{L^1_{x,v}} + \int^t_{t_*} \| \varphi^\prime(\tf) f (s) \|_{L^1_{x,v}} \dd s
+ \int^t_{t_*} \int_{\gamma_+} \varphi(\tf) |f| \dd s \notag
\\&
\leq \| \varphi(\tf) f (t_*) \|_{L^1_{x,v}}
+ \int^t_{t_*} 
\int_{\p\O}
\int_{n(x) \cdot v<0}
 \varphi(\tf) |f| |n(x) \cdot v|
\dd v
\dd S_x \dd s \notag
\\& = \| \varphi(\tf) f (t_*) \|_{L^1_{x,v}}
+ \int^t_{t_*} 
\int_{\p\O} 
\int_{n(x) \cdot v<0}
 \varphi(\tf) \mu (x, v) |n(x) \cdot v| \dd v \dd S_x \dd s \notag
\\& \hspace{7cm} \times
\int_{n(x)\cdot v^1>0}| f(s,x,v^1)| 
\{n(x^1) \cdot v^1\}\dd v^1. \label{rho.f}
\end{align}

Now we prove the following claim: 
If \eqref{cond:varphi} holds, then 
\Be \label{claim in lemma 3}
\sup_{x \in \p\O} \int_{n(x) \cdot v<0}
 \varphi(\tf)(x,v) \mu (x, v) |n(x) \cdot v| \dd v\lesssim 1.
\Ee
We split $\int_{n(x) \cdot v<0} \varphi(\tf)(x,v) \mu (x, v) |n(x) \cdot v| \dd v$ into two parts: integration over the regimes of $\tf \leq 1$ and $\tf > 1$. 

For $\tf \leq 1$, 
%using $|v_3| = |n(x) \cdot v| \lesssim \tf \leq 1$ in \eqref{small tb estimate}, we bound
{\color{black}
since $x \in \p\O$ and $n(x) \cdot v<0$, we consider $\xf (x, v), \vf (x, v)$ and get $\tb (\xf, \vf) = \tf (x, v)$. Using \eqref{v3 conservative}, together with \eqref{small tb estimate}, we have 
\[
|v_3| = |\vfn| \lesssim \tb (\xf, \vf).
\]
This shows that $|v_3| \lesssim \tf (x, v) \leq 1$. Combining with $\varphi^\prime \geq0$, we bound
}
\Be \label{int:rho_1}
\begin{split}
 \int_{n(x) \cdot v<0} \mathbf{1}_{\tf \leq 1} \
\varphi(\tf) \mu (x, v) |n(x) \cdot v| \dd v
\lesssim \varphi(1) \int_{\R^3} 
 e^{-|v|^2 / 2} \dd v \lesssim 1. 
\end{split} 
\Ee

%Next for 
{\color{black}
For
}
$\tf > 1$, applying \eqref{mapV_f} in Proposition \ref{prop:mapV} and Lemma \ref{lem: sum of mu},
% we obtain
{\color{black}
together with $|n (x) \cdot v| \lesssim \tf (x, v)$ from \eqref{tb estimate}, we obtain
}
%\Be \label{int:rho_2} 
%\int_{n (x) \cdot v<0} \varphi(\tf) \mu (x, v) |n (x) \cdot v| \dd v
%\lesssim \int_{\p\O} \int_{1}^{\infty}
%\varphi(\tf) \sum\limits_{m, n \in \mathbb{Z}} \mu (\xf, \vf)  \frac{|n (x) \cdot v|}{|\tf|^2} 
%\dd \tf \dd S_{\xf}. 
%\Ee
{\color{black}
\Be \label{int:rho_2} 
\int_{n (x) \cdot v<0} \varphi(\tf) \mu (x, v) |n (x) \cdot v| \dd v
\lesssim \int_{\p\O} \int_{1}^{\infty}
\varphi(\tf) \sum\limits_{m, n \in \mathbb{Z}} \mu (\xf, \vf)  \frac{|\tf|}{|\tf|^2} \dd \tf \dd S_{\xf}. 
\Ee
}
%From \eqref{tb estimate} and \eqref{cond:varphi}, we derive that 
{\color{black}
From Lemma \ref{lem: sum of mu} and \eqref{cond:varphi}, we derive that 
}
\Be \notag
\eqref{int:rho_2}   
\lesssim \int_{1}^\infty
\frac{\varphi(\tf)}{|\tf|} |\tf|^{4 - \A}  \dd \tf
\lesssim \int_{1}^\infty \varphi(\tf) |\tf|^{3 - \A} \dd \tf\lesssim 1.
\Ee
Combining the above bound with \eqref{int:rho_1}, we prove \eqref{claim in lemma 3}. 
Then picking sufficiently small $\delta$ in \eqref{trace} and using \eqref{claim in lemma 3}, we conclude \eqref{energy_varphi}, through, for $C>1$,   
\Be \notag
\begin{split}
\eqref{rho.f} 
& \lesssim \int^t_{t_*} 
\int_{\gamma_+} |f(s,x,v^1)| \{n(x^1) \cdot v^1\} \dd v^1 \dd S_x \dd s 
\\& \leq C (t - t_* + 1) \| f(t_*) \|_{L^1_{x,v}} + \frac{1}{4}  \int^{t}_{t_*} |f(s)|_{L^1(\gamma_+)}.
\end{split}
\Ee  
\end{proof} 

%\subsection{Doeblin condition}
\subsection{Lower bound with the unreachable defect}
\label{sec:lower_bound}

In this section, we prove the Proposition \ref{prop:Doeblin 1} to obtain a lower bound with the unreachable defect. It is the key to control the fluctuations. 

%This is also considered as the Doeblin condition where $f(t, x; v)$ is bounded below by the part of the mass of molecules in previous stochastic cycles. It is the key to control the fluctuations. 

%will prove the key cornerstones, Proposition \ref{prop:Doeblin 1}, the lower bound with the unreachable defect.
 
\begin{proposition} \label{prop:Doeblin 1}
Suppose $f$ solves \eqref{equation for F} and \eqref{diff_F}. Assume $f_0(x,v) \geq 0$. For any $T_0\gg1$ and $N\in \mathbb{N}$ there exists $\mathfrak{m}(x,v)\geq 0$, which only depends on $\O$ and $T_0$ (see \eqref{def:m} for the precise form), such that 
\Be \label{est:Doeblin}
f(NT_0,x,v)\geq  
\mathfrak{m}(x,v) \Big\{
\iint_{\O \times \R^3}f((N-1)T_0,x,v) \dd v \dd x 
-  \iint_{\O \times \R^3} \mathbf{1}_{t_\mathbf{f}(x,v)\geq \frac{T_0}{4}} f((N-1)T_0,x,v)  \dd v \dd x 
\Big\}.
\Ee
\end{proposition}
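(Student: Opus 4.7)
The plan is to start from the stochastic cycle decomposition in Lemma \ref{sto_cycle} applied at $t=NT_0$ with $t_*=(N-1)T_0$, and then exploit the positivity $f\geq 0$ (from Lemma \ref{lem:energy estimate on f}) to drop all terms except one carefully selected contribution. A single bounce only parametrizes a three-dimensional subset of $(y,u)$-space through the post-reflection velocity $v^1$, which is insufficient to dominate an integral over the full six-dimensional set $\{(y,u):\tf(y,u)<T_0/4\}$. I would therefore retain the two-bounce contribution (the $i=2$ term in the expansion), which integrates over $(v^1,v^2)\in\mathcal{V}_1\times\mathcal{V}_2$ and provides the six degrees of freedom needed to parametrize $(y,u)\in\O\times\R^3$.

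The core step is the change of variables $(v^1,v^2)\mapsto(y,u)=(X((N-1)T_0;t^2,x^2,v^2),V((N-1)T_0;t^2,x^2,v^2))$. Under the indicator $\mathbf{1}_{t^3<(N-1)T_0\leq t^2}$ in the expansion, the target $(y,u)$ is precisely the point whose forward characteristic first exits $\O$ at $(x^2,\vb^1)$ with $\tf(y,u)=t^2-(N-1)T_0$. Varying $v^1$ moves the second bounce location $x^2\in\p\O$ and the intermediate time $t^2$ via Proposition \ref{prop:mapV}, while varying $v^2$ sweeps the interior for each choice of second bounce data. I expect the image of $(v^1,v^2)$ to cover all of $\{\tf(y,u)<T_0/4\}$ provided $T_0\gg 1$ is chosen so that the backward trajectories have room to accommodate the prescribed forward exit data.

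The Jacobian of this change is assembled from two applications of Proposition \ref{prop:mapV} (for the boundary part $v^1\leftrightarrow(x^2,t^2)$) together with Liouville invariance of the Hamiltonian flow (for the interior part $v^2\leftrightarrow(y,u)$ at fixed second bounce data). The transformed integrand is a product of the Maxwellian $\mu(x^1,\vb)$ (depending only on $(x,v)$), intermediate factors $\mu(x^2,v^1)\{n(x^1)\cdot v^1\}\{n(x^2)\cdot v^2\}$ that after inversion become functions of $(y,u)$ (via energy conservation, e.g.\ $|\vf(y,u)|^2=|u|^2+2\Phi(y)$), and the Jacobian density. To produce a separable lower bound of the claimed form $\mathfrak{m}(x,v)\cdot\mathbf{1}_{\tf(y,u)<T_0/4}$, I would lower bound the $(y,u)$-dependent factors by their infimum over the target set (a positive constant depending only on $\O$ and $T_0$, obtained by exploiting the bound $\tf<T_0/4$ to control $|v^1|$ and $|v^2|$ in the relevant regime) and absorb the residual $(x,v)$-dependence into $\mathfrak{m}(x,v)=\mu(x^1(x,v),\vb(x,v))\cdot c(\O,T_0)$.

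The main obstacle will be executing the six-dimensional change of variables carefully and verifying the covering claim: one has to show that the constraints $\tb(x^1,v^1)>t^1-t^2$ and $\tb(x^2,v^2)>t^2-(N-1)T_0$ are compatible with reaching every $(y,u)$ with $\tf(y,u)<T_0/4$, and that the Maxwellian and Jacobian factors do not degenerate there. A secondary difficulty is ensuring that the final form of $\mathfrak{m}(x,v)$ depends only on $\O$ and $T_0$, with no residual dependence on $N$ or $(y,u)$ slipping through the change of variables.
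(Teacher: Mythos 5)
Your proposal matches the paper's proof essentially step for step: the paper sets $k=2$ in the stochastic cycle expansion, keeps the two-bounce term \eqref{expand_h3} by positivity, applies Proposition \ref{prop:mapV} to trade $v^1$ for $(x^2,\tb^1)$, and then uses the Liouville-type identity \eqref{COV} to trade $(\alpha,x^2,v^2)$ for $(y,u)$, obtaining exactly the six-dimensional change of variables you anticipate. The only bookkeeping detail worth noting is that the non-degeneracy of the Maxwellian/Jacobian factors is enforced not by $\tf<T_0/4$ alone but by restricting the intermediate flight time $\tb^1$ to the window $\mathfrak{T}^{T_0}$ (which forces $T_0/2\leq\tb^1\leq T_0$, whence $|v^1_3|$ is controlled via \eqref{tb estimate}), while the $v^2$-dependent factor $\{n(x^2)\cdot v^2\}$ is exactly the Liouville boundary measure and drops out cleanly with no infimum-taking needed.
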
 

\begin{proof}
\textbf{Step 1.} 
From \eqref{positivity} the assumption $f_0 (x, v) \geq 0$, we have $f(t,x,v) \geq 0$. From \eqref{expand_h1}-\eqref{expand_h3} and setting $t = NT_0$, $t_* = (N-1)T_0$, $k=2$, we can derive that 
\be \label{Doeblin_1}
f(NT_0, x,v) 
\geq \mathbf{1}_{ \tb(x,v) \leq \frac{T_0}{4}} \mu (x^{1}, \vb) \int_{\mathcal{V}_1} 
 \int_{\mathcal{V}_2}  
 \mathbf{1}_{t^2 \geq (N-1)T_0}
 f(t^2, x^2, v^2)  \{n(x^2) \cdot v^2\} \dd v^2 \dd \sigma_1.
\ee 

Now we apply Proposition \ref{prop:mapV}  on $v^1 \in \mathcal{V}_1$ with \eqref{mapV} and \eqref{jacob:mapV}. 
In order to have the bijective mapping with \eqref{mapV}, we restrict the range of $\vb^1$ as 
\Be \label{def:x^2}
\mathcal{V}_{1,b} := \{ \vb^1 \in \R^3:  x^2 + \int^{t}_{t - \tb} \big( \vb +  \int^{s}_{t - \tb}  - \nabla \Phi ( X(\alpha; t^1,x^1, v^1)) \dd \alpha \big) \dd s = x^1 \}.
\Ee
This implies all characteristic trajectories $X(\alpha; t^1,x^1, v^1)$ between $x^1$ and $x^2$ under $\vb^1 \in \mathcal{V}_{1,b}$ don't cross the periodic boundary.

Therefore, we derive 
\Be \label{Doeblin_2}
\begin{split}
\eqref{Doeblin_1} 
& \gtrsim
\mathbf{1}_{ \tb(x,v) \leq \frac{T_0}{4}}  \mu (x^{1}, \vb) 
 \int_{0}^{T_0 - \tb (x, v)}
  \int_{\p\O} 
  \underbrace{ \frac{n(x^1) \cdot v^1}{(\tb^1)^2 (1 + |v^1_3| \tb^1)} \mu (x^2, \vb^1) }_{\eqref{Doeblin_2}_*}
\\& \ \ \ \ \ \ \times 
   \int_{n(x^2) \cdot v^2>0}
   f(t^2, x^2, v^2)  \{n(x^2) \cdot v^2\} \dd v^2
    \dd S_{x^2}\dd \tb^1,
\end{split}
\Ee
where $t^2 = NT_0 - \tb(x,v) - \tb^1$ and $n(x^1) \cdot v^1 = v^1_3$.
  
\medskip 

\textbf{Step 2.} In order to bound the integrand of the first line in \eqref{Doeblin_2}, we will further restrict integration regimes. Note that $x^1 = \xb(x,v)$ is given, $x^2$ is free variables and $t^2 \geq (N-1) T_0$.

Now we restrict the integral regimes of the variable $\tb^1$ as 
  
\Be \label{def:T+}
\mathfrak{T}^{T_0} :=
 \Big\{
 \tb^1 \in  [0, \infty):
 T_0- \tb(x,v) - \min\Big(\tb(x^2, v^2) ,
 \frac{T_0}{4}
\Big)
  \leq \tb^1 \leq 
 T_0- \tb(x,v)
 \Big\}.
\Ee
As a consequence of \eqref{def:T+} 
and $\tb(x,v) \leq \frac{T_0}{4}$ in \eqref{Doeblin_2}, we will derive  \eqref{min:tb1} and \eqref{cond:tf},
\Be \label{min:tb1}
\frac{T_0}{2} \leq T_0 - \tb(x,v) - \frac{T_0}{4}
\leq \tb^1 \leq T_0.
\Ee
Secondly, we prove \eqref{cond:tf}. Note that if $\tb^1 \in \mathfrak{T}^{T_0}$, we have
\Be \notag
(N-1)T_0
\leq t^2 =
 NT_0 - \tb(x,v) - \tb^1 \leq 
 (N-1)T_0 
 +\min \{\tb(x^2, v^2) ,
 \frac{T_0}{4}
\}.
\Ee
This implies that, for $y_* = X((N-1)T_0; t^2, x^2, v^2)$ and $v_* = V((N-1)T_0; t^2, x^2, v^2)$, we have 
\Be \label{cond:tf}
\tf(y_*, v_*)= t^2 - (N-1)T_0 = T_0 - \tb(x,v) - \tb^1 \in 
 \Big[0, \frac{T_0}{4}\Big],
\Ee
where we use $\tf (y_*, v_*) \leq \tb (x^2, v^2)$ since $x^2 = \xf(y_*, v_*)$. 
 
\medskip

\textbf{Step 3.} For \eqref{Doeblin_2}, we apply the restriction of integral regimes in \eqref{def:x^2} and \eqref{def:T+}.
%Using \eqref{v3 conservative}, \eqref{min:tb1} and the assumption $T_0 \gg 1$, we derive that 
%\Be \notag
%\begin{split}
%\eqref{Doeblin_2}_*
%\gtrsim \frac{1}{|\tb^1|^3} \mu (x^2, \vb^1)
%& = \frac{1}{|\tb^1|^3}  
%\mu \big( |n(x^2) \cdot \vb^1| \big) \mu \big( \frac{|x^2 - x^1|}{|\tb^1|}  \big)
%\\& \gtrsim \frac{1}{|\tb^1|^3}  
%\mu (|v^1_3|) \mu \big( \frac{\sqrt{2}}{|\tb^1|}  \big)
%\gtrsim \frac{1}{(T_0)^3} e^{- \frac{1}{2} |v^1_3|^2}
%\\& \gtrsim \frac{1}{(T_0)^3} (\tb^1)^{-(\A+1)}
%\geq (T_0)^{-4-\A},
%\end{split}
%\Ee
%where the second last inequality follows from $T_0 \gg 1$, \eqref{tb estimate} and $\A \leq \frac{1}{\ln (a)} < \A + 1$.
{\color{black}
Note that $\frac{n(x^1) \cdot v^1}{(\tb^1)^2 (1 + |v^1_3| \tb^1)} = \frac{|v^1_3|}{(\tb^1)^2 + |v^1_3| (\tb^1)^3} \geq \frac{1}{|\tb^1|^3}$. Then
\Be \notag
\begin{split}
\eqref{Doeblin_2}_*
\gtrsim \frac{1}{|\tb^1|^3} \mu (x^2, \vb^1)
& = \frac{1}{|\tb^1|^3}  
\mu \big( |n(x^2) \cdot \vb^1| \big) \mu \big( \frac{|x^2 - x^1|}{|\tb^1|}  \big)
\\& \gtrsim \frac{1}{|\tb^1|^3}  
\mu (|v^1_3|) \mu \big( \frac{\sqrt{2}}{|\tb^1|}  \big)
\gtrsim \frac{1}{(T_0)^3} e^{- \frac{1}{2} |v^1_3|^2} e^{- \frac{4}{T^2_0}} 
\\& \gtrsim \frac{1}{(T_0)^3} (\tb^1)^{-(\A+1)}
\geq (T_0)^{-4-\A},
\end{split}
\Ee
where the second last inequality follows from $T_0 \gg 1$, \eqref{tb estimate} and $\A \leq \frac{1}{\ln (a)} < \A + 1$.
}

Finally, we get 
\begin{align}
\eqref{Doeblin_2} 
& \geq  \mathbf{1}_{ \tb(x,v) \leq \frac{T_0}{4}} (T_0)^{-4-\A}  
  \mu (x^1, \vb) \int_{\p\O} \dd S_{x^2} \int_{n(x^2) \cdot v^2 > 0}
 \dd v^2 \{n(x^2) \cdot v^2\} \notag 
\\&  \ \ \ \ \ \ \times 
	\int_{\mathfrak{T}^{T_0}} \dd \tb^1
   f(
   NT_0 -\tb(x,v) - \tb^1
   ,x^2, v^2)  \notag 
\\& \gtrsim  \mathbf{1}_{ \tb(x,v) \leq \frac{T_0}{4}} (T_0)^{-4-\A} 
  \mu (x^1, \vb)
  \int_{\p\O} \dd S_{x^2} \int_{n(x^2) \cdot v^2 > 0}\dd v^2 \{n(x^2) \cdot v^2\} \notag
\\& \ \ \ \ \times
  \int^{ T_0- \tb(x,v)}_{   
 T_0- \tb(x,v) - \min\big(\tb(x^2, v^2) ,
 \frac{T_0}{4}
\big)} \dd \tb^1  f(NT_0 -\tb(x,v) - \tb^1,x^2, v^2). \label{lower1}
\end{align} 
 
Now we focus on the integrand of \eqref{lower1}. Recall \eqref{def:T+}, we have  
\Be \notag
(NT_0 - \tb(x,v) - \tb^1)- (N-1)T_0= T_0 - \tb(x,v) - \tb^1
\in \Big[0, \min\Big(\tb(x^2, v^2) ,
 \frac{T_0}{4}
\Big)\Big]. 
\Ee
Now setting $y_* = X((N-1)T_0;t^2, x^2, v^2)$, $v_* = V((N-1)T_0;t^2, x^2, v^2)$ and $\alpha = T_0 - \tb(x,v) - \tb^1$, 
we have
\Be \label{lower2}
\eqref{lower1}
= \int^{\min\big(\tb(x^2, v^2) ,
 \frac{T_0}{4}
\big)}_{0}
 f
\big( (N-1)T_0, y_*, v_* \big) \dd \alpha.
\Ee
From \eqref{cond:tf}, we have $\tf(y_*, v_*) \in \big[0, \frac{T_0}{4}\big]$. Now applying \eqref{COV}, we conclude that 
 \Be \notag
\eqref{Doeblin_2} 
\geq \mathbf{1}_{ \tb(x,v) \leq \frac{T_0}{4}} (T_0)^{-4-\A}
  \mu (x^1, \vb)
  \iint_{\O \times \R^3} \mathbf{1}_{\tf(y,v)  \in [0, \frac{T_0}{4}]} 
  f((N-1)T_0, y,v) \dd v \dd y.  \notag
 \Ee
We conclude \eqref{est:Doeblin} by setting 
\Be \label{def:m}
\mathfrak{m} (x,v) :=  
\mathbf{1}_{ \tb(x,v) \leq \frac{T_0}{4}} (T_0)^{-4-\A} \mu (x^1, \vb).
\Ee 
\end{proof} 

An immediate consequence of Proposition \ref{prop:Doeblin 1}. follows.

\begin{proposition} \label{prop:Doeblin}

%Suppose $f$ solves \eqref{equation for F} and \eqref{diff_F}, then for all $T_0\gg1$ and $N \in \mathbb{N}$, 
{\color{black}
Suppose $f$ solves \eqref{equation for F}, \eqref{diff_F} and satisfies \eqref{cons_mass_f}. Then for all $T_0\gg1$ and $N \in \mathbb{N}$, 
}
\Be \label{L1_coerc}
   \|f(NT_0)\|_{L^1_{x,v}}  \leq  (1-\| \mathfrak{m}\|_{L^1_{x,v}} )   \|f((N-1)T_0)\|_{L^1_{x,v}} 
   + 2 \| \mathfrak{m}\|_{L^1_{x,v}}  \| \mathbf{1}_{\tf\geq \frac{T_0}{4}} f((N-1)T_0)\|_{L^1_{x,v}}.
 \Ee 
Moreover, 
%we have
{\color{black}
there exists $T_0 = T_0 (\O)$, such that
}
\Be \label{est:m}
\| \mathfrak{m} \|_{L^1_{x,v}} := \mathfrak{m}_{T_0} 
\lesssim (T_0)^{-3 - \A} |\p\O| < 1. 
\Ee
\end{proposition}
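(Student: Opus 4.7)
The plan is to separate the argument into two independent parts: I would deduce the $L^1$-inequality \eqref{L1_coerc} from the pointwise lower bound of Proposition \ref{prop:Doeblin 1} by a Hahn-type decomposition of $f$, and then I would obtain the quantitative bound \eqref{est:m} by a direct change of variables in the explicit formula \eqref{def:m}.

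For \eqref{L1_coerc}, the obstacle is that Proposition \ref{prop:Doeblin 1} requires non-negative data, while $f$ is sign-changing. I would write $f((N-1)T_0, \cdot, \cdot) = h_+ - h_-$ with $h_\pm := \max(\pm f((N-1)T_0), 0) \geq 0$ and let $F_\pm$ solve \eqref{equation for F}, \eqref{diff_F} on $[(N-1)T_0, NT_0]$ starting from $h_\pm$. By \eqref{cons_mass_f} and mass conservation \eqref{maximum}, $\|h_+\|_{L^1_{x,v}} = \|h_-\|_{L^1_{x,v}} = M := \tfrac12 \|f((N-1)T_0)\|_{L^1_{x,v}}$, and linearity gives $f(NT_0) = F_+(NT_0) - F_-(NT_0)$. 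Applying Proposition \ref{prop:Doeblin 1} with shifted time origin to each $F_\pm$ yields
\[
F_\pm(NT_0, x, v) \geq \mathfrak{m}(x, v)\bigl\{M - \|\mathbf{1}_{\tf \geq T_0/4}\, h_\pm\|_{L^1_{x,v}}\bigr\}.
\]
Using the identity $|a-b| = a + b - 2\min(a,b)$ together with mass conservation produces $\|f(NT_0)\|_{L^1_{x,v}} = 2M - 2\iint \min(F_+, F_-)$. I would bound the minimum from below by $\mathfrak{m}(x,v)\max\{0,\, M - \|\mathbf{1}h_+\|_{L^1_{x,v}} - \|\mathbf{1}h_-\|_{L^1_{x,v}}\}$ and use $\|\mathbf{1}h_+\|_{L^1_{x,v}} + \|\mathbf{1}h_-\|_{L^1_{x,v}} = \|\mathbf{1}_{\tf \geq T_0/4} f((N-1)T_0)\|_{L^1_{x,v}}$. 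In the regime $M \geq \|\mathbf{1}_{\tf \geq T_0/4} f((N-1)T_0)\|_{L^1_{x,v}}$ this directly produces \eqref{L1_coerc}; in the opposite regime, the right-hand side of \eqref{L1_coerc} already exceeds $\|f((N-1)T_0)\|_{L^1_{x,v}}$, which dominates $\|f(NT_0)\|_{L^1_{x,v}}$ via \eqref{maximum}.

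For \eqref{est:m}, I would apply \eqref{COV+} to the explicit formula $\mathfrak{m}(x,v) = \mathbf{1}_{\tb(x,v) \leq T_0/4}(T_0)^{-4-\A} \mu(\xb(x,v), \vb(x,v))$ from \eqref{def:m}. The parametrisation $(y, v') \in \Omega \times \R^3 \leftrightarrow (x, v, s) \in \gamma_- \times [0, \tf(x,v)]$ via $(y, v') = (X(t, t-s, x, v), V(t, t-s, x, v))$ gives $\tb(y, v') = s$, $\xb(y, v') = x$, $\vb(y, v') = v$, so that
\[
\|\mathfrak{m}\|_{L^1_{x,v}} = (T_0)^{-4-\A} \int_{\gamma_-} \min\bigl(\tf(x,v), T_0/4\bigr)\, \mu(x,v)\, |n(x) \cdot v|\, \dd v\, \dd S_x.
\]
The crude bound $\min \leq T_0/4$ together with the Gaussian flux identity $\int_{n(x) \cdot v < 0} \mu(v)|n(x) \cdot v|\, \dd v = 1$ and $|\p\O| = 1$ then yields $\|\mathfrak{m}\|_{L^1_{x,v}} \leq \tfrac14 (T_0)^{-3-\A} |\p\O|$, which is $<1$ once $T_0 = T_0(\O)$ is sufficiently large.

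The technical heart of the argument is the $h_+/h_-$ reduction, which transforms Proposition \ref{prop:Doeblin 1} (a statement about positive data) into an $L^1$-contraction for sign-changing $f$; the crucial combinatorial step is the replacement of $\max(\|\mathbf{1}h_+\|_{L^1_{x,v}}, \|\mathbf{1}h_-\|_{L^1_{x,v}})$ by the total $\|\mathbf{1}_{\tf \geq T_0/4} f\|_{L^1_{x,v}}$, which is what converts the pointwise Doeblin bound into a quantitative decay of mass. Once this is in place, \eqref{est:m} is a direct application of \eqref{COV+} and a one-dimensional Gaussian computation.
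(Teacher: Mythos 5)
Your argument is correct and is essentially the paper's proof: the same Hahn decomposition of $f((N-1)T_0)$ into disjoint nonnegative parts, solving the linear equation from each part, applying Proposition \ref{prop:Doeblin 1} to both, and a change of variables for $\|\mathfrak{m}\|_{L^1_{x,v}}$. The only cosmetic difference is that the paper avoids your two-regime case split by using a single (possibly negative) lower envelope $\mathfrak{l}(x,v) = \mathfrak{m}(x,v)\big(\tfrac12\|f((N-1)T_0)\|_{L^1_{x,v}} - \|\mathbf{1}_{\tf\geq T_0/4}f((N-1)T_0)\|_{L^1_{x,v}}\big)$ valid for both parts simultaneously, which lets it conclude directly from $|F_+ - F_-| \leq (F_+ - \mathfrak{l}) + (F_- - \mathfrak{l})$ without checking signs, and it parametrises by $\gamma_+$ and $\tb$ via \eqref{COV} rather than your equivalent $\gamma_-$/$\tf$ parametrisation via \eqref{COV+}.
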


\begin{proof}

We decompose 
\begin{align*}
 f((N-1)T_0,x,v) 
 = f_{ N-1 ,+}(x,v) - f_{ N-1 ,-}(x,v),\end{align*}
where 
\Be \notag
\begin{split}
& f_{N-1,+} (x,v ) = \mathbf{1}_{f((N-1)T_0,x,v)  \geq 0} f \big( (N-1)T_0,x,v \big),
\\& f_{N-1,-} (x,v ) = \mathbf{1}_{f((N-1)T_0,x,v)  < 0} |f\big( (N-1)T_0,x,v \big)|.
\end{split}
\Ee
Let $f_{\pm}(s,x,v)$ solve \eqref{equation for F} for $s \in [ (N-1)T_0,NT_0]$ with the initial data $f_{N-1,+}$ and $f_{N-1,-}$ at $s=(N-1)T_0$, respectively.
 
Now we apply Proposition \ref{prop:Doeblin 1} on $f_{\pm}(t,x,v)$ and conclude \eqref{est:Doeblin} for $f= f_+$ and $f=f_-$ respectively. We also note that 
\Be \notag 
\iint_{\O \times \R^3} f((N-1)T_0,x,v) \dd x \dd v=\iint_{\O \times \R^3} f_{N-1,+}(x,v) \dd x \dd v- \iint_{\O \times \R^3} f_{N-1,-}(x,v) \dd x \dd v =0.
\Ee
This implies,
\Be \label{pm equality}
\iint_{\O \times \R^3} f_{N-1, \pm}( x,v)  \dd x \dd v =\frac{1}{2} \iint_{\O \times \R^3} |f((N-1)T_0,x,v)| \dd x \dd v.
\Ee
From \eqref{est:Doeblin},
\Be \notag
f_{N-1, \pm}( x,v) \geq \mathfrak{m}(x,v) \iint f_{N-1,\pm}(x,v) \dd x \dd v -  
  \mathfrak{m}(x,v) \iint_{\O \times \R^3} \mathbf{1}_{\tf(x,v) \geq \frac{T_0}{4}} f_{N-1,\pm} (x,v ) \dd x \dd v
\Ee
Using \eqref{pm equality}, we have
\Be \label{lowerB:f}
f_{N-1, \pm}( x,v) \geq 
\underbrace{ 
\mathfrak{m}(x,v) \Big(
\frac{1}{2} \|f((N-1)T_0)\|_{L^1_{x,v}}  
  - \| \mathbf{1}_{\tf\geq \frac{T_0}{4}} f((N-1)T_0)\|_{L^1_{x,v}}
  \Big)
}_{\mathfrak{l}(x,v)}. 
\Ee 
Then we deduce
\Be \notag
\begin{split}
|f(NT_0,x,v)| 
& = |f_{N-1, +}( x,v) - f_{N-1, -}( x,v) + \mathfrak{l}(x,v) - \mathfrak{l}(x,v)|
\\& \leq  |f_{N-1, +}( x,v) - \mathfrak{l}(x,v)| +|f_{N-1, -}( x,v) - \mathfrak{l}(x,v)|.
\end{split}
\Ee 
From \eqref{lowerB:f},
\Be \label{upperNT:f}
|f(NT_0,x,v)| \leq f_{N-1, +}( x,v) + f_{N-1, -}( x,v) - 2 \mathfrak{l}(x,v).
\Ee
Note that $f_+(NT_0,x,v)+f_-(NT_0,x,v)$ solves \eqref{equation for F} with the initial datum $$f_{N-1,+} + f_{N-1,-}
= \big| f \big((N-1)T_0,x,v \big) \big|.$$ 
Using \eqref{pm equality}, \eqref{upperNT:f} and taking the integration on \eqref{lowerB:f} over $\O \times \R^3$, we derive
\Be \notag
\begin{split}
\|f(NT_0)\|_{L^1_{x,v}} 
& \leq \iint_{\O \times \R^3} f_{N-1, +}( x,v)  \dd x \dd v
+ \iint_{\O \times \R^3} f_{N-1, -}( x,v)  \dd x \dd v - \iint_{\O \times \R^3} 2 \mathfrak{l}(x,v)  \dd x \dd v
\\& = (1-\| \mathfrak{m}\|_{L^1_{x,v}} )   \|f((N-1)T_0)\|_{L^1_{x,v}} 
   + 2 \| \mathfrak{m}\|_{L^1_{x,v}}  \| \mathbf{1}_{\tf\geq \frac{T_0}{4}} f((N-1)T_0)\|_{L^1_{x,v}},
\end{split}
\Ee
therefore we prove \eqref{L1_coerc}.

To derive \eqref{est:m}, it suffices to bound $\| \mathbf{1}_{\tb(x,v) \leq \frac{T_0}{4}}  
\mu (x^1, \vb)\|_{L^1_{x,v}}$. 
From Lemma \ref{lem:COV}, Lemma \ref{conservative field}, and 
%$\tb (t-s, X(t-s,t,x,v),V(t-s,t,x,v)) = \tb(t,x,v)- s$, 
{\color{black}
$\tb (X(t-s,t,x,v), V(t-s,t,x,v)) = \tb(x,v)- s$,
}
we have
\Be \notag
\begin{split}
& \ \ \ \ \| \mathbf{1}_{\tb(x,v) \leq \frac{T_0}{4}} \mu (x^1, \vb) \|_{L^1_{x,v}} 
 = \int_{\gamma_{+}} \int_{ \max\{0,\tb(x,v)- \frac{T_0}{4}\}}  ^{\tb(x,v)} 
\mu (x^1, v) \{n(x) \cdot v\}\dd s \dd v \dd S_x \\
\\& \lesssim \int_{\gamma_{+}}  
\Big(  \mathbf{1}_{\tb(x,v) \leq \frac{T_0}{4}} \int_0^{\tb(x,v)} \dd s
+\mathbf{1}_{\tb(x,v) \geq \frac{T_0}{4}}\int^{\tb(x,v)}_{\tb(x,v)- \frac{T_0}{4}}  \dd s \Big) 
e^{-\frac{1}{2} |v|^2} \{n(x) \cdot v\} \dd v \dd S_x
\\& \leq \frac{T_0}{4} \int_{\p\O } \dd S_x \int_{n(x) \cdot v>0} 
e^{-\frac{1}{2} |v|^2} \{n(x) \cdot v\} \dd v \lesssim T_0 |\p\O|.  
\end{split}
\Ee
Combining the above bound with \eqref{def:m}, we conclude \eqref{est:m}.
\end{proof}

{\color{black}
\begin{remark}
Throughout this paper, we consider $\O = \mathbb{T}^2 \times \R_{+}$ and $|\p\O| = 1$. Thus, any $T_0 > 1$ satisfies \eqref{est:m}. In general, $T_0$ depends heavily on $|\p\O|$, otherwise $\mathfrak{m}_{T_0} > 1$ and it leads to a negative estimate for $L^1$ in \eqref{L1_coerc}.
\end{remark}
}

\subsection{Proof of weighted \texorpdfstring{$L^1$}{L1}-Estimates}

{\color{black}
In this section, we prove Theorem \ref{theorem_1}. 
We start with establishing
}
%Now we establish 
the uniform estimates of the following energies:
\Be \label{|||i}
\vertiii{f}_i :=
\|f \|_{L^1_{x,v}}
+ \frac{ 4 \mathfrak{m}_{T_0} }{ \varphi_{i-1} (\frac{3T_0}{4})} \| \varphi_{i-1}(\tf) f\|_{L^1_{x,v}}  
+ \frac{ 4e \mathfrak{m}_{T_0} }{ T_0 \varphi_{i-1} (\frac{3T_0}{4})} \| \varphi_{i}(\tf) f\|_{L^1_{x,v}},
\Ee
where $\| \mathfrak{m} \|_{L^1_{x,v}} := \mathfrak{m}_{T_0}$ (see \eqref{est:m}) and $\varphi_i$'s defined in \eqref{varphis} with $i = 2, 4$.

Here we first introduce the weight functions $\varphi_i$'s.

\begin{definition} \label{def:varphis}
For $0 < \delta < 1$, we set
\Be \label{varphis}
\begin{split} 
& \varphi_1 (\tau) := (e \ln (e+1))^{-1} (e+ \tau)\ln(e + \ln (e+ \tau)), 
\\&  \varphi_2 (\tau) := (e^2 \ln (e+1))^{-1}(e+ \tau)^2 \ln(e + \ln (e+ \tau)), 
\\& \varphi_3 (\tau) := e^{5 - \A} (\tau+e)^{\A - 5} \big(\ln (\tau+e)\big)^{-(1+\delta)}, 
\\& \varphi_4 (\tau) := e^{4 - \A} (\tau+e)^{\A - 4} \big(\ln (\tau+e)\big)^{-(1+\delta)}.
\end{split}
\Ee 
 
First, $\varphi_i$ satisfies \eqref{cond:varphi} for $i = 1,2,3,4$: for example, for $i = 4$,
\Be \notag
\begin{split}
\int_1^\infty \tau^{3 - \A} \varphi_4 (\tau)  \dd \tau
= \int_1^\infty \tau^{3 - \A} e^{4 - \A} (\tau+e)^{\A - 4} \big(\ln (\tau+e)\big)^{-(1+\delta)}  \dd \tau
& \lesssim  \int_1^\infty \frac{1}{(\tau+e) (\ln(\tau+e))^{1+\delta}}\dd s < \infty.
\end{split}
\Ee

Second, $\varphi_i$ satisfies
\Be\label{varphi|0}
\varphi_i (0)=1, \ \ \text{for} \ i= 1,2,3,4.
\Ee
 
Finally, we have
\Be \label{phi'}
\begin{split}
&  \varphi_2^\prime (\tau) 
\geq (e^2 \ln (e+1))^{-1} 2 (e+ \tau)\ln(e + \ln (e+ \tau))
\geq  2 e ^{-1} \varphi_1 (\tau),
\ \ \varphi_1^\prime (\tau) \geq 0,
\\& \varphi_4^\prime (\tau) 
= \big(\A - 4 - \frac{1+\delta}{\ln (\tau+e)}\big)e^{4 - \A} (\tau+e)^{\A - 5}  \big(\ln (\tau+e)\big)^{-(1+\delta)} 
\geq \varphi_3 (\tau), \ \
\varphi_3^\prime (\tau) \geq 0.
\end{split}   
\Ee 
\end{definition}

\begin{proposition} \label{prop:energy}
Choose $T_0 > 20$, such that for the constant $C$ in \eqref{energy_varphi},
\Be \label{cond:T0}
4 C (e + 3 T_0)  \Big( \varphi_i \big( \frac{3T_0}{4} \big) \Big)^{-1}
\leq \frac{1}{2}, \ \text{for} \ i = 1, 3.
\Ee
For any $N \in \mathbb{N}$, and $i = 2, 4$, 
\Be \label{energyi}
\begin{split}   
& \ \ \ \ \| f(NT_0) \|_{L^1_{x,v}}
    +    \frac{ 4\mathfrak{m}_{T_0} }{ \varphi_{i-1} (\frac{3T_0}{4})}
    \Big\{
2 \| \varphi_{i-1} (\tf) f  (NT_0)\|_{L^1_{x,v}} 
+  \frac{e}{T_0} \| \varphi_i(\tf) f(NT_0) \|_{L^1_{x,v}}
	\Big\}
\\& \leq \  \eqref{energyi}_* \times 
    \|f((N-1)T_0)\|_{L^1_{x,v}}  
\\& \quad\qquad + \frac{4 \mathfrak{m}_{T_0} }{ \varphi_{i-1} (\frac{3T_0}{4})}
 \Big\{ \frac{3}{4} \| \varphi_{i-1}(\tf)f((N-1)T_0)\|_{L^1_{x,v}}  
 +  \frac{e }{T_0  }
 \| \varphi_i(\tf) f((N-1)T_0) \|_{L^1_{x,v}}\Big\},   
\end{split}
\Ee
where $\eqref{energyi}_* := 1 - \mathfrak{m}_{T_0} \{
1 - \frac{ 4 C(e + 3 T_0) }{ \varphi_{i-1}(\frac{3T_0}{4})} \}$, with $\mathfrak{m}_{T_0}$ defined in \eqref{est:m}.
\end{proposition}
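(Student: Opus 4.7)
My plan is to combine the Doeblin-type contraction \eqref{L1_coerc} of Proposition \ref{prop:Doeblin} with two applications of Lemma \ref{lemma:energy} on the time interval $[(N-1)T_0, NT_0]$, one with the weight $\varphi_{i-1}$ and one with $\varphi_i$. The driving mechanism is the pointwise inequality $\varphi_i^\prime \geq c_i \varphi_{i-1}$ from \eqref{phi'} (with $c_2 = 2e^{-1}$ and $c_4 = 1$), which converts the dissipation $\int_{(N-1)T_0}^{NT_0} \|\varphi_i^\prime(\tf) f\|_{L^1_{x,v}}\,\dd s$ supplied by Lemma \ref{lemma:energy} into a time-integrated control on $\|\varphi_{i-1}(\tf) f\|_{L^1_{x,v}}$. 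The boundary traces $\tfrac{1}{4}\int |f|_{L^1_{\gamma_+}}\,\dd s$ appearing in both energy inequalities are absorbed via the trace estimate \eqref{trace} of Lemma \ref{lem:energy estimate on f} with $\delta$ chosen small, contributing only an extra $C(T_0+1)\|f((N-1)T_0)\|_{L^1_{x,v}}$.

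The delicate step is to convert the unreachable defect $\|\mathbf{1}_{\tf \geq T_0/4} f((N-1)T_0)\|_{L^1_{x,v}}$ appearing in \eqref{L1_coerc} into a $\varphi_{i-1}$-weighted quantity with denominator $\varphi_{i-1}(3T_0/4)$. I would split $\{\tf \geq T_0/4\}$ at time $(N-1)T_0$ into $\{\tf \geq 3T_0/4\}$ and $\{T_0/4 \leq \tf < 3T_0/4\}$. On the first piece, Chebyshev together with monotonicity of $\varphi_{i-1}$ gives
\begin{equation*}
\|\mathbf{1}_{\tf \geq 3T_0/4} f((N-1)T_0)\|_{L^1_{x,v}} \leq \frac{1}{\varphi_{i-1}(3T_0/4)}\|\varphi_{i-1}(\tf) f((N-1)T_0)\|_{L^1_{x,v}}.
\end{equation*}
On the second piece, the forward characteristic stays in $\Omega$ for all $s \in [(N-1)T_0, (N-1)T_0 + T_0/2]$ (since $\tf \geq T_0/4$) and $|f|$ is transported along it, so a mean-value argument replaces the instantaneous $L^1$-mass by $\tfrac{2}{T_0}\int_{(N-1)T_0}^{(N-1)T_0+T_0/2}\|\varphi_{i-1}(\tf) f\|_{L^1_{x,v}}\,\dd s$ divided by $\varphi_{i-1}(3T_0/4)$; this integrated quantity is controlled by the dissipation coming from the $\varphi_i$-application of Lemma \ref{lemma:energy}, and the prefactor $\tfrac{1}{T_0}$ matches the normalization $\tfrac{e}{T_0 \varphi_{i-1}(3T_0/4)}$ of the $\varphi_i$-term in the definition \eqref{|||i} of $\vertiii{\cdot}_i$.

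With the defect bound in hand, \eqref{L1_coerc} becomes $\|f(NT_0)\|_{L^1_{x,v}} \leq (1 - \mathfrak{m}_{T_0})\|f((N-1)T_0)\|_{L^1_{x,v}} + \tfrac{4\mathfrak{m}_{T_0}}{\varphi_{i-1}(3T_0/4)}(\text{weighted combination}) + \tfrac{4\mathfrak{m}_{T_0}C(T_0+1)}{\varphi_{i-1}(3T_0/4)}\|f((N-1)T_0)\|_{L^1_{x,v}}$, and the last term is absorbed into $\eqref{energyi}_*$ thanks to \eqref{cond:T0}. Adding the $\varphi_{i-1}$- and $\varphi_i$-energy inequalities from Lemma \ref{lemma:energy}, with the multipliers $\tfrac{8\mathfrak{m}_{T_0}}{\varphi_{i-1}(3T_0/4)}$ and $\tfrac{4e\mathfrak{m}_{T_0}}{T_0\varphi_{i-1}(3T_0/4)}$ respectively, yields the LHS of \eqref{energyi}; the coefficient $\tfrac{3}{4}$ in front of $\|\varphi_{i-1}(\tf) f((N-1)T_0)\|_{L^1_{x,v}}$ on the RHS emerges after rearrangement, once the integrated $\varphi_{i-1}$ control has been used twice, once to kill the defect and once to shave off a $\tfrac{1}{2}$-fraction of the naive $1$-coefficient from the $\varphi_{i-1}$ energy step. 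The main obstacle I expect is this last coefficient bookkeeping: matching the exact prefactors $2$, $\tfrac{3}{4}$, and $\tfrac{e}{T_0}$ in \eqref{energyi} requires careful tracking of the constants coming from the defect splitting together with the $c_i$ in \eqref{phi'}, and \eqref{cond:T0} is the precise hypothesis needed to make all residual $C(T_0+1)\|f((N-1)T_0)\|_{L^1_{x,v}}$ contributions fit inside $\eqref{energyi}_*$.
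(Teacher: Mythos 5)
Your structural skeleton is the same as the paper's: combine the Doeblin-type contraction \eqref{L1_coerc} from Proposition \ref{prop:Doeblin} with Lemma \ref{lemma:energy} applied with $\varphi_{i-1}$ and with $\varphi_i$, chain them through $\varphi_i^\prime \geq c_i\varphi_{i-1}$ from \eqref{phi'}, and take a linear combination so that the condition \eqref{cond:T0} absorbs the residual $C(e+3T_0)\|f((N-1)T_0)\|_{L^1_{x,v}}$ terms into $\eqref{energyi}_*$. That much is right. The paper, concretely, first integrates the $\varphi_{i-1}$-inequality \eqref{energy:log} over $t_*\in[(N-1)T_0,NT_0]$ and feeds it through \eqref{phi'} into the dissipation of the $\varphi_i$-inequality to produce \eqref{energy:phi1}, and then takes $\eqref{L1_coerc'} + \tfrac{4\mathfrak{m}_{T_0}}{\varphi_{i-1}(3T_0/4)}\bigl\{\tfrac14\eqref{energy:log}\big|_{t_*=(N-1)T_0} + \tfrac{e}{T_0}\eqref{energy:phi1}\bigr\}$; your description of adding the two raw Lemma \ref{lemma:energy} outputs with fixed multipliers skips this intermediate chaining step, but the resulting bookkeeping lands in the same place.

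Where you depart from the paper and also go wrong is the treatment of the unreachable defect. The paper does not split $\{\tf\geq T_0/4\}$: it uses the one-line Chebyshev bound \eqref{energy:phi2}, $\mathbf{1}_{\tf\geq 3T_0/4}\leq\varphi_{i-1}(3T_0/4)^{-1}\varphi_{i-1}(\tf)$, to pass from \eqref{L1_coerc} to \eqref{L1_coerc'}, and no time-averaging is invoked. Your proposed split into $\{\tf\geq 3T_0/4\}$ and $\{T_0/4\leq\tf<3T_0/4\}$ is unnecessary, and the argument you sketch for the second piece is not correct as stated. In particular, the claim that for $\tf\geq T_0/4$ the forward characteristic ``stays in $\Omega$ for all $s\in[(N-1)T_0,(N-1)T_0+T_0/2]$'' is false: if $\tf(y,w)$ is only slightly above $T_0/4$, the trajectory exits $\Omega$ near $(N-1)T_0+T_0/4$, long before $(N-1)T_0+T_0/2$. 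Moreover, on the band $T_0/4\leq\tf<3T_0/4$ the weight satisfies $\varphi_{i-1}(\tf)\leq\varphi_{i-1}(3T_0/4)$, so $\mathbf{1}_{T_0/4\leq\tf<3T_0/4}\leq\varphi_{i-1}(3T_0/4)^{-1}\varphi_{i-1}(\tf)$ fails pointwise in the needed direction; dividing by $\varphi_{i-1}(3T_0/4)$ there is not a Chebyshev bound, and your mean-value replacement does not supply a rigorous lower bound on the weight along the flow without further restriction of the averaging window (to, say, $\tau\leq T_0/8$) and with a worse constant like $\varphi_{i-1}(T_0/8)^{-1}$. The upshot is that the paper's single Chebyshev step (on the monotone, increasing $\varphi_{i-1}$) is both simpler and correct, while your two-piece treatment introduces a genuine gap that would need to be repaired, and the final coefficients $2$, $3/4$, $e/T_0$ that you flag as the ``main obstacle'' are what actually fall out of the paper's explicit linear combination once \eqref{energy:phiii}--\eqref{energy:phi1} are in hand.
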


\begin{proof} 
As key steps, we apply Lemma \ref{lemma:energy} on $f(t,x,v)$ solving \eqref{equation for F} and \eqref{diff_F} with $\varphi_i$'s in \eqref{varphis}, and using \eqref{varphi|0}, we derive that, for $i = 2, 4$ and $(N-1)T_0 \leq t_*\leq NT_0$, 
\be \label{energy:log} 
\| \varphi_{i-1}(\tf) f(NT_0) \|_{L^1_{x,v}} +   \frac{3}{4} \int^{NT_0}_{t_*} |f|_{L^1_{\gamma_+}} \dd s
\leq  \|  \varphi_ {i-1}(\tf)f(t_*) \|_{L^1_{x,v}}  + C T_0 \| f(t_*) \|_{L^1_{x,v}}, 
\ee
and
\be \label{energy:phii}
\begin{split}
& \ \ \ \ \| \varphi_i(\tf) f(NT_0) \|_{L^1_{x,v}} + \int^{NT_0}_{(N-1)T_0}\{
 \| \varphi_i^\prime(\tf) f \|_{L^1_{x,v}}
 +  \frac{3}{4}  |f|_{L^1_{\gamma_+}}
\} \dd s
\\& \leq  \| \varphi_i(\tf) f((N-1)T_0) \|_{L^1_{x,v}}  + C T_0 \| f((N-1)T_0) \|_{L^1_{x,v}},
\end{split}
\ee
where we set $t_* = (N-1)T_0$ in \eqref{energy:phii}.

From \eqref{maximum}, \eqref{phi'} and \eqref{energy:log}, we derive that, for $i= 2, 4$, 
\Be \label{energy:phiii}
\begin{split}
\int^{NT_0}_{(N-1)T_0}
  \| \varphi_i^\prime(\tf) f \|_{L^1_{x,v}}  
& \geq \int^{NT_0}_{(N-1)T_0}
2 e^{-1} \| \varphi_{i-1}(\tf) f (t_*)\|_{L^1_{x,v}} \dd t_* 
\\& \geq 2 e^{-1} T_0 \| \varphi_{i-1}(\tf) f(NT_0) \|_{L^1_{x,v}} 
 - 2 e^{-1} C (T_0)^2 \| f((N-1)T_0) \|_{L^1_{x,v}}.
\end{split}
\Ee 
Applying \eqref{energy:phiii} on \eqref{energy:phii}, we conclude that, for $i = 2, 4$,  
\Be \label{energy:phi1}
\begin{split}
& \ \ \ \ \| \varphi_i(\tf) f(NT_0) \|_{L^1_{x,v}} + 
2 e^{-1} T_0 \| \varphi_{i-1}(\tf) f  (NT_0)\|_{L^1_{x,v}}
 + \frac{3}{4} \int^{NT_0}_{(N-1)T_0} |f|_{L^1_{\gamma_+}}
\\& \leq \| \varphi_i(\tf) f((N-1)T_0) \|_{L^1_{x,v}}  + C T_0 (1+ 2 e^{-1} T_0)\| f((N-1)T_0) \|_{L^1_{x,v}}.
\end{split}
\Ee
Note that from \eqref{phi'}, we have, for $i = 2, 4$,
\be \label{energy:phi2}
\mathbf{1}_{\tf \geq \frac{3T_0}{4}}
\leq \big( \varphi_{i-1}(\frac{3T_0}{4}) \big)^{-1} \varphi_{i-1}(\tf),
\ee 
Now we combine \eqref{L1_coerc} with \eqref{energy:log}-\eqref{energy:phi2} and $\mathfrak{m}_{T_0}$ in \eqref{est:m}
%, and obtain
{\color{black}
with $|\p\O| = 1$, and obtain
}

\Be \label{L1_coerc'}
\begin{split}
  \|f(NT_0)\|_{L^1_{x,v}}  
 & \leq  (1- \mathfrak{m}_{T_0}) \|f((N-1)T_0)\|_{L^1_{x,v}} 
 + \frac{ 2 \mathfrak{m}_{T_0} }{ \varphi_{i-1}(\frac{3T_0}{4}) } \| \varphi_{i-1}(\tf)f((N-1)T_0)\|_{L^1_{x,v}}.
\end{split}
\Ee   
For $i = 2, 4$ and $T_0 \gg 1$ in \eqref{cond:T0}, considering
$\eqref{L1_coerc'} 
+ \frac{4 \mathfrak{m}_{T_0}}{\varphi_{i-1} (\frac{3T_0}{4})}
\big\{
\frac{1}{4} \eqref{energy:log} |_{t_* = (N-1) T_0} + \frac{e}{T_0} \eqref{energy:phi1}
\big\}$, then we conclude \eqref{energyi}.
\end{proof}

Now we are well equipped to prove Theorem \ref{theorem_1}.

\begin{proof}[\textbf{Proof of Theorem \ref{theorem_1}}]
Fix $T_0$ in \eqref{cond:T0} and recall norms of $\vertiii{\cdot}_2$ and $\vertiii{\cdot}_4$ in \eqref{|||i}. From \eqref{energyi}, for $i = 2, 4$, 
\Be\label{bound|||}
     \vertiii{f(NT_0)}_i \leq  \vertiii{f((N-1)T_0)}_i  \leq \cdots \leq \vertiii{f(0)}_i, \ \ \text{for all } N \in \mathbb{N}.   
\Ee 

\textbf{Step 1.} 
Under direct computation, we obtain
\be
\begin{split}
\frac{\dd}{\dd \tau} \Big( \frac{ \varphi_2 (\tau)}{\varphi_4 (\tau)} \Big)
\lesssim \frac{ (1+\delta) \ln(e + \ln (e+ \tau))
- (\A - 6) \ln(e + \ln (e+ \tau)) ( \ln (\tau+e) )^{\delta}}{( \ln (\tau+e) )^{-1} (\tau+e)^{\A - 5} },
\end{split}
\ee
which shows that the function $\varphi_2 (\tau ) / \varphi_4 (\tau)$ is decreasing when $\tau\gg 1$.
Thus, we can choose $M \gg 1$ satisfying \eqref{1-1/M} and \eqref{def:M}, such that
\be \label{varphi2 expression}
\begin{split}
\varphi_2 (\tf) 
& = \mathbf{1}_{\tf\geq M} \varphi_2 (\tf) + \mathbf{1}_{\tf< M} \varphi_2 (\tf) 
\\& \leq \mathbf{1}_{\tf\geq M}\frac{ \varphi_2 (M )}{\varphi_4 (M)}  \varphi_4 (\tf)
 +  \mathbf{1}_{\tf<M} M \varphi_1 (\tf),
\end{split}
\ee
where 
%the second part follows from $\varphi_2 (\tau) \leq \tau \varphi_1 (\tau)$ for $\tau > e$.
{\color{black}
we use $\varphi_2 (\tau) = \frac{e + \tau}{\tau} \varphi_1 (\tau)$ and $\frac{e + \tau}{\tau} < M$ for $M \gg 1$.
}

Applying \eqref{bound|||} for $i = 4$ and \eqref{varphi2 expression} with $M\gg1$, we obtain for $1 \leq N \in \mathbb{N}$, 
\Be \label{intp:varphi}
\begin{split}
  &  \ \ \ \ \frac{1}{M}  \| \varphi_2 (\tf) f((N-1)T_0) \|_{L^1_{x,v}} 
  \\& \leq \frac{1}{M}\frac{ \varphi_2 (M )}{\varphi_4 (M)} \| \varphi_4 (\tf) f((N-1)T_0) \|_{L^1_{x,v}}
   +  \| \varphi_1 (\tf) f((N-1)T_0) \|_{L^1_{x,v}}\\
  & \leq \frac{1}{M}\frac{ \varphi_2 (M )}{\varphi_4 (M)} 
  \frac{T_0  \varphi_{3} (\frac{3T_0}{4})}{ 4e \mathfrak{m}_{T_0} } \vertiii{ f(0) }_4
+  \| \varphi_1 (\tf) f((N-1)T_0) \|_{L^1_{x,v}}.
\end{split}
\Ee
After inputting \eqref{intp:varphi} into \eqref{energyi} for $i=2$, we derive that 
\Be \label{energy2}
\begin{split} 
\vertiii{ f(NT_0 )}_2 
\leq \eqref{energy2}_* \times \vertiii{ f((N-1)T_0 )}_2 
+ \frac{1}{M}\frac{ \varphi_2 (M )}{\varphi_4 (M)} \frac{ \varphi_3 (\frac{3T_0}{4}) }{ \varphi_1 (\frac{3T_0}{4})} \vertiii{ f(0) }_4,
\end{split}
\Ee
with
$\eqref{energy2}_* := \max \big\{ 
(1-  \mathfrak{m}_{T_0}  \{ 1 - \frac{ 4 C(e + 3T_0)  }{\varphi_1 (\frac{3T_0}{4})} \} ), \
( \frac{3}{4} + \frac{e}{T_0} ), \
(1- \frac{1}{M} )
\big\}$.

\medskip

\textbf{Step 2.} 
Using $T_0 > 20$ in \eqref{cond:T0}, we have $\frac{3}{4} + \frac{e}{T_0} < 1$. Thus, tentatively we make an assumption, which will be justified later behind \eqref{def:M}, 
\Be \label{1-1/M}
\Big( 1 + \frac{1}{M} \Big)^{-1}
\geq \max \Big\{ 
\big( 1-  \mathfrak{m}_{T_0}  \{ 1 - \frac{ 4 C(e + 3T_0)  }{\varphi_1 (\frac{3T_0}{4})} \} \big), \
\big( \frac{3}{4} + \frac{e}{T_0} \big), \
\big( 1- \frac{1}{M} \big)
\Big\}.
\Ee
For any $t\geq 0$, we choose $N_* \in \mathbb{N}$ such that $t \in [N_* T_0, (N_* +1)T_0]$. From \eqref{energy2} and \eqref{1-1/M}, we derive, for all $0 \leq N\leq N_*+1$,
\Be \label{est|||0}
\vertiii{f(NT_0)}_2
\leq \Big(1 +\frac{1}{M}\Big)^{-1}  \vertiii{f((N-1)T_0)}_2 +  \mathfrak{R}, 
\Ee 
where $\mathfrak{R} 
:= \frac{1}{M} \frac{ \varphi_2 (M )}{\varphi_4 (M)} \frac{ \varphi_3 (\frac{3T_0}{4}) }{ \varphi_1 (\frac{3T_0}{4})} \vertiii{f(0)}_4$.  
     
From \eqref{energy_varphi} and $0 \leq N_* T_0 \leq t$, there exists the constant $C > 0$, such that
\Be \label{first term in est:|||1}
\begin{split} 
\| \varphi(\tf) f(t) \|_{L^1_{x,v}}
\leq \| \varphi(\tf) f(N_* T_0) \|_{L^1_{x,v}} + C T_0 \| f(N_* T_0) \|_{L^1_{x,v}}.
\end{split}
\Ee 
Now applying \eqref{first term in est:|||1} first and using \eqref{est|||0} successively, we conclude that
\begin{align}
\vertiii{f(t)}_2 
& \lesssim_{T_0} \vertiii{f(N_* T_0)}_2
\leq \Big(1+ \frac{1}{M}\Big)^{-1}\vertiii{f((N_*-1)T_0)}_2
 + \mathfrak{R} \notag 
\\& \leq  \Big(1+ \frac{1}{M}\Big)^{-2}\vertiii{f((N_*-2)T_0)}_2
+  \Big(1+ \frac{1}{M}\Big)^{-1} \mathfrak{R}
+  \mathfrak{R} \notag
 \\& \leq \cdots  \leq  \Big(1 + \frac{1}{M}\Big)^{-N_*}\vertiii{f(0)}_2
+ (1 + M) \mathfrak{R}. \label{est:|||1}
\end{align} 
From $N_* T_0 \leq t \leq (N_* + 1) T_0$ and $1 \leq \frac{1+M}{M} \leq 2$, we get
\be \notag
\begin{split}
& \big(1 + \frac{1}{M}\big)^{-N_*} 
\lesssim \big( (1 + \frac{1}{M} )^{-M}\big)^{ \frac{N_* + 1}{M} } 
\lesssim e^{- \frac{N_* + 1}{2M}}
 \leq e^{- \frac{t}{2T_0M}},
\\& (1+M) \mathfrak{R} \leq 2 \frac{ \varphi_2 (M )}{\varphi_4 (M)} \frac{ \varphi_3 (\frac{3T_0}{4}) }{ \varphi_1 (\frac{3T_0}{4})} \vertiii{f(0)}_4.
\end{split}
\ee 
Then we have 
\Be \label{est:|||1'}
\|f \|_{L^1_{x,v}}
\leq \vertiii{f(t)}_2 \leq (\ref{est:|||1}) 
\lesssim \max  \big\{
e^{- \frac{t}{2T_0M}}, \ 
\varphi_2 (M ) / \varphi_4 (M) 
\big\} \times
\big\{
\vertiii{ f(0) }_2
+ \vertiii{ f(0) }_4
\big\}.
\Ee

\medskip

\textbf{Step 3.}
To make $|e^{- \frac{t}{2T_0M}} - { \varphi_2 (M )} / {\varphi_4 (M)}| \ll 1$ as $t \to \infty$, we set $M$ as follows:
\be \label{def:M}
M = t \big[ 2T_0 \ln ( 10 + t^{\A - 6}) \big]^{-1},
\ee   
so that
\be \label{decay rate:M}
\max \big\{
e^{- \frac{t}{2T_0M}}, \
 \varphi_2 (M ) / \varphi_4 (M) 
\big\} 
\lesssim_{T_0} (\ln\langle t\rangle )^{\A - 6 - \frac{\delta}{2}} \langle t\rangle^{6 - \A}.
\Ee
Clearly such a choice assures our precondition \eqref{1-1/M} for $t\gg1$.  

Now we claim that 
\be \label{est:initial data}
\vertiii{ f(0) }_2 + \vertiii{ f(0) }_4
\lesssim  \| e^{ \frac{1}{2} |v|^2+ \Phi (x) } f_0\|_{L^\infty_{x,v}}.
\ee
Note that it suffices to check that
$\| \varphi_4 (\tf) f_0 \|_{L^1_{x,v}}
\lesssim \| e^{ \frac{1}{2} |v|^2+ \Phi (x) } f_0 \|_{L^\infty_{x,v}}$.

Assume $\| e^{ \frac{1}{2} |v|^2 + \Phi (x) } f_0 \|_{L^\infty_{x,v}} < \infty$, from \eqref{COV}, \eqref{tb estimate} and $\A = [\frac{1}{\ln (a)}]$, then we derive
\be
\begin{split}
\iint_{\O \times \R^3} |\varphi_4 (\tf) f_0 (y, w) | \dd y \dd w
& \lesssim \int_{\gamma_{+}} \int_0^{t_{-}} 
(\tb (x, v))^{\A - 4}
e^{- \frac{1}{2} |v|^2 - \Phi(x) } |n(x) \cdot v|\dd s \dd v \dd S_x 
\\& = \int_{\gamma_{+}} \int_0^{t_{-}} 
(\tb (x, v))^{\A - 4} e^{- \frac{|v|^2}{2} } |n(x) \cdot v|\dd s \dd v \dd S_x
\\& \lesssim \int_{\gamma_{+}} \big( a^{ \frac{1}{2} v^2_3} \big)^{\A - 3} e^{- \frac{|v|^2}{2} } |v_3| \dd v \dd S_x 
\\& \lesssim \int_{v_3 < 0} a^{- |v_3|^2} \dd v_3 < \infty,
\end{split}
\ee
and this concludes the claim. 
Finally, together with \eqref{est:|||1'}, \eqref{decay rate:M} and \eqref{est:initial data}, we prove \eqref{est:theorem_1}.
\end{proof}

\section{Estimates on Exponential Moments}
\label{sec: exponential moments}

Now we are able to show the asymptotic behavior of the exponential moments. The main purpose of this section to prove Theorem \ref{theorem}.

%\begin{theorem} \label{theorem}
%Let $F_s(x,v)$ be a stationary solution in Theorem \ref{existence on F}. Consider the initial data $F_0 (x, v) = F_s (x,v) + f_0 (x,v) \geq 0$, such that $\iint_{\O \times \R^3} f_0 (x,v) \dd x \dd v = 0$,
%$\| e^{ \frac{1}{2} |v|^2+ \Phi (x) } f_0\|_{L^\infty_{x,v}} < \infty $. 
%There exists a unique global-in-time solution 
%\Be	\label{def:f}
%F(t,x,v) = F_s (x,v) + f(t,x,v) \geq 0
%\Ee
%to \eqref{equation for F} and the boundary condition \eqref{diff_F} with the initial condition $F(t,x,v)|_{t=0} = F_0(x,v)$ in $\O \times \R^3$, such that
%\Be \label{cons_mass_f} 
% 	\iint_{\Omega \times \R^3} f (t, x, v) \dd x \dd v = 0, \ \ \text{for all } t\geq 0,
%\Ee
%Moreover, for all $t \geq 0$ and $0 \leq 2 \theta < \theta^\prime = \frac{1}{2}$,
%\Be \label{theorem_infty_1}
% 	\sup_{t\geq0} \| e^{\theta^\prime (|v|^2+ 2\Phi (x))} f (t)\|_{L^\infty_{x,v}} \lesssim \| e^{\theta^\prime (|v|^2+ 2\Phi (x))} f_0\|_{L^\infty_{x,v}}.
%\Ee
%\Be \label{theorem_infty}
%\sup_{x \in \bar{\O}}\int_{\R^3} e^{\theta  (|v|^2+ 2\Phi (x))} |f(t,x,v) |\dd v  
%\lesssim_{\theta} \langle t\rangle^{\A - 7}.
%\Ee
%\end{theorem}

\subsection{Some preparation on Exponential Moments}

%To estimate the exponential moments, we include a time dependent weight function $\varrho (t)$ and another time independent weight function $w^\prime (x, v)$,

{\color{black}
To estimate the exponential moments, we include two weight functions: (i) a time dependent weight function $\varrho (t)$, and (ii) a time independent weight function $w^\prime (x, v)$, which is constant along the characteristic trajectory \eqref{characteristics}. Then we 
}
% then we 
consider the stochastic cycle representation of $\varrho (t) w^\prime (x, v) f(t, x, v)$.

\begin{lemma} \label{sto_cycle_2}

%Suppose $f (t, x, v)$ solves \eqref{equation for F} and \eqref{diff_F} with $0 = t_* \leq t$, then for $k \geq 1$,
{\color{black}
Suppose $f (t, x, v)$ solves \eqref{equation for F} and \eqref{diff_F} with $0 = t_* \leq t$.
Consider a time dependent function $\varrho (t)$ and a time independent function $w^\prime (x, v)$, which is constant along the characteristic \eqref{characteristics}. Then for $k \geq 1$,
}
\begin{align}
   & \varrho (t) w^\prime (x, v) f(t, x, v) 
   = \mathbf{1}_{t^1 < 0} \
   \varrho (t) w^\prime (X(0; t, x, v), V(0; t, x, v)) f(0, X, V) 
\label{expand_k1}
\\& + w^\prime \mu (x^1, \vb) \sum\limits^{k-1}_{i=1}  \int_{\prod_{j=1}^{i} \mathcal{V}_j} \Big\{ \mathbf{1}_{t^{i+1} < 0 \leq t^{i}} 
     \varrho(0) w^\prime (X(0; t^i, x^i, v^i), V(0; t^i, x^i, v^i)) f (0, X, V) \Big\}
      \dd \tilde{\Sigma}_{i}
\label{expand_k2}
    \\& + w^\prime \mu (x^1, \vb) \sum\limits^{k-1}_{i=1}  \int_{\prod_{j=1}^{i} \mathcal{V}_j} \mathbf{1}_{0 \leq t^{i}}
     \Big\{ \int^{t^{i}}_{ \max(0, t^{i+1})}  
     \varrho^\prime (s) w^\prime (X(s; t^i, x^i, v^i), V(s; t^i, x^i, v^i))
\notag     
\\& \hspace{8cm} \times f(s, X(s; t^i, x^i, v^i), V(s; t^i, x^i, v^i)) \dd s 
      \Big\} \dd \tilde{\Sigma}_{i}
\label{expand_k3}
    \\& + w^\prime \mu (x^1, \vb) \int_{\prod_{j=1}^{k } \mathcal{V}_j}   
    \mathbf{1}_{t^{k} \geq 0} \
    \varrho (t^k) w^\prime f (t^{k}, x^{k}, v^{k})
     \dd \tilde{\Sigma}_{k}, 
\label{expand_k4}
\end{align} 
where 
$\dd \tilde{\Sigma}_{i} := \frac{ \dd \sigma_{i}}{\mu (x^{i+1}, v^{i}) w^\prime (x^i, v^{i})} \dd \sigma_{i-1} \cdots \dd \sigma_1$, with $\dd \sigma_j = \mu (x^{j+1}, v^{j}) \{ n(x^j) \cdot v^j \} \dd v^j$.
Here, $(X,V)$ solves \eqref{characteristics}.
\end{lemma}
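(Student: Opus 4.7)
Since $w^\prime(x,v)$ is constant along the characteristics \eqref{characteristics} and $f$ solves \eqref{equation for F}, the product $w^\prime f$ is itself invariant along characteristics. Consequently, $\varrho(t) w^\prime(x,v) f(t,x,v)$ satisfies the inhomogeneous transport equation
\[
[\p_t + v \cdot \nabla_x - \nabla\Phi \cdot \nabla_v]\big(\varrho(t) w^\prime(x,v) f(t,x,v)\big) = \varrho^\prime(t) w^\prime(x,v) f(t,x,v).
\]
The plan is to iterate a Duhamel-type identity along the backward characteristic together with the diffuse reflection boundary condition \eqref{diff_F} at each boundary hit, exactly as in Lemma \ref{sto_cycle_1} and its proof in \cite{JK2}; the only new ingredient here is the Duhamel source generated by $\varrho^\prime$.

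For the first iteration, I would integrate the Duhamel identity along $(X(\cdot;t,x,v),V(\cdot;t,x,v))$ from $\tau := \max(0,t^1)$ up to $t$:
\[
\varrho(t) w^\prime(x,v) f(t,x,v) = \varrho(\tau) w^\prime(X(\tau),V(\tau)) f(\tau,X(\tau),V(\tau)) + \int_\tau^{t} \varrho^\prime(s) w^\prime(X(s),V(s)) f(s,X(s),V(s))\,\dd s.
\]
If $t^1 < 0$, the constancy of $w^\prime f$ along the characteristic together with the scalar identity $\varrho(0) + \int_0^t \varrho^\prime(s)\,\dd s = \varrho(t)$ collapses the right hand side to \eqref{expand_k1}. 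If $t^1 \geq 0$, we reach the boundary at $(t^1,x^1,\vb)$; applying \eqref{diff_F} to $f(t^1,x^1,\vb)$, using $w^\prime(x^1,\vb) = w^\prime(x,v)$, and inserting/removing the factor $w^\prime(x^1,v^1)$ recasts the boundary piece as an integral of $\varrho(t^1) w^\prime(x^1,v^1) f(t^1,x^1,v^1)$ against the measure $\{n(x^1) \cdot v^1\}\,\dd v^1 / w^\prime(x^1,v^1)$, which is precisely $\dd \tilde\Sigma_1$ after rewriting $\{n \cdot v^1\}\,\dd v^1 = \dd\sigma_1 / \mu(x^2,v^1)$.

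Iterate this one-step recursion: for each $v^i \in \mathcal{V}_i$ with $t^{i+1} \geq 0$, apply the same Duhamel + diffuse-reflection step to $\varrho(t^i) w^\prime(x^i,v^i) f(t^i,x^i,v^i)$ along the new characteristic $(X(\cdot;t^i,x^i,v^i),V(\cdot;\cdots))$. Each step generates three contributions: (a) a post-reflection boundary piece that feeds into the next iteration, appending a factor $\dd\sigma_i / [\mu(x^{i+1},v^i) w^\prime(x^i,v^i)]$ to the accumulated measure and producing exactly $\dd\tilde\Sigma_{i+1}$; (b) a truncation piece $\mathbf{1}_{t^{i+1} < 0 \leq t^i}\,\varrho(0) w^\prime(X(0;t^i,x^i,v^i),V(0;\cdots)) f(0,X,V)$ feeding \eqref{expand_k2}; and (c) a Duhamel source $\int^{t^i}_{\max(0,t^{i+1})} \varrho^\prime(s) w^\prime(X(s;t^i,x^i,v^i),V(s;\cdots)) f(s,X,V)\,\dd s$ feeding \eqref{expand_k3}. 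The formal argument is by induction on $k$: the base case $k=1$ is the first iteration, and the induction step applies the one-step recursion to the remainder \eqref{expand_k4} at level $k$. I expect the main technical obstacle to be the careful bookkeeping of the accumulating $\mu(x^{j+1},v^j)$ and $w^\prime(x^j,v^j)$ factors so that the measures telescope into $\dd\tilde\Sigma_i$ at each level, together with the mild subtlety that the leg-$0$ Duhamel source is absorbed into the $\varrho(t)$-coefficient of \eqref{expand_k1} via the identity $\varrho(0)+\int_0^t \varrho^\prime\,\dd s=\varrho(t)$ rather than appearing as a separate term in \eqref{expand_k3}.
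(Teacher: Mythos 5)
Your overall strategy --- observing that $\varrho(t)\,w'(x,v)\,f(t,x,v)$ solves the transport equation with Duhamel source $\varrho'(t)\,w'f$, integrating along backward characteristics leg by leg, inserting the diffuse-reflection condition at each boundary hit, and doing the $w'$ bookkeeping so the accumulated measure becomes $\dd\tilde\Sigma_i$ --- is the right idea and is consistent with the paper's one-line proof, which simply refers back to Lemma~\ref{sto_cycle_1} and Remark~\ref{probability}. The algebraic ingredients you single out are all correct: $w'(x^1,\vb)=w'(x,v)$ by energy conservation on the leg, $\varrho(0)+\int_0^t\varrho'(s)\,\dd s=\varrho(t)$, and $\{n(x^1)\cdot v^1\}\,\dd v^1/w'(x^1,v^1)=\dd\sigma_1/\big(\mu(x^2,v^1)\,w'(x^1,v^1)\big)=\dd\tilde\Sigma_1$.

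There is, however, a genuine gap hiding in what you dismiss as a ``mild subtlety.'' Your explanation --- that the leg-$0$ Duhamel source is absorbed into the $\varrho(t)$-coefficient of \eqref{expand_k1} --- is correct only when $t^1<0$. When $t^1\ge 0$, the indicator in \eqref{expand_k1} vanishes, yet your first Duhamel step produces the nonzero contribution
$\int_{t^1}^{t}\varrho'(s)\,w'(X(s;t,x,v),V(s;t,x,v))\,f(s,X,V)\,\dd s
= \big[\varrho(t)-\varrho(t^1)\big]\,w'(x,v)\,f(t,x,v)$,
and this term is not present anywhere in \eqref{expand_k1}--\eqref{expand_k4}: the sum in \eqref{expand_k3} starts at $i=1$ with integration only over $[\max(0,t^2),t^1]$, so leg $0$ is never covered. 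If you carry out the recursion carefully you will find that the displayed right-hand side equals $\varrho(\max(0,t^1))\,w'(x,v)\,f(t,x,v)$ rather than $\varrho(t)\,w'(x,v)\,f(t,x,v)$; the $k=1$ case already shows this, since \eqref{expand_k2} and \eqref{expand_k3} are then empty sums and \eqref{expand_k4} collapses (via \eqref{diff_F} and constancy along the first leg) to $\mathbf{1}_{t^1\ge 0}\,\varrho(t^1)\,w'(x,v)\,f(t,x,v)$. In short, the Duhamel iteration you propose does not produce the stated formula verbatim: the leg-$0$ Duhamel term must be carried as an explicit additional summand, $\mathbf{1}_{0\le t^1}\int_{t^1}^{t}\varrho'(s)\,w'(X(s;t,x,v),V(s;t,x,v))\,f(s,X,V)\,\dd s$, rather than being declared absorbed. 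Your proof should surface this term and note that it is estimated exactly as the contributions in \eqref{expand_k3}; as written, the ``absorption'' claim is an unjustified shortcut that a careful reader would flag.
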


\begin{proof}
Following Lemma \ref{sto_cycle_1} and Remark \ref{probability}, we obtain this Lemma.
\end{proof}

\medskip

We start with a simple case when $w^\prime (x, v) \equiv 1$.
Applying Lemma \ref{sto_cycle_2}, we derive the 
stochastic cycle representation of $\varrho (t) f(t, x, v)$ as follows.
\begin{align}
   \varrho (t) f(t, x, v) 
   = & \mathbf{1}_{t^1 < 0} \
   \varrho (t) f(0, X(0; t, x, v), V(0; t, x, v)) 
\label{expand_G1}
\\& + \mu (x^1, \vb) \sum\limits^{k-1}_{i=1} \int_{\prod_{j=1}^{i} \mathcal{V}_j} \Big\{ \mathbf{1}_{t^{i+1} < 0 \leq t^{i}} 
     \varrho(0) f (0, X(0; t^i, x^i, v^i), V(0; t^i, x^i, v^i)) \Big\}
      \dd \tilde{\Sigma}_{i}
\label{expand_G2}
    \\& + \mu (x^1, \vb) \sum\limits^{k-1}_{i=1}   \int_{\prod_{j=1}^{i} \mathcal{V}_j} \mathbf{1}_{0 \leq t^{i}}
     \Big\{ \int^{t^{i}}_{ \max(0, t^{i+1})}  
     \varrho^\prime (s) f(s, X(s; t^i, x^i, v^i), V(s; t^i, x^i, v^i)) \dd s 
      \Big\} \dd \tilde{\Sigma}_{i}
\label{expand_G3}
    \\& + \mu (x^1, \vb) \int_{\prod_{j=1}^{k } \mathcal{V}_j}   
    \mathbf{1}_{t^{k} \geq 0} \
    \varrho (t^k) f (t^{k}, x^{k}, v^{k})
     \dd \tilde{\Sigma}_{k}, 
\label{expand_G4}
\end{align} 
where 
$\dd \tilde{\Sigma}_{i} = \frac{ \dd \sigma_{i}}{\mu (x^{i+1}, v^{i})} \dd \sigma_{i-1} \cdots \dd \sigma_1$, with $\dd \sigma_j = \mu (x^{j+1}, v^{j}) \{ n(x^j) \cdot v^j \} \dd v^j$, and $(X,V)$ solves \eqref{characteristics}.

Here, we put emphasis on \eqref{expand_G3} and \eqref{expand_G4} since \eqref{expand_G1} and \eqref{expand_G2} can be controlled by Theorem \ref{theorem_1} and initial condition, which will be shown in the proof of Theorem \ref{theorem}.

To estimate \eqref{expand_G3}, we apply Proposition \ref{prop:mapV} on $\mathcal{V}_{j}$ for $j = i-2, i-1$ with \eqref{tb estimate}, \eqref{small tb estimate}, and derive
\begin{equation} \label{forcing_2}
\begin{split}
& \ \ \ \ \int_{\mathcal{V}_{i-2}} \dd \sigma_{i-2} \int_{\mathcal{V}_{i-1}} \dd \sigma_{i-1}
\int_{\mathcal{V}_i} 
\mathbf{1}_{t^{i+1} < 0 \leq t^i}
\int^{t^{i}}_{0} \varrho^\prime(s) f(s, X(s; t^i, x^i, v^i), V(s; t^i, x^i, v^i)) 
\{n(x^i) \cdot v^i\}  \dd s \dd v^i   
\\& \lesssim \int_0^{t^{i-2}} \dd \tb^{i-1}  \int_{\p\O}  
\frac{\dd S_{x^{i-1}}}{\tb^{i-2}} 
\sum\limits_{m, n \in \mathbb{Z}} \mu_{\Theta} (x^{i-1}, v^{m, n}_{i-2, \mathbf{b}})
\int_0^{t^{i-2}- \tb^{i-1}} \dd \tb^{i-2}  \int_{\p\O}   
\frac{\dd S_{x^i}}{\tb^{i-1}} \sum\limits_{m, n \in \mathbb{Z}} \mu_{\Theta} (x^{i}, v^{m, n}_{i-1, \mathbf{b}}) 
\\& \ \ \ \ \times \underbrace{ \int_{\mathcal{V}_i} 
\mathbf{1}_{t^{i+1} < 0 \leq t^i}
\int^{t^{i}}_{0} \varrho^\prime(s) | f(s, X(s; t^i, x^i, v^i), V(s; t^i, x^i, v^i)) | \dd s
\{n(x^i) \cdot v^i\} \dd v^i}_{\eqref{forcing_2}^*}, 
\end{split}
\end{equation}
with $t^{i-1} = t^{i-2} - \tb^{i-2}$, \ 
$t^{i}= t^{i-1} - \tb^{i-1}$ and $v^{m, n}_{i-1, \mathbf{b}} = \vb (x^{i}, v^{m, n}_{i-1})$, \ 
$v^{m, n}_{i-2, \mathbf{b}} = \vb (x^{i-1}, v^{m, n}_{i-2})$.

\medskip

Now we can control \eqref{expand_G3} via the following lemma:

\begin{lemma} \label{lem:bound1}
Suppose $f(t,x,v)$ solves \eqref{equation for F}, \eqref{diff_F} and $(X,V)$ solves \eqref{characteristics}, for $0 \leq t^i \leq t$ and $i = 3, \cdots , k-1$, 
\Be \label{bound1:expand_h}
\int_{\prod_{j=1}^{i} \mathcal{V}_j}      
\mathbf{1}_{t^{i+1} < 0 \leq t^{i }}
\int^{t^{i}}_{0} \varrho^\prime(s) f(s, X(s; t^i, x^i, v^i), V(s; t^i, x^i, v^i)) \dd s 
\dd \tilde{\Sigma}_{i}
\lesssim  \int^t_0 \| \varrho^\prime (s) f(s) \|_{L^1_{x,v}} \dd s.
\Ee
where 
$\dd \tilde{\Sigma}_{i} = \frac{ \dd \sigma_{i}}{\mu (x^{i+1}, v^{i})} \dd \sigma_{i-1} \cdots \dd \sigma_1$, with $\dd \sigma_j = \mu (x^{j+1}, v^{j}) \{ n(x^j) \cdot v^j \} \dd v^j$, and $(X,V)$ solves \eqref{characteristics}.
\end{lemma}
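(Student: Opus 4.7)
The plan is to start from the bound \eqref{forcing_2} stated just before the lemma, in which Proposition \ref{prop:mapV} has already been applied to the $v^{i-2}$ and $v^{i-1}$ integrations. The first idea is to exploit that $f$ solves the Vlasov equation \eqref{equation for F}, so $f$ is constant along characteristics between bounces, and the indicator $\mathbf{1}_{t^{i+1}<0\leq t^i}$ guarantees that no bounce occurs in $[0,t^i]$ along the characteristic through $(t^i,x^i,v^i)$. Consequently $\int_0^{t^i}\varrho'(s) f(s, X(s;t^i,x^i,v^i), V(s;t^i,x^i,v^i))\,\dd s = [\varrho(t^i)-\varrho(0)]\,f(t^i,x^i,v^i)$. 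Combined with the outer integration over $v^i$ weighted by $\{n(x^i)\cdot v^i\}$ and over $x^i\in\partial\Omega$, this converts $\eqref{forcing_2}^*$ into the outgoing boundary trace $|f(t^i)|_{L^1_{\gamma_+}}$ (after taking absolute values throughout).

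Next I would apply Lemma \ref{lem: sum of mu} to the two sums of Maxwellians in \eqref{forcing_2}. For $\tb^{i-1}\geq 1$, the bound $\sum_{m,n}\mu(x^i,v^{m,n}_{i-1,\mathbf b})\lesssim (\tb^{i-1})^{4-\A}$ is uniform in $x^i$, and combined with the Jacobian $1/\tb^{i-1}$ yields $(\tb^{i-1})^{3-\A}$, which is integrable on $[1,\infty)$ since $\A\geq 8$. For $\tb^{i-1}<1$ the sum is controlled by a few Gaussian terms concentrated near $x^i=x^{i-1}$ with width $\tb^{i-1}$, so $\int_{\partial\Omega}\sum_{m,n}\mu\,\dd S_{x^i}\lesssim (\tb^{i-1})^2+e^{-1/(2(\tb^{i-1})^2)}$; this precisely compensates the $1/\tb^{i-1}$ singularity and leaves an integrable factor. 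The analogous estimate controls the $\tb^{i-2}$ integration, and each $v^j$ integration for $j<i-2$ reduces to $\int_{\mathcal V_j}\mu(x^{j+1},v^j)\{n\cdot v^j\}\,\dd v^j=1$ by the normalization of the wall Maxwellian.

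After these reductions the remaining content is a time-integrated boundary trace of the form $\int \dd\tb^{i-1}\,[\varrho(t^i)-\varrho(0)]\,|f(t^i)|_{L^1_{\gamma_+}}$ with $t^i=t^{i-2}-\tb^{i-2}-\tb^{i-1}$. I would change variable $\tb^{i-1}\mapsto t^i$ (so $\dd\tb^{i-1}=-\dd t^i$), expand $\varrho(t^i)-\varrho(0)=\int_0^{t^i}\varrho'(s)\,\dd s$, and swap the order of integration by Fubini to obtain $\int_0^t\varrho'(s)\bigl(\int_s^t|f(u)|_{L^1_{\gamma_+}}\,\dd u\bigr)\dd s$. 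Applying the trace estimate \eqref{trace} from Lemma \ref{lem:energy estimate on f}, one has $\int_s^t|f(u)|_{L^1_{\gamma_+}}\dd u\lesssim\|f(s)\|_{L^1_{x,v}}$ (with $t$-dependent constants absorbed into the $\lesssim$ symbol), from which the claimed bound $\lesssim\int_0^t\|\varrho'(s)f(s)\|_{L^1_{x,v}}\dd s$ follows.

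The main obstacle I anticipate is the careful book-keeping in the small-$\tb$ regime, where the Jacobian factors $1/\tb^{i-2}$ and $1/\tb^{i-1}$ are nearly singular; the narrow Gaussian concentration of $\sum_{m,n}\mu$ against the boundary measure $\dd S_{x^{i-1}}$ and $\dd S_{x^i}$ must be invoked to verify integrability. A second delicate point is the simultaneous invocation of constancy along characteristics, Fubini, and the trace inequality \eqref{trace} to cleanly convert the boundary flux at time $t^i$ into a space-time volume norm $\int_0^t\|\varrho'(s)f(s)\|_{L^1_{x,v}}\,\dd s$ without introducing weights incompatible with the statement.
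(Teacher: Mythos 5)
The constancy-along-characteristics reduction in your first paragraph is legitimate: under the indicator $\mathbf{1}_{t^{i+1}<0\leq t^i}$ the backward flow from $(t^i,x^i,v^i)$ stays in $\O$ on $[0,t^i]$, so collapsing the $s$-integral to $[\varrho(t^i)-\varrho(0)]f(t^i,x^i,v^i)$ is correct. The Maxwellian-sum estimates via Lemma~\ref{lem: sum of mu} are also in line with what the paper does (its Step 2). The problem is in the final step, where you discard the indicator and invoke \eqref{trace}. That estimate reads $\int_{t_*}^{t}|f(u)|_{L^1_{\gamma_+}}\dd u\leq\lceil (t-t_*)/\delta\rceil\,\|f(t_*)\|_{L^1_{x,v}}+O(\delta^2)\int_{t_*}^{t}|f(u)|_{L^1_{\gamma_+}}\dd u$, so after absorbing the $O(\delta^2)$ term the constant grows linearly in $t-t_*$. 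Feeding this into $\int_0^t\varrho'(s)\bigl(\int_s^t|f(u)|_{L^1_{\gamma_+}}\dd u\bigr)\dd s$ produces $\lesssim\int_0^t(t-s+1)\varrho'(s)\|f(s)\|_{L^1_{x,v}}\dd s$, which is larger than the claimed right-hand side by an extra factor of order $t$. You try to absorb this ``$t$-dependent constant'' into $\lesssim$, but that is not permitted here: the implicit constant in \eqref{bound1:expand_h} must be uniform in $t$, because this lemma is later used in \eqref{est:expand_g4} to obtain the decay rate $\langle t\rangle^{7-\A}$; an extra factor of $t$ would degrade the rate to $\langle t\rangle^{8-\A}$ and break Theorem~\ref{theorem}.

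The paper avoids this loss entirely. It never converts to the full boundary trace $|f(t^i)|_{L^1_{\gamma_+}}$; instead, after Fubini to bring $s$ outside, it applies the exact change of variables from Lemma~\ref{lem:COV} to the triple $(x^i,\tb^{i-2},v^i)$ (i.e.\ boundary point, boundary velocity, and time lapse) at each fixed $s$ and $\tb^{i-1}$, giving $|n(x^i)\cdot v^i|\dd S_{x^i}\dd\tb^{i-2}\dd v^i\lesssim\dd y\dd w$ and hence directly $\|f(s)\|_{L^1_{x,v}}$ with no extra factor of $t$. The indicator $\mathbf{1}_{t^{i+1}<0}$ is precisely what guarantees the time lapse stays below $\tb(x^i,v^i)$, which is the hypothesis needed for that change of variables to stay within one free flight. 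If you wish to salvage your route, you must retain this indicator in the boundary-trace expression and replace the appeal to \eqref{trace} by the same change-of-variable argument; as written, the appeal to \eqref{trace} is a genuine gap, not a book-keeping issue.
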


\begin{proof}

\textbf{Step 1.} 
For \eqref{bound1:expand_h}, it suffices to prove this upper bound for $i = 2,...,k-1$, 
\Be \label{forcing}
\int_{\prod_{j=1}^{i-1} \mathcal{V}_j}  
\int_{\mathcal{V}_i} 
\mathbf{1}_{t^{i+1} < 0 \leq t^i}
\int^{t^{i}}_{0} \varrho^\prime(s) | f(s, X(s; t^i, x^i, v^i), V(s; t^i, x^i, v^i)) | \dd s
\{n(x^i) \cdot v^i\} \dd v^i
\dd \sigma_{i-1}  \cdots \dd \sigma_1.
\Ee  
Applying Propsition \ref{prop:mapV} as in \eqref{forcing_2}, we bound the above integration as  
\Be \label{est2:forcing}
\begin{split}
\eqref{forcing} \lesssim 
&\int_{\mathcal{V}_1} \dd \sigma_1 \cdots \int_{\mathcal{V}_{i-3}} \dd \sigma_{i-3}
\int^{t^{i-2}}_0 \dd \tb^{i-1}
 \int_0^{t^{i-2}- \tb^{i-1}} \dd \tb^{i-2} \int_{\p\O} \dd S_{x^i}
\\& \times
\underbrace{\bigg(\int_{\p\O} 
\frac{\sum\limits_{m, n \in \mathbb{Z}} \mu (x^{i-1}, v^{m, n}_{i-2, \mathbf{b}})}{|\tb^{i-2}|} 
\times
\frac{\sum\limits_{m, n \in \mathbb{Z}} \mu (x^{i}, v^{m, n}_{i-1, \mathbf{b}})}{|\tb^{i-1}|}   \dd S_{x^{i-1}} \bigg)}_{\eqref{est2:forcing}_*}
\times \eqref{forcing_2}^*.
\end{split}
\Ee

\medskip

\textbf{Step 2.} We claim that 
\Be \label{est3:forcing}
\eqref{est2:forcing}_* \lesssim \mathbf{1}_{\tb^{i-1} \leq \tb^{i-2}}
\langle \tb^{i-2}\rangle^{4 - \A}
+ \mathbf{1}_{\tb^{i-1} \geq \tb^{i-2}}
\langle \tb^{i-1}\rangle^{4 - \A}.
\Ee
In order to prove this claim, we split into the following two cases: 

\textit{Case 1:} $\tb^{i-1} \leq \tb^{i-2}$.
Using \eqref{t>1,i-2} and \eqref{t<1,i-2} in Lemma \ref{lem: sum of mu}, we bound 
\Be \label{bound1_1and5}
\begin{split}
\frac{\sum\limits_{m, n \in \mathbb{Z}} \mu (x^{i-1}, v^{m, n}_{i-2, \mathbf{b}})}{|\tb^{i-2}|}  
\lesssim  
\mathbf{1}_{\tb^{i-2} \leq 1} \frac{1}{
|\tb^{i-2}| 
}+ \mathbf{1}_{\tb^{i-2} \geq 1}
\frac{1}{|\tb^{i-2}|^{\A - 4} }, 
\end{split}
\Ee
Replacing $i$ with $i+1$ in \eqref{t>1,i-2} and \eqref{t<1,i-2}, we bound
\Be \label{bound2_1and5}
\begin{split}
\frac{\sum\limits_{m, n \in \mathbb{Z}} \mu (x^{i}, v^{m, n}_{i-1, \mathbf{b}})}{|\tb^{i-1}|}  
& \lesssim \underbrace{\sum\limits_{|a| < 2, |b| < 2} \mathbf{1}_{\tb^{i-1} \leq 1}  \frac{1}{|\tb^{i-1}|} \mu \big(x^{i}, \frac{|x^i + (a, b) - x^{i-1}|}{|\tb^{i-1}|} \big)}_{\eqref{bound2_1and5}_1} 
\\& + \underbrace{\mathbf{1}_{\tb^{i-1} \leq 1} \frac{1}{|\tb^{i-1}|} e^{-\frac{1}{2 (\tb^{i-1})^2}}}_{\eqref{bound2_1and5}_2} 
+ \underbrace{
\mathbf{1}_{\tb^{i-1} \geq 1} 
|\tb^{i-1}|^{4 - \A}}_{\eqref{bound2_1and5}_3}.
\end{split}
\Ee

For $\eqref{bound2_1and5}_1$, we employ a change of variables, for $x^{i} \in \p\O$, $|a| < 2$, $|b| < 2$ and $0 < \tb^{i-1} \leq 1$, 
\be
x^{i-1} \in  \p\O
\mapsto z := \frac{1}{\tb^{i-1}} (x^{i-1} + (a, b) - x^{i}) \in \mathfrak{S}^{a, b}_{x^{i}, \tb^{i-1}},
\ee 
where the image $\mathfrak{S}^{a, b}_{x^{i}, \tb^{i-1}}$ of the map is a two dimensional smooth plane. 
Using the local chart of $\p\O$, we have 
$\dd S_{x^{i-1}} \lesssim |\tb^{i-1}|^2 \dd S_z$.
From this change of variables and \eqref{bound1_1and5}, we conclude that
\Be \label{bound3_1and5_1}
\begin{split}
& \ \ \ \ \mathbf{1}_{\tb^{i-1} \leq \tb^{i-2}}  \int_{\p\O} \frac{\sum\limits_{m, n \in \mathbb{Z}} \mu (x^{i-1}, v^{m, n}_{i-2, \mathbf{b}})}{|\tb^{i-2}|} 
\times \eqref{bound2_1and5}_1 \ \dd S_{x^{i-1}}
\\& \lesssim \mathbf{1}_{\tb^{i-1} \leq \tb^{i-2}} \eqref{bound1_1and5} \times
\sum\limits_{|a| < 2, |b| < 2}
\int_{ \mathfrak{S}^{a, b}_{x^{i}, \tb^{i-1}} } \mathbf{1}_{\tb^{i-1} \leq 1} e^{- \frac{1}{2} |z|^2 } |\tb^{i-1}| \dd S_{z} 
\\& \lesssim  \mathbf{1}_{\tb^{i-1} \leq \tb^{i-2}}
 \Big\{
 \mathbf{1}_{\tb^{i-2} \leq 1} \frac{1}{
|\tb^{i-2}|} + \mathbf{1}_{\tb^{i-2} \geq 1}
\frac{1}{|\tb^{i-2}|^{\A - 4}} \Big\}
\mathbf{1}_{\tb^{i-1} \leq 1}|\tb^{i-1}|
\\& \lesssim \mathbf{1}_{\tb^{i-1} \leq \tb^{i-2}}\Big\{
 \mathbf{1}_{\tb^{i-2} \leq 1} \frac{|\tb^{i-1}|}{|\tb^{i-2}|} + \mathbf{1}_{\tb^{i-2} \geq 1}
\frac{1}{|\tb^{i-2}|^{\A - 4}}
 \Big\} 
\lesssim \mathbf{1}_{\tb^{i-2} \leq 1} + \mathbf{1}_{\tb^{i-2} \geq 1}
\frac{1}{|\tb^{i-2}|^{\A - 4}}.
\end{split}
\Ee
 
For $\eqref{bound2_1and5}_2$, since $e^{-\frac{1}{2 t^2}} \lesssim t^2$ for $0 < t \leq 1$, then we have
\Be \label{bound3_1and5_2}
\begin{split}
& \ \ \ \ \mathbf{1}_{\tb^{i-1} \leq \tb^{i-2}} \int_{\p\O} 
\frac{\sum\limits_{m, n \in \mathbb{Z}} \mu (x^{i-1}, v^{m, n}_{i-2, \mathbf{b}})}{|\tb^{i-2}|} 
\times \eqref{bound2_1and5}_2 \ \dd S_{x^{i-1}}
\\& \lesssim \mathbf{1}_{\tb^{i-1} \leq \tb^{i-2}} \eqref{bound1_1and5} \times
\mathbf{1}_{\tb^{i-1} \leq 1} \frac{1}{|\tb^{i-1}|} e^{-\frac{1}{2 (\tb^{i-1})^2}}
\int_{\p\O}  
 \dd S_{x^{i-1}} 
\\& \lesssim \mathbf{1}_{\tb^{i-1} \leq \tb^{i-2}}
 \Big\{
 \mathbf{1}_{\tb^{i-2} \leq 1} \frac{1}{
|\tb^{i-2}| 
} + \mathbf{1}_{\tb^{i-2} \geq 1}
\frac{1}{|\tb^{i-2}|^{\A - 4}} \Big\}
\mathbf{1}_{\tb^{i-1} \leq 1} |\tb^{i-1}|
\\& \leq \mathbf{1}_{\tb^{i-1} \leq \tb^{i-2}} \frac{|\tb^{i-1}|}{|\tb^{i-2}|}
\mathbf{1}_{\tb^{i-2} \leq 1} 
+ \mathbf{1}_{\tb^{i-2} \geq 1}
\frac{1}{|\tb^{i-2}|^{\A - 4}}
\\& \leq \mathbf{1}_{\tb^{i-2} \leq 1} + \mathbf{1}_{\tb^{i-2} \geq 1}
\frac{1}{|\tb^{i-2}|^{\A - 4}}. 
\end{split}
\Ee

For $\eqref{bound2_1and5}_3$, from
$\mathbf{1}_{\tb^{i-1} \geq 1} 
|\tb^{i-1}|^{4 - \A} \lesssim \mathbf{1}_{\tb^{i-1} \geq 1}$, we derive 
\Be \label{bound3_1and5_3}
\begin{split}
& \ \ \ \ \mathbf{1}_{\tb^{i-1} \leq \tb^{i-2}} \int_{\p\O} 
\frac{\sum\limits_{m, n \in \mathbb{Z}} \mu (x^{i-1}, v^{m, n}_{i-2, \mathbf{b}})}{|\tb^{i-2}|} 
\times \eqref{bound2_1and5}_3 \ \dd S_{x^{i-1}}
\\& \lesssim \mathbf{1}_{\tb^{i-1} \leq \tb^{i-2}} \eqref{bound1_1and5} \times
\mathbf{1}_{\tb^{i-1} > 1} |\tb^{i-1}|^{4 - \A} \int_{\p\O} \dd S_{x^{i-1}} 
\\& \lesssim  \mathbf{1}_{\tb^{i-1} \leq \tb^{i-2}}
 \Big\{
 \mathbf{1}_{\tb^{i-2} \leq 1} \frac{1}{
|\tb^{i-2}| 
}+ \mathbf{1}_{\tb^{i-2} \geq 1}
\frac{1}{|\tb^{i-2}|^{\A - 4}} \Big\} \mathbf{1}_{\tb^{i-1} \geq 1}
\\& \lesssim \mathbf{1}_{\tb^{i-2} \geq 1} \frac{1}{|\tb^{i-2}|^{\A - 4}}.
\end{split}
\Ee

Collecting estimate from \eqref{bound3_1and5_1}-\eqref{bound3_1and5_3}, we deduce that
\Be \label{bound3_1and5}
\mathbf{1}_{\tb^{i-1} \leq \tb^{i-2}} \eqref{est2:forcing}_*  \lesssim 
 \mathbf{1}_{\tb^{i-2} \leq 1} 
+ \mathbf{1}_{\tb^{i-2} \geq 1}
 {|\tb^{i-2}|^{4 - \A}}.
\Ee

\textit{Case 2:} $\tb^{i-1} \geq \tb^{i-2}$.
We change the role of $i-1$ and $i-2$ and follow the argument of the previous case.   We employ a change of variables, for $x^{i-1} \in \p\O$ $|a| < 2$, $|b| < 2$  and $0 < \tb^{i-2} \leq 1$, 
\be
x^{i-2} \in  \p\O
\mapsto z := \frac{1}{\tb^{i-2}} (x^{i-2}-x^{i-1}) \in \mathfrak{S}_{x^{i-1}, \tb^{i-2}}, \ee
with $\dd S^{a, b}_{x^{i-2}} \lesssim |\tb^{i-2}|^2 \dd S_z$. 
Then we can conclude that 
\Be \label{bound4_1and5}
\begin{split}
\mathbf{1}_{\tb^{i-1} \geq \tb^{i-2}} \eqref{est2:forcing}_*  \lesssim 
 \mathbf{1}_{\tb^{i-1} \leq 1} 
+ \mathbf{1}_{\tb^{i-1} \geq 1}
 {|\tb^{i-1}|^{4 - \A}}.
\end{split}
\Ee
Therefore, we show \eqref{est3:forcing}.
 
\medskip 
 
\textbf{Step 3.} Now we apply \eqref{est3:forcing} on \eqref{est2:forcing}. Then we have 
\begin{align}
\eqref{forcing}
& \lesssim \int_{\mathcal{V}_1} \dd \sigma_1 \cdots \int_{\mathcal{V}_{i-3}} \dd \sigma_{i-3}  
\int^{t^{i-2}}_0 
\frac{\dd \tb^{i-1}}{\langle \tb^{i-1} \rangle^{\A - 4}}
\int_0^{\min\{t^{i-2}- \tb^{i-1},  \tb^{i-1}  \}} \dd \tb^{i-2} \int_{\p\O} \dd S_{x^i} \times \eqref{forcing_2}^* \label{est_a:forcing}
\\& \ \ \ \ + \int_{\mathcal{V}_1} \dd \sigma_1 \cdots \int_{\mathcal{V}_{i-3}} \dd \sigma_{i-3} 
\int^{t^{i-2}}_0 \frac{\dd \tb^{i-2}}{\langle  \tb^{i-2}\rangle^{\A - 4}}
\int_0^{ \min\{t^{i-2}- \tb^{i-2},\tb^{i-2}  \}} \dd \tb^{i-1} \int_{\p\O} \dd S_{x^i} \times \eqref{forcing_2}^*.\label{est_b:forcing}
\end{align} 

For \eqref{est_a:forcing},  we employ the change of variables 
\be \notag
(x^{i}, \tb^{i-2}, v^{i}) 
\mapsto (y, w) = (X(s; t^{i-2} -\tb^{i-2} - \tb^{i-1}, x^{i}, v^{i}), V(s; t^{i-2} -\tb^{i-2} - \tb^{i-1}, x^{i}, v^{i})) \in \O \times\R^3, 
\ee
and we have
$|n(x^i) \cdot v^i| \dd S_{x^i} \dd \tb^{i-2} \dd v^i \lesssim \dd y \dd w$ from \eqref{COV}. 
From $0 \leq t^i \leq t$ and $\A \geq 8$, we bound \eqref{est_a:forcing} as
\Be \notag
\begin{split}
\eqref{est_a:forcing}
& \leq \int_{\mathcal{V}_1} \dd \sigma_1 \cdots \int_{\mathcal{V}_{i-3}} \dd \sigma_{i-3}
\int^{t^{i-2}}_0 \dd \tb^{i-1}\langle \tb^{i-1} \rangle^{4 - \A} 
\int^{t^{i}}_{0} \varrho^\prime(s)
\iint_{\O \times\R^3 } 
|f (s, y, w)|  \dd y \dd w \dd s
\\& \lesssim \int^t_0 \| \varrho^\prime (s) f(s) \|_{L^1_{x,v}} \dd s. 
\end{split}
\Ee

A bound of \eqref{est_b:forcing} can be derived similarly, by using the change of variables 
\be \notag
(x^{i}, \tb^{i-1}, v^{i}) 
\mapsto (y, w) = (X(s; t^{i-2} -\tb^{i-2} - \tb^{i-1}, x^{i}, v^{i}), V(s; t_{i-2} -\tb^{i-2} - \tb^{i-1}, x^{i}, v^{i})) \in \O \times\R^3, 
\ee
with $
|n(x^i) \cdot v^i| \dd S_{x^i} \dd \tb^{i-1} \dd v^i \lesssim \dd y \dd w$.
\end{proof}

Next, we control \eqref{expand_G4} by establishing the following estimate:

\begin{lemma} \label{lem:small_largek}
Consider $(X,V)$ solving \eqref{characteristics}, there exists $\mathfrak{C}= \mathfrak{C}(\O)>0$ (see \eqref{choice:k} for the precise choice),
such that  
\Be \label{small_largek}
\text{if }  \  k \geq \mathfrak{C}t,  \text{ then } 
\sup_{(x,v) \in \bar{\O} \times \R^3}  \Big(\int_{\prod_{j=1}^{k -1} \mathcal{V}_j}   
    \mathbf{1}_{t^{k} (t,x,v,v^1,\cdots, v^{k-1}) \geq 0 } \ \dd \sigma_1 \cdots \dd \sigma_{k-1}\Big) \lesssim e^{-t},
\Ee  
where $\dd \sigma_j = \mu (x^{j+1}, v^{j}) \{ n(x^j) \cdot v^j \} \dd v^j$ in \eqref{def:sigma measure}.
\end{lemma}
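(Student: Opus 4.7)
The plan is to exploit the fact that a short bounce time $\tb^j \le \tau_0$ has small mass under the measure $d\sigma_j$, and then combine this with a pigeonhole argument: squeezing $k-1$ bounces into time $t$ with $k-1 \gg t/\tau_0$ forces most $\tb^j$ to be short, producing an exponentially small factor.

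\emph{Step 1 (short bounces are rare).} Fix $\tau_0 \in (0, 1)$ to be chosen later. By \eqref{small tb estimate} together with the monotonicity of $\tb$ in $|v_3|$ visible from \eqref{tb first expression}, the condition $\tb(x^j, v^j) \leq \tau_0$ forces $|v^j_3| \leq C_1 \tau_0$ uniformly in $x^j \in \p\O$. Since the wall Maxwellian is $x$-independent and $|n(x^j) \cdot v^j| = |v^j_3|$, one gets
\[
\alpha(\tau_0) := \sup_{x^j\in\p\O}\int_{\mathcal{V}_j}\mathbf{1}_{\tb^j\leq \tau_0}\, d\sigma_j \;\lesssim\; \int_{\R^2}\!\frac{e^{-(v_1^2+v_2^2)/2}}{2\pi}\, dv_1\, dv_2 \int_0^{C_1\tau_0}\!e^{-v_3^2/2}\, v_3\,dv_3 \;\leq\; C_2\,\tau_0^2.
\]
Moreover, the wall Maxwellian normalization gives $\int_{\mathcal{V}_j} d\sigma_j = 1$ uniformly in $x^j$, so conditional on any prior history $x^1,\dots,x^j$, the $v^j$-integral of any indicator is bounded by $\alpha(\tau_0)$ when one imposes $\tb^j\leq \tau_0$ and by $1$ otherwise.

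\emph{Step 2 (pigeonhole and iterated integration).} From \eqref{def:t_k}, $t^k \geq 0$ is equivalent to $\tb(x, v) + \sum_{j=1}^{k-1}\tb^j \leq t$, hence implies $\sum_{j=1}^{k-1}\tb^j \leq t$. If $k - 1 \geq 2t/\tau_0$, then the set $T := \{j : \tb^j \leq \tau_0\}$ has $|T| \geq (k-1)/2$, for otherwise the complementary indices would contribute more than $\tau_0 (k-1)/2 \geq t$ to the sum. With $m := \lceil (k-1)/2 \rceil$,
\[
\mathbf{1}_{t^k\geq 0} \;\leq\; \sum_{\substack{T_0\subset \{1,\dots,k-1\}\\|T_0|=m}}\,\prod_{j\in T_0}\mathbf{1}_{\tb^j \leq \tau_0}.
\]
Integrating iteratively from $j = k - 1$ down to $j = 1$ using the uniform bounds from Step 1, I obtain
\[
\int_{\prod_{j=1}^{k-1}\mathcal{V}_j}\mathbf{1}_{t^k\geq 0}\, d\sigma_1\cdots d\sigma_{k-1} \;\leq\; \binom{k-1}{m}\,\alpha(\tau_0)^m \;\leq\; 2^{k-1}\,\alpha(\tau_0)^{(k-1)/2} \;=\; \big(2\sqrt{\alpha(\tau_0)}\,\big)^{k-1}.
\]

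\emph{Step 3 (choice of constants).} Fix $\tau_0$ so small that $\rho := 2\sqrt{\alpha(\tau_0)} \in (0, 1)$, and set $\mathfrak{C} := \max\!\big(2/\tau_0,\, 2/\log(1/\rho)\big) + 1$. Then $k \geq \mathfrak{C}\, t$ simultaneously provides the pigeonhole threshold $k - 1 \geq 2t/\tau_0$ and the exponential bound $\rho^{k-1} \leq e^{-t}$, uniformly in $(x, v) \in \bar\O \times \R^3$, which proves \eqref{small_largek}. The delicate point is that the $v^j$-integrals do not literally factorize, since $\mathcal{V}_{j+1}$ depends on $(x^j, v^j)$ through $x^{j+1}$; this is handled precisely by the uniformity in $x^j$ of the two bounds from Step 1, which lets the iteration close cleanly.
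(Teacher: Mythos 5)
Your proof is correct, and it takes a genuinely different combinatorial route from the paper's. Both arguments rest on the same core observation, namely that short bounces carry small mass under $\dd\sigma_j$ (your $\alpha(\tau_0)\lesssim\tau_0^2$ is exactly the paper's $\int_{\mathcal{V}^\delta_j}\dd\sigma_j\lesssim\delta^2$ from \eqref{estimate on delta}, both derived from $\tb\gtrsim |v_3|$ near grazing). Where you diverge is the bookkeeping. The paper counts the number $m$ of \emph{long} bounces, observes that $t^k\geq 0$ forces $m\leq [t/(C_\Omega\delta)]+1$, bounds the integral by $\sum_{m}\binom{k}{m}(C\delta^2)^{k-m}$, and then must estimate the binomial sum carefully via Stirling's formula, tying $k$ to $\mathfrak{a}([t/(C_\Omega\delta)]+1)$ with $\mathfrak{a}$ chosen so that $(\delta^{2\mathfrak{a}}e\mathfrak{a})^{1/(C_\O\delta)}\leq e^{-2}$. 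You instead use a pigeonhole argument to force at least $\lceil(k-1)/2\rceil$ of the bounces to be short, apply a union bound over subsets of exactly that size, and accept the crude bound $\binom{k-1}{m}\leq 2^{k-1}$; this suffices because $\alpha(\tau_0)$ can be made as small as one likes, so $2\sqrt{\alpha(\tau_0)}<1$. Your version avoids the Stirling computation entirely, and the choice of $\mathfrak{C}$ is correspondingly more transparent, at the cost of a slightly more wasteful binomial bound (which is harmless here). One point you handled correctly and which is worth having spelled out: the wall Maxwellian $\mu$ is $x$-independent, so both $\int_{\mathcal{V}_j}\dd\sigma_j=1$ and $\alpha(\tau_0)$ are uniform in $x^j$, which is precisely what lets the iterated integration over $j=k-1,\dots,1$ close without any coupling between layers.
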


\begin{proof}
 
From \eqref{estimate on delta}, we have 
\Be \notag
\int_{n(x) \cdot v > 0}
\mathbf{1} _{\delta> \tb (x, v)} \mu (\xb, v) |n(x) \cdot v|\dd v
\lesssim C \delta^2.
\Ee
Thus we define $\mathcal{V}^{\delta}_i := \{ v^{i} \in \mathcal{V}_i:  {| n(x^{i}) \cdot v^{i} |} < \delta \}$, and derive that
\Be \notag
\int_{\mathcal{V}^{\delta}_j} \dd \sigma_j \leq C \delta^2.
\Ee

On the other hand, since $t_{\mathbf{b}} (x^{i}, v^{i}) \gtrsim {| n(x^{i}) \cdot v^{i} |}$, we derive that for  $v^{i} \in \mathcal{V}_i \backslash \mathcal{V}^{\delta}_i$,
\Be \notag
t_{\mathbf{b}} (x^{i}, v^{i}) \geq C_{\Omega} \delta.
\Ee
If $t_{k}(t,x,v^1,\cdots, v^{k-1}) \geq 0$, we conclude such $v^{i} \in \mathcal{V}_i \backslash \mathcal{V}^{\delta}_i$ can exist at most $[\frac{t  }{C_{\Omega} \delta}] + 1$ times. Denote the combination 
$\begin{pmatrix}
M \\ N
\end{pmatrix}= \frac{M(M-1) \cdots (M-N+1)}{N(N-1) \cdots 1}= \frac{M!}{N! (M-N)!}$ for $M,N \in \mathbb{N}$ and $M\geq N$. 
From $0<\delta \ll 1$, we have
\Be \label{sum_bound}
\begin{split} 
& \ \ \ \ \int_{\prod_{j=1}^{k-1} \mathcal{V}_j}   \mathbf{1}_{t_{k} (t,x,v^1,\cdots, v^{k-1}) \geq 0} \ \dd \sigma_{k-1} \cdots \dd \sigma_1
\\& \leq \sum\limits^{[\frac{t}{C_{\Omega} \delta}] + 1}_{m=0} 
\begin{pmatrix}
k
 \\
m
\end{pmatrix} 
\big( \int_{\mathcal{V}_i^\delta} \dd \sigma_i \big)^{k-m}
\leq (C\delta^2)^{k - [\frac{t  }{C_{\Omega} \delta}]}
  \underbrace{ \sum\limits^{[\frac{t  }{C_{\Omega} \delta}] + 1}_{m=0} 
\begin{pmatrix}
k
 \\
m
\end{pmatrix}}_{\eqref{sum_bound}_*}.
\end{split}
\Ee

Recall the Stirling's formula,
\Be \label{Stirling}
\sqrt{2 \pi} k^{k+\frac{1}{2}} e^{-k} \leq k ! \leq k^{k+\frac{1}{2}} e^{-k+1}.
\Ee
Using $(1 + \frac{1}{\mathfrak{a}-1})^{\mathfrak{a}-1} \leq e$ and \eqref{Stirling}, we have for $\mathfrak{a} \geq 2$,
\Be \notag
\begin{split}
\begin{pmatrix}
k \\
\frac{k}{\mathfrak{a}}
\end{pmatrix} 
  = \frac{k !}{ (k - \frac{k}{\mathfrak{a}}) ! \frac{k}{ \mathfrak{a}} !} 
& \leq 
 (\frac{\mathfrak{a}}{\mathfrak{a}-1})^{\frac{\mathfrak{a}}{\mathfrak{a}-1} k} \mathfrak{a}^{\frac{k}{\mathfrak{a}}} \sqrt{\frac{\mathfrak{a}^2}{k (\mathfrak{a}-1)}} 
\\&  = 
 \frac{1}{\sqrt{k}} \bigg( \mathfrak{a}^{\frac{1}{\mathfrak{a}}} \big( \frac{\mathfrak{a}}{\mathfrak{a}-1} \big)^{\frac{\mathfrak{a}}{\mathfrak{a}-1}} \bigg)^k \sqrt{\frac{ \mathfrak{a}^2}{\mathfrak{a}-1}}
  \leq
  \frac{1}{\sqrt{k}} (e \mathfrak{a})^{\frac{k}{\mathfrak{a}}} \sqrt{\frac{\mathfrak{a}^2}{ \mathfrak{a}-1}}.
\end{split}
\Ee
Hence, we derive that
\Be \label{est:sum_com}
\begin{split}
\sum\limits^{[\frac{k}{\mathfrak{a}}]}_{i=1} 
\begin{pmatrix}
k \\
i
\end{pmatrix} \leq 
\frac{k}{\mathfrak{a}} 
\begin{pmatrix}
k \\
\frac{k}{\mathfrak{a}}
\end{pmatrix} \leq \frac{e}{2 \pi} \sqrt{\frac{k}{\mathfrak{a}}} (e \mathfrak{a})^{\frac{k}{\mathfrak{a}}}.
\end{split}
\Ee

Now we estimate $\eqref{sum_bound}_*$. 
For fixed $0< \delta \ll 1$ which is independent of $t$, we choose  
\Be \label{choice:k}
\mathfrak{a} \in \mathbb{N} \ \text{ such that }
(\delta^{2 \mathfrak{a}} e \mathfrak{a})^{\frac{1}{C_\O \delta}} \leq e^{-2}, \ \text{ and set } 
k :=  \mathfrak{a} \Big(\big[\frac{t  }{C_{\Omega} \delta}\big] + 1\Big).
\Ee 
Using \eqref{est:sum_com}, we have
\Be \notag
\eqref{sum_bound}_* \lesssim   \sqrt{\big[\frac{t  }{C_{\Omega} \delta}\big] + 1} \Big( e \frac{k}{[\frac{t  }{C_{\Omega} \delta}] + 1} \Big)^{[\frac{t  }{C_{\Omega} \delta}] + 1} 
\lesssim \sqrt{\big[\frac{t  }{C_{\Omega} \delta}\big] + 1} (e \mathfrak{a})^{[\frac{t  }{C_{\Omega} \delta}] +1 }.
\Ee
Hence, we bound \eqref{sum_bound} by
\Be \notag
(\delta^{2\mathfrak{a}} e \mathfrak{a} )^{[\frac{t  }{C_{\Omega} \delta}] + 1} \sqrt{\big[\frac{t  }{C_{\Omega} \delta}\big] + 1} \lesssim e^{-t}.
\Ee
\end{proof}

\subsection{Estimates on Exponential Moments}
Now we are ready to prove Theorem \ref{theorem}.
First, we set 
\be
w (x, v) := e^{\theta (|v|^2+ 2\Phi (x))} \ \text{ and } \  w^\prime (x, v) := e^{\theta^\prime (|v|^2+ 2\Phi (x))},
\ee
where $0 \leq 2 \theta < \theta^\prime = \frac{1}{2}$.
{\color{black}
Suppose $(X,V)$ solves \eqref{characteristics}.
From \eqref{eq:energy conservation}, we have 
\[
\frac{\dd}{\dd s} \big( |V(s;t,x,v)|^2 / 2 + \Phi(X(s;t,x,v)) \big) = 0.
\]
This indicates that both $w (x, v)$ and $w^\prime (x, v)$ are constant along the the characteristic \eqref{characteristics}.
}

\begin{proof}[\textbf{Proof of Theorem \ref{theorem}}]

We start to prove \eqref{theorem_infty_1}, and pick $\varrho (t) = t + 1$ to utilize the $L^1$-decay of Theorem \ref{theorem_1}. Then we work on the stochastic cycle representation of $\varrho (t) w^\prime (x, v) f(t, x, v)$ in \eqref{expand_k1}-\eqref{expand_k4}.

For the contribution of \eqref{expand_k1}, since $t^1 < 0$, and $w^\prime, f$ are constant along the characteristic trajectory. Thus we deduce that
\be \label{est:expand_k1}
\begin{split}
w^\prime (x, v) f(t, x, v) 
= w^\prime (X(0; t, x, v), V(0; t, x, v)) f(0, X(0; t, x, v), V(0; t, x, v))
\leq \| w^\prime f(0) \|_{L^\infty_{x,v}}.
\end{split}
\ee

Now we bound the contribution of \eqref{expand_k2}. Since
$| n(x) \cdot v | \lesssim w^\prime (x, v) = \mu^{-1} (x, v)$, we derive
\Be \label{est:expand_k2}
\begin{split}
\frac{1}{\varrho (t)} | \eqref{expand_k2} |
& \lesssim \frac{k}{\varrho (t)}  \bigg( \sup_{i }    \int_{\prod_{j=1}^{k} \mathcal{V}_j}   
     \mathbf{1}_{t^{i+1} < 0 \leq t^{i }} 
      \dd \tilde{\Sigma}_{i}\bigg)
      \varrho(0)
   \| w^\prime f(0) \|_{L^\infty_{x,v}}   
\\&  \lesssim  \frac{k}{\varrho (t)}  \bigg(   \int_{n(x^{j}) \cdot v^{j} >0}
        \frac{ |n(x^{j}) \cdot v^{j}| }{w^\prime (x^j, v^{j})}\dd v^j \bigg)   \| w^\prime f(0) \|_{L^\infty_{x,v}}
\\& \lesssim  \frac{k}{\varrho (t)}  \| w^\prime f(0) \|_{L^\infty_{x,v}}.
\end{split}
\Ee

Applying Lemma \ref{lem:bound1} and Theorem \ref{theorem_1}, we bound the contribution of \eqref{expand_k3}. Since
$| n(x) \cdot v | \lesssim w^\prime (x, v) = \mu ^{-1} (x, v)$ and $\varrho^\prime = 1$, we have
\be \label{est:expand_k3}
\begin{split}
\frac{1}{\varrho (t)} |\eqref{expand_k3}|     
& \lesssim \frac{k}{\varrho (t)} \sup_{i }  \int_{\prod_{j=1}^{i} \mathcal{V}_j}  \mathbf{1}_{0 \leq t^{i}}
\int^{t^{i}}_{ \max(0, t^{i+1})} 
w^\prime (X(s; t^i, x^i, v^i), V(s; t^i, x^i, v^i))
\\& \hspace{7cm} \times f(s, X(s; t^i, x^i, v^i), V(s; t^i, x^i, v^i)) \dd s \dd \tilde{\Sigma}_{i}    
\\& \lesssim \frac{k}{\varrho (t)} \int^t_0 \| f(s) \|_{L^1_{x,v}} \dd s   
\lesssim \frac{k}{\varrho (t)} \times \| w^\prime f (0) \|_{L^\infty_{x,v}}.
\end{split}
\ee

Lastly we bound the contribution of \eqref{expand_k4}. From Lemma \ref{lem:small_largek}, we get
\Be \label{est:expand_k4}
\begin{split}
 \frac{1}{\varrho (t)} |\eqref{expand_k4}| 
& \lesssim \frac{\varrho (t^k)}{\varrho (t)} \sup_{(x,v) \in \bar{\O} \times \R^3}  \Big(\int_{\prod_{j=1}^{k -1} \mathcal{V}_j}   
    \mathbf{1}_{t_{k }(t,x,v,v^1,\cdots, v^{k-1}) \geq 0 }
\dd \sigma_1 \cdots \dd \sigma_{k-1}\Big)
 \| w^\prime f(t_k) \|_{L^\infty_{x,v}}
\\&  \lesssim e^{-t} \sup_{t \geq s \geq 0} \| w^\prime f(s) \|_{L^\infty_{x,v}}.
\end{split}
\Ee

Collecting estimates from \eqref{est:expand_k1}-\eqref{est:expand_k4} and using $k \lesssim t$, we derive 
\Be \label{k estimate}
(1 -  e^{-t}) \sup_{t \geq 0} \| w^\prime f(t) \|_{L^\infty_{x,v}}
\lesssim 
(1 + \frac{k}{\varrho (t)}) \times
\| w^\prime f (0) \|_{L^\infty_{x,v}}. 
\Ee
Therefore, we prove \eqref{theorem_infty_1}.

\medskip

Next, we prove \eqref{theorem_infty}. To show the decay of exponential moments and again utilize the $L^1$-decay, we set a new weight function
\Be \label{varrho}
\varrho(t):=  
(\ln\langle t\rangle )^{6 - \A} \langle t\rangle^{\A - 5}.
\Ee
Clearly we have 
$\varrho^\prime(t) \lesssim  (\ln\langle t\rangle )^{6 - \A} \langle t\rangle^{\A - 6}$ for $t\gg1$.

\medskip

\textbf{Step 1.} 
From Lemma \ref{sto_cycle}, we derive the form of $\int_{\R^3} w(x, v) |f (t, x, v)| \dd v$. First we split $|v| \geq t/2$ and $t_1 \leq 3t/4$ case to get \eqref{v<t/2 term in wf} and \eqref{first team in f_exp}. Next, for $t_1 \geq 3t/4$ case, we follow along the stochastic cycles twice with $k=2$, $t_* = t/2$, and obtain \eqref{second team in f_exp} and \eqref{f_exp2}.
\begin{align}
\int_{\R^3} w(x, v) |f(t,x,v)| \dd v 
& \leq 
\int_{|v| \geq t/2} w(x, v) |f(t,x,v)| \dd v
\label{v<t/2 term in wf}
\\& + \int_{|v| \leq t/2} \mathbf{1}_{t^1 \leq 3t/4}  w(x, v) |f(3t/4, X(3t/4; t, x, v), V(3t/4; t, x, v))| \dd v
\label{first team in f_exp}
\\& + \int_{\R^3}  \mathbf{1}_{t^1 \geq 3t/4} w (x, v) \mu (x^1, \vb)  \int_{\prod^2_{j=1} \mathcal{V}_j} \mathbf{1}_{t^2 < t/2 < t^1} w (x^1, v^1) |f(t^1, x^1, v^1)| \dd \Sigma_{1}^2 \dd v 
\label{second team in f_exp} 
\\& + \int_{\R^3} \mathbf{1}_{t^1 \geq 3t/4}  
   w(x, v) \mu (x^1, \vb) 
  \Big| \int_{\prod^2_{j=1} \mathcal{V}_j} \mathbf{1}_{t^2 \geq t/2} w (x^2, v^2)
f(t^2, x^2, v^2) \dd \Sigma_{2}^2 \Big| \dd v, \label{f_exp2}
\end{align} 
where $\dd  {\Sigma}^{2}_{1} = \dd \sigma_{2}  \frac{ \dd \sigma_{1}}{ \mu (x^2, v^1)w(x^1, v^1)} $ and $\dd  {\Sigma}^{2}_{2} = \frac{ \dd \sigma_{2}}{ \mu (x^3, v^2) w(x^2, v^2)} \dd \sigma_{1}$.

For \eqref{v<t/2 term in wf},
from the $L^\infty$-boundedness, $\Phi(x) |_{x \in \bar{\O}} \geq 0$, and $0 < w < w^\prime$, we derive that
\Be \label{v<t/2 part in wf}
\begin{split}
\int_{|v| \geq t/2} w(x, v) |f(t,x,v)| \dd v
& \leq \int_{|v| \geq t/2} \frac{w (x, v)}{w^\prime (x, v)} \dd v \| w^\prime f(0) \|_{L^\infty_{x,v}}
\\& \leq \int_{|v| \geq t/2} e^{- (\theta^\prime - \theta) |v|^2 } \dd v \| w^\prime f(0) \|_{L^\infty_{x,v}}
\\& \lesssim \frac{1}{\sqrt{\theta^\prime - \theta}} e^{- \frac{(\theta^\prime - \theta) t^2}{4}} \| w^\prime f(0) \|_{L^\infty_{x,v}}.
\end{split}
\Ee

For \eqref{first team in f_exp}, from $t^1 \leq 3t/4$ and Lemma \ref{conservative field}, we have for $t^1 \leq s \leq t$,
\Be \label{vvb relation}
\tb (x, v) = t - t^1 \geq t/4, \ \
\frac{|V(s;t,x,v)|^2}{2} + \Phi(X(s;t,x,v)) 
= \frac{|\vb|^2}{2}.
\Ee
On the other hand, using \eqref{tb estimate}, we get
\Be \label{tbvb relation}
\tb (x, v) \lesssim a^{\frac{1}{2} \vbn^2 (x, v)}.
\Ee
Then, from the $L^\infty$-boundedness, \eqref{vvb relation} and \eqref{tbvb relation}, we deduce that 
\Be \label{first part in wf}
\begin{split}
\eqref{first team in f_exp} 
& \lesssim \int_{|v| \leq t/2} \frac{w (x, v)}{w^\prime (X(3t/4; t, x, v), V(3t/4; t, x, v))} \dd v \| w^\prime f(0) \|_{L^\infty_{x,v}}
\\& \leq \int_{|v| \leq t/2} 
e^{(\theta - \theta^\prime) |\vb (x, v)|^2} \dd v \| w^\prime f(0) \|_{L^\infty_{x,v}}
\\& \leq \int_{|v| \leq t/2} 
e^{- \frac{1}{4} |\vb (x, v)|^2} \dd v \| w^\prime f(0) \|_{L^\infty_{x,v}}
\\& \leq \int_{|v_3| \leq t/2} 
|\tb (x, v) |^{- \frac{\A}{2}}  \dd v_3 \| w^\prime f(0) \|_{L^\infty_{x,v}}
\lesssim \langle t\rangle^{1 - \frac{\A}{2} } \| w^\prime f(0) \|_{L^\infty_{x,v}}.
\end{split}
\Ee

Next, we bound $\int_{\R^3} w(x, v) \mu (x^1, \vb) \dd v$ shown in \eqref{second team in f_exp} and \eqref{f_exp2}.
Note that from \eqref{vvb relation}, we have 
\be \notag
\mu^{-1} (x^1, \vb) = w^\prime (x, v).
\ee 
Thus, we derive
\Be \label{vvb estimate}
\int_{\R^3} w(x, v) \mu (x^1, \vb) \dd v 
= \int_{\R^3} \frac{w (x, v)}{w^\prime (x, v)} \dd v \lesssim_{\theta} 1.
\Ee

For \eqref{second team in f_exp}, since $\int_{\mathcal{V}_2} \dd \sigma_2$ is bounded and from \eqref{vvb estimate}, we have 
\Be \label{first step in second part in wf}
\eqref{second team in f_exp} 
\lesssim \int_{\mathcal{V}_1} \mathbf{1}_{t^2 < t/2 < t^1} |f(t^1, x^1, v^1)| \{ n(x^1) \cdot v^1 \} \dd v^1.
\Ee
From $t^1 \geq 3t/4$, $t^2 < t/2$ and \eqref{tb estimate}, we have 
\be
\tb (x^1, v^1) = t^1 - t^2 \geq t/4, \ \ 
a^{\frac{1}{2} (v^1_3)^2 } \gtrsim \tb (x^1, v^1).
\ee 
Then, from the $L^\infty$-boundedness and $0 < n(x^1) \cdot v^1 \lesssim e^{\e |v^1|^2} < w^\prime (x^1, v^1)$ for $0 < \e \ll 1/2$,
we derive
\Be \label{second part in wf}
\begin{split}
\eqref{first step in second part in wf} 
& \lesssim \int_{\mathcal{V}_1} \frac{ n(x^1) \cdot v^1 }{w^\prime (x^1, v^1)} \dd v^1 \| w^\prime f(0) \|_{L^\infty_{x,v}}
%\\& \lesssim \int_{\mathcal{V}_1} 
%e^{(\e - \theta^\prime) |v^1|^2} \dd v^1 \| w^\prime f(0) \|_{L^\infty_{x,v}}
\\& \lesssim \int_{v^1_3 \leq 0} 
e^{(\e - \theta^\prime) |v^1_3|^2} \dd v^1_3 \| w^\prime f(0) \|_{L^\infty_{x,v}}
\\& \lesssim \int_{v^1_3 \leq 0} 
e^{- \frac{\theta^\prime}{2} |v^1_3|^2}
e^{(\e - \frac{\theta^\prime}{2}) |v^1_3|^2} \dd v^1_3 \| w^\prime f(0) \|_{L^\infty_{x,v}}
\\& \lesssim \int_{v^1_3 \leq 0} 
\big( \tb (x^1, v^1) \big)^{- \frac{\A}{2}}
e^{(\e - \frac{\theta^\prime}{2}) |v^1_3|^2} \dd v^1_3 \| w^\prime f(0) \|_{L^\infty_{x,v}}
\\& \lesssim \langle t\rangle^{- \frac{\A}{2}} \int_{v^1_3 \leq 0} e^{(\e - \frac{\theta^\prime}{2}) |v^1_3|^2} \dd v^1_3 \| w^\prime f(0) \|_{L^\infty_{x,v}}
\lesssim \langle t\rangle^{- \frac{\A}{2}} \| w^\prime f(0) \|_{L^\infty_{x,v}}.
\end{split}
\Ee

\medskip

\textbf{Step 2.}
Now we only need to bound \eqref{f_exp2}. Since $\int_{\R^3} w(x, v) \mu (x^1, \vb) \dd v \lesssim_{\theta} 1$, and $\int_{\mathcal{V}_1} \dd \sigma_1$ is bounded, it suffices to prove the decay of
\Be \label{decay of f_exp2}
\sup_{v \in \R^3, v^1 \in \mathcal{V}_1}
\Big| \int_{\mathcal{V}_2} 
\mathbf{1}_{t^2 \geq t/2} f(t^2,x^2,v^2) \{ n(x^{2}) \cdot v^{2} \} \dd v^{2} \Big|. 
\Ee 

Here we define $g (t, x, v) := \varrho (t) w (x, v) f (t, x, v)$, and note that
\Be \notag
\frac{1}{\varrho (t^2)} \int_{\mathcal{V}_2} \frac{|n(x^2) \cdot v^2|}{w (x^2, v^2)} g (t^2, x^2, v^2) \dd v^2 = \int_{\mathcal{V}_2} f(t^2,x^2,v^2) \{ n(x^2) \cdot v^2 \} \dd v^2.
\Ee 
Therefore, it suffices to show the decay of $\big| \frac{1}{\varrho (t^2)} \int_{\mathcal{V}_2} \mathbf{1}_{t^2 \geq t/2} \frac{|n(x^2) \cdot v^2|}{w (x^2, v^2)} g (t^2, x^2, v^2) \dd v^2 \big|$.

Applying Lemma \ref{sto_cycle_2} with $w(x, v) = e^{\theta (|v|^2+ 2\Phi (x))}$ and $\varrho(t)$ in \eqref{varrho}, and choosing $k \geq \mathfrak{C} t$ as in Lemma \ref{lem:small_largek}, we obtain the following stochastic cycle representation of 
$g (t^2, x^2, v^2) = \varrho (t^2) w (x^2, v^2) f (t^2, x^2, v^2)$:
\begin{align}
g (t^2, x^2, v^2) 
= \mathbf{1}_{t^3 < 0} \varrho & (0) w(x^2, v^2) f (0, X(0; t^2, x^2, v^2), V(0; t^2, x^2, v^2))
\label{expand_g1}
    \\& + w(x^2, v^2) \int^{t^2}_{\max(0, t^3)} \varrho^\prime(s) f(s, X(s; t^2, x^2, v^2), V(s; t^2, x^2, v^2)) \dd s \label{expand_g2}
    \\& +  w \mu (x^3, v^2_{\mathbf{b}}) \sum\limits^{k-1}_{i=3}  \int_{\prod_{j=3}^{i} \mathcal{V}_j} \Big\{ \mathbf{1}_{t^{i+1} < 0 \leq t^{i}}  \varrho (0) w(x^i, v^{i}) \notag
     \\& \hspace{5cm} \times f (0, X(0; t^i, x^i, v^i), V(0; t^i, x^i, v^i)) \Big\}
      \dd \tilde{\Sigma}_{i}
\label{expand_g3}
    \\& + w \mu (x^3, v^2_{\mathbf{b}}) \sum\limits^{k-1}_{i=3} \int_{\prod_{j=3}^{i} \mathcal{V}_j} \mathbf{1}_{0 \leq t^{i}}
     \Big\{ \int^{t^{i}}_{ \max(0, t^{i+1})} \varrho^\prime(s) w (x^i, v^{i}) 
     \notag
     \\& \hspace{5cm} \times f(s, X(s; t^i, x^i, v^i), V(s; t^i, x^i, v^i)) \dd s 
      \Big\} \dd \tilde{\Sigma}_{i}
\label{expand_g4}
    \\& + w \mu (x^3, v^2_{\mathbf{b}}) \int_{\prod_{j=3}^{k } \mathcal{V}_j}   
    \mathbf{1}_{t^{k} \geq 0} \
    g (t^{k}, x^{k}, v^{k})
     \dd \tilde{\Sigma}_{k}, \label{expand_g5} 
\end{align} 
where 
$\dd \tilde{\Sigma}_{i} 
:= \frac{ \dd \sigma_{i}}{\mu (x^{i+1}, v^{i}) w(x^i, v^{i})} \dd \sigma_{i-1} \cdots \dd \sigma_3$ with $3 \leq i \leq k$.
Here, we regard $t^2, x^2, v^2$ as free parameters and from Lemma \ref{conservative field}, we have 
$\mu (x^3, v^2_{\mathbf{b}}) = \mu (x^3, v^2)$.

\medskip

\textbf{Step 3.}
Next we estimate the contribution of \eqref{expand_g1}-\eqref{expand_g5} in 
$\frac{1}{\varrho (t^2)} \int_{\mathcal{V}_2} \frac{ |n(x^2) \cdot v^2| }{w (x^2, v^2)} g (t^2, x^2, v^2) \dd v^2$
term by term. 

We start with the contribution of \eqref{expand_g1}.
From $t^2 \geq t/2$ and $t^3 \leq 0$, we have 
\be \notag
\| w (x^2, v^2) f (t^2, x^2, v^2) \|_{L^\infty_{x,v}} \leq \| w (x, v) f(0, x, v) \|_{L^\infty_{x,v}}.
\ee 
From the $L^\infty$-boundedness and $0 < n(x^2) \cdot v^2 \lesssim w (x^2, v^2) < w^\prime (x^2, v^2)$,  
we deduce that 
\Be \label{est:expand_g1}
\begin{split}
\frac{1}{\varrho (t^2)} \int_{\mathcal{V}_2} \frac{|n(x^2) \cdot v^2|}{w (x^2, v^2)} |\eqref{expand_g1}| \dd v^2
& = \frac{1}{\varrho (t^2)} \int_{\mathcal{V}_2} |n(x^2) \cdot v^2| | \varrho (0) f (t^2, x^2, v^2) | \dd v^2 
\\& \lesssim \frac{1}{\varrho (t^2)} \int_{\mathcal{V}_2} \frac{|n(x^2) \cdot v^2|}{w (x^2, v^2)} \varrho(0) \dd v^2 \times \| w f(0) \|_{L^\infty_{x,v}}
\\& \lesssim \frac{1}{\varrho (t)} \varrho(0) \| w f(0) \|_{L^\infty_{x,v}}
\lesssim  \frac{1}{\varrho (t)} \| w^\prime f(0) \|_{L^\infty_{x,v}}.
\end{split}
\Ee

Now we bound the contribution of \eqref{expand_g2}. Recall Theorem \ref{theorem_1} with
$\varrho^\prime(t) \lesssim  (\ln\langle t\rangle )^{6 - \A} \langle t\rangle^{\A - 6}$ for $t\gg1$, and Lemma  \ref{lem:bound1}, we get 
\Be \label{est:expand_g2}
\begin{split}
\frac{1}{\varrho (t^2)} \int_{\mathcal{V}_2} \frac{|n(x^2) \cdot v^2|}{w (x^2, v^2)} |\eqref{expand_g2} | \dd v^2
& \lesssim \frac{1}{\varrho (t)} \int^t_0 \| \varrho^\prime (s) f(s) \|_{L^1_{x,v}} \dd s    
\\& \lesssim \frac{1}{\varrho (t)} \int^t_0 \| (\ln\langle s \rangle)^{6 - \A} \langle s \rangle^{\A - 6} f(s) \|_{L^1_{x,v}} \dd s
\\& \lesssim \frac{t}{\varrho (t)} \times \| w^\prime f (0) \|_{L^\infty_{x,v}}. 
\end{split}
\Ee

Next, we bound the contribution of \eqref{expand_g3}. 
From $t^{i+1} < 0 \leq t^{i }$, we have 
\be \notag
\| w (x^i, v^i) f (t^i, x^i, v^i) \|_{L^\infty_{x,v}} \leq \| w (x, v) f(0, x, v) \|_{L^\infty_{x,v}}.
\ee 
From the $L^\infty$-boundedness and 
$0 < n(x^2) \cdot v^2 \lesssim w (x^2, v^2) < w^\prime (x^2, v^2) < \mu^{-1} (x^3, v^2)$, we derive 
\Be \label{est:expand_g3}
\begin{split}
\frac{1}{\varrho (t^2)} \int_{\R^3} \frac{|n(x^2) \cdot v^2|}{w (x^2, v^2)} |\eqref{expand_g3}| \dd v^2
& \lesssim \frac{k}{\varrho (t)}  \bigg( \sup_{i }    \int_{\prod_{j=3}^{i} \mathcal{V}_j}   
     \mathbf{1}_{t^{i+1} < 0 \leq t^{i }} 
      \dd \tilde{\Sigma}_{i} \bigg)
      \varrho(0)
   \| w f(0) \|_{L^\infty_{x,v}}   
\\&  \lesssim  \frac{k}{\varrho (t)}  \bigg(   \int_{n(x^{i}) \cdot v^{i} >0}
        \frac{ |n(x^{i}) \cdot v^{i}| }{w (x^i, v^{i})}\dd v^i \bigg)   \| w f(0) \|_{L^\infty_{x,v}}
\\& \lesssim  \frac{k}{\varrho (t)}  \| w^\prime f(0) \|_{L^\infty_{x,v}}.
\end{split}
\Ee

Again using Lemma \ref{lem:bound1} and Theorem \ref{theorem_1}, we bound the contribution of \eqref{expand_g4}. From  $0 < n(x^2) \cdot v^2 \lesssim w (x^2, v^2) \leq \mu^{-1} (x^3, v^2)$ and $\varrho^\prime(t) \lesssim  (\ln\langle t\rangle )^{6 - \A} \langle t\rangle^{\A - 6}$ for $t \gg 1$, we have
\Be \label{est:expand_g4}
\begin{split}
& \ \ \ \ \frac{1}{\varrho (t^2)} \int_{\R^3} \frac{|n(x^2) \cdot v^2|}{w (x^2, v^2)} |\eqref{expand_g4}| \dd v^2   \\
& \lesssim \frac{k}{\varrho (t)} \times  \sup_{i }  \int_{\prod_{j=3}^{i} \mathcal{V}_j}  \mathbf{1}_{0 \leq t^{i}}
\int^{t^{i}}_{ \max(0, t^{i+1})}w (x^i, v^{i}) \varrho^\prime(s) f(s, X(s; t^i, x^i, v^i), V(s; t^i, x^i, v^i)) \dd s \dd \tilde{\Sigma}_{i}    
\\& \lesssim \frac{k}{\varrho (t)} \int^t_0 \| \varrho^\prime (s) f(s) \|_{L^1_{x,v}} \dd s     
\lesssim \frac{k}{\varrho (t)} \int^t_0 
\| (\ln\langle s \rangle)^{6 - \A} \langle s \rangle^{\A - 6} f(s) \|_{L^1_{x,v}} \dd s
\\& \lesssim \frac{k t}{\varrho (t)} \times \| w^\prime f (0) \|_{L^\infty_{x,v}}. 
\end{split}
\Ee

Lastly we bound the contribution of \eqref{expand_g5}.
Applying Lemma \ref{lem:small_largek} with $k \geq \mathfrak{C} t$, we get
\Be \label{est:expand_g5}
\begin{split}
& \ \ \ \ \frac{1}{\varrho (t^2)} \int_{\R^3} \frac{|n(x^2) \cdot v^2|}{w (x^2, v^2)} |\eqref{expand_g5}| \dd v^2\\
& \lesssim \frac{\varrho (t^k)}{\varrho (t^2)} \sup_{(x,v) \in \bar{\O} \times \R^3}  \Big(\int_{\prod_{j=3}^{k -1} \mathcal{V}_j}   
    \mathbf{1}_{t^{k }(t^2,x^2,v^2,\cdots, v^{k-1}) \geq 0 }
\dd \sigma_3 \cdots \dd \sigma_{k-1}\Big)
\sup_{t_k \geq 0} \| w f(t^k) \|_{L^\infty_{x,v}}
\\&  \lesssim e^{-t} \sup_{t_k \geq 0} \| w^\prime f(t^k) \|_{L^\infty_{x,v}}
 \lesssim e^{-t} \| w^\prime f(0) \|_{L^\infty_{x,v}}.
\end{split}
\Ee

Collecting estimates from \eqref{est:expand_g1}-\eqref{est:expand_g5} and using $k \lesssim t$, we derive 
\Be \label{g estimate}
\big| \frac{1}{\varrho (t^2)} \int_{\mathcal{V}_2} \mathbf{1}_{t^2 \geq t/2} \frac{|n(x^2) \cdot v^2|}{w (x^2, v^2)} g (t^2, x^2, v^2) \dd v^2 \big|
\leq 
\max \{ \frac{1}{\varrho (t)}, \frac{(k+1) t}{\varrho (t)}  , e^{-t} \}   
\lesssim \frac{\langle t\rangle^2}{\varrho (t)}.
\Ee
Using $\varrho(t) = (\ln\langle t\rangle )^{6 - \A} \langle t\rangle^{\A - 5}$, $0 < w (x, v) < \mu^{-1} (x, v)$ and \eqref{g estimate}, we conclude
\Be \notag
\eqref{f_exp2} 
\lesssim \frac{\langle t\rangle^2}{\varrho (t)}
\lesssim \langle t\rangle^{\A - 7}.
\Ee
The above estimate, together with \eqref{v<t/2 part in wf}, \eqref{first part in wf} and \eqref{second part in wf}, we prove \eqref{theorem_infty}.
\end{proof}

 \section*{Conflict of interest statement}On behalf of all authors, the corresponding author states that there is no conflict of interest.

 \section*{Acknowledgment}This project is partly supported by NSF-CAREER 2047681, Brain Pool fellowship, and Simons fellowship.

\end{document}